\documentclass{amsart}
\usepackage{graphicx} 
\usepackage{amsmath}  
\usepackage{amsthm}  
\usepackage{amsfonts}
\usepackage{enumerate}
\usepackage{placeins}   

\usepackage{xurl}   
\usepackage[colorlinks=true,linkcolor=blue,citecolor=blue,urlcolor=blue]{hyperref}   

\usepackage{tcolorbox}

\def\vector#1#2#3{\left(
\begin{array}{l} 
#1\\ 
#2\\ 
#3 
\end{array}\right)}

\def\matrix#1#2#3#4#5#6#7#8#9{{
 \left( \begin{array}{ccc}
#1 & #2 & #3 \\
#4 & #5 & #6 \\
#7 & #8 & #9 
\end{array} \right)}}

\theoremstyle{plain}
\newtheorem{thm}{Theorem}
\newtheorem{lem}[thm]{Lemma}
\newtheorem{prop}[thm]{Proposition}
\newtheorem{cor}[thm]{Corollary}

\theoremstyle{definition}

\newtheorem{exmp}{Example}[section]
\theoremstyle{remark} 
\newtheorem*{remark}{Remark}
\newcommand{\Q}{\mathbb{Q}} 
\newtheorem*{mainrestatement}{{\bf Theorem~\ref{thm:main}}}

\def\al{\alpha}
\def\be{\beta}
\def\ga{\gamma}
\def\Ga{\Gamma}
\def\qq{{\mathbb Q}}
\def\Z{{\mathbb Z}}
\def\ha{\frac{1}{2}}
\def\onehalf{\tfrac{1}{2}}
\def\cd{\cdot}
\def\Erdos{Erd\H{o}s}

\title{Solution of Erd\H{o}s Problem 633}
\author[Beeson]{Michael Beeson}
\address{San Jos\'e State University (emeritus), and UCSC (research associate)}

\author[Laczkovich]{Mikl\'os Laczkovich}
\address{E\"otv\"os Lor\'and University}

\author[Zhang]{Yan X Zhang}
\address{San Jos\'e State University}
\date{\today}
\subjclass[2020]{51M20, 51M04, 11D25}

\begin{document}

\begin{abstract}
It is well-known that any triangle can be \emph{tiled} (cut) into any square number of congruent triangles similar to the original triangle.
We classify all triangles that can be tiled into a non-square number of congruent triangles, settling \Erdos\ Problem 633. Our work shows that, 
aside from isosceles triangles,
this set of triangles is countable (up to similarity).
We also show that (with a few exceptions) if a triangle $T$ has any tiling into congruent triangles not similar to $T$, then every such tiling has a non-square number of tiles.
\end{abstract}
\thanks{The second author was supported by the Hungarian National Foundation
for Scientific Research, Grant No. K146922. The third author wishes to thank
Boris Alexeev for informing him of the existence of this problem.
We used Claude (Anthropic) and Opus (OpenAI) solely for proof-checking and copy editing. 
}
\maketitle

\section{Introduction}

We say that a triangle $T$ \emph{tiles} into a triangle $R$ (the \emph{tile}) if $T$ can be cut into a union of triangles congruent to $R$, overlapping only at their boundaries. Call such a tiling a \emph{square} tiling if the number of tiles is a square and a \emph{reptiling} if $T$ and $R$ are similar.  Every triangle has ``generic'' square reptilings into $N^2$ tiles for any $N$ (see Figure~\ref{figure:quadratic}), so the following (\$25) Erd\H{o}s Problem from \cite[p. 48]{Soifer2009-book} (indexed as Problem 633 in \cite{erdos-633}) can be seen as trying to understand which triangles $T$ can be ``non-generically'' tiled. 
\begin{tcolorbox}
Classify those triangles which can only be tiled by square tilings.
\end{tcolorbox}

Our main result, which settles the problem, is the following. 

\begin{thm} \label{thm:main}
A triangle $T$ admits a non-square tiling if and only if it satisfies one of
the following conditions, where $(A,B,C)$ are the angles of $T$ in some order: 
\begin{enumerate}[{\rm (1)}]
    \item $A=B$, i.e. $T$ is an isosceles triangle (including equilateral);
    \item $C =  \pi / 2$ and the legs of the right triangle $T$ are in integer
      ratio $M/K$, where $M^2 + K^2$ is not a square;
    \item $(A,B,C)  = (\pi/6, \pi/2, \pi/3)$;
    \item $C = \pi/3$, with $\sqrt{3}\tan(A/2)$ rational;
    \item $B = 2A$,  with $\sqrt{3}\tan(A/2)$ rational;
    \item $B = 2A$, with $\sin(A/2)$ rational;
    \item  $C = A/2 + B$, with $2 \sin(A/4)$
    rational, equal to $M/K$,\\ where $2K^2-M^2$ is not a square and $M,K \in \Z$; 
    \item $C = 2A+B/2$, with $\sqrt{3}\tan(A/2)$ rational.
\end{enumerate}
\end{thm}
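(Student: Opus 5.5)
The plan is to reduce the question to the known classification of triangle tilings by congruent tiles, due to Laczkovich and to Beeson. For which triangles $T$, tiles $R$ and numbers $N$ is there a tiling of $T$ by $N$ copies of $R$? We then read off, case by case, when $N$ can be a non-square.

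First split according to whether $R$ is similar to $T$. For reptilings, Snover et al.\ showed that $N$ must be a square unless $T$ is a right triangle whose legs are in rational ratio $M/K$. In that case $N = e^2(M^2+K^2)$ is attainable: cut $T$ by the altitude from the right angle and combine quadratic tilings of the two pieces. This $N$ is a non-square precisely when $M^2+K^2$ is not a square, which gives (2). Every isosceles triangle admits a non-square tiling, for example $2n^2$ tiles obtained by halving along the axis of symmetry and then tiling each half quadratically; this gives (1).

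For non-similar tilings, use the classification of the possible tile shapes. Laczkovich's theorem says the tile has angles $(\pi/6,\pi/3,\pi/2)$, or $T$ is equilateral, or the angles of $R$ and $T$ are related by one of finitely many linear relations. Beeson's results then determine the feasible shapes, namely those with $C=\pi/3$, with $B=2A$, with $C=A/2+B$ and with $C=2A+B/2$, and for each shape the admissible $N$. The rationality conditions in (4)--(8) come from matching edge lengths. The sides of $T$ must be integer combinations of the sides of $R$. By the law of sines, this forces the stated quantity ($\sqrt3\tan(A/2)$, $\sin(A/2)$, or $2\sin(A/4)$) to be rational. Conversely, for each such rational value one exhibits an explicit construction, for instance a "triquadratic" or "biquadratic" tiling, and computes its tile count.

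The key step is an area computation. Put $N = \operatorname{area}(T)/\operatorname{area}(R)$ and express it in terms of the parameters $M,K$. We must show that $N$ is never a square in cases (3)--(6) and (8), except for finitely many degenerate cases that are absorbed into (1)--(3); this is the content of the abstract's second sentence. A typical form is $N = 3m^2$-type expressions, whose $\sqrt3$ factor prevents squares. In case (7), $N$ is essentially $(2K^2-M^2)\cdot(\text{square})$, which explains the stated side condition.

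The main obstacle is the converse direction: proving that no other triangle admits a non-square tiling. This requires the completeness of the shape classification, including the exclusion of the remaining candidate linear relations and of tiles whose angles are rationally independent. It also requires showing that in each listed family every tiling, not just the constructed ones, has the forced non-square count. I would handle this with Laczkovich's coloring/valuation argument on vertex angles to pin down the angle relations, and then with the number-theoretic edge and area equations, showing that a square $N$ would force a Pell-type equation with only trivial solutions, which correspond to isosceles $T$.
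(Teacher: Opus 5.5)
Your skeleton matches the paper's: split off reptilings via Snover, use Laczkovich's classification to reduce the non-similar tilings to the shapes $C=\pi/3$, $B=2A$, $C=A/2+B$, $C=2A+B/2$, then compare areas. But the step you call the key step is misidentified, and as described it would not go through.

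Once the tile has commensurable sides, the factors $\sqrt3$ from $\sin(\pi/3)=\sin(2\pi/3)$ cancel in $\mathrm{area}(T)/\mathrm{area}(R)$. In cases (4), (5), (6), (8) there is no uniform factor of $3$ to exploit. Instead the paper finds that $N$ is a rational square times one of the following:
\begin{itemize}
\item case (4): $(a+b)/b$, subject to $a^2+ab+b^2=c^2$;
\item case (6): $(2-s^2)(3-s^2)$, with $s=2\sin(A/2)$;
\item case (5): $\tfrac23\cdot\frac{3t^2-1}{(3t+1)(t-1)}$, with $t=\tan(A/2)/\sqrt3$;
\item case (8): $\frac{3t^2-6t-1}{(t-1)(3t+1)}$, with the same $t$.
\end{itemize}
Showing these are never rational squares means showing that the genus-one curves $c^2=n^4-m^2n^2+m^4$, $y^2=x^3+10x^2+x$, $y^2=x(x^2+18x-27)$ and $y^2=x(x^2+6x-3)$ have no rational points beyond their torsion points. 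Those torsion points give degenerate or excluded parameter values ($b=0$, $x=0$, $x\in\{1,-3\}$, a contradiction like $6=25/4$), not isosceles triangles. This is a rank-zero computation (2-descent) plus Nagell--Lutz. A Pell-type reduction cannot deliver it, because a conic with one rational point has infinitely many. This number theory is the missing core of the right-to-left direction.

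Two other load-bearing steps are also missing.

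First, the rationality conditions in (4)--(8) do not follow from the fact that the sides of $T$ are integer combinations of the sides of $R$. Quadratic tilings show that this bookkeeping is compatible with completely arbitrary side lengths. What you need is Laczkovich's rationality theorem (Theorem~\ref{thm:rationality}: a non-right tile, not similar to $T$, with incommensurable angles has commensurable sides), which you must cite or reprove. Propositions~\ref{prop:group-1-alg} and~\ref{prop:group-2-alg} then convert commensurable sides into $\sin(\alpha/2)\in\mathbb{Q}$ or $\sqrt3\tan(\alpha/2)\in\mathbb{Q}$. To know the angles are incommensurable in the first place, you need Theorem~\ref{thm:l1995-5.3} together with Lemma~\ref{lem:RandT}.

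Second, existence in cases (4), (5), (6), (8) is not supplied by biquadratic or triquadratic tilings. The former are reptilings of right triangles, and the latter tile only the shape $(2\alpha,\beta,\alpha+\beta)$ of case (7). Existence comes from Laczkovich's Theorems 2.4 and 2.5. Before invoking them, you must dispose of the commensurable-angle instances of (4)--(8), which are vacuous or equilateral.

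Finally, your version of Snover's theorem omits the $N=3M^2$ reptilings of the $(\pi/6,\pi/3,\pi/2)$ triangle, whose legs are in ratio $\sqrt3$. Condition (3) comes from these reptilings, not from a non-similar tiling.
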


\begin{cor}
Apart from the isosceles triangles, the number of similarity classes of triangles having
a non-square tiling is countable. 
\end{cor}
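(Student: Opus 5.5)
The corollary follows directly from Theorem~\ref{thm:main}: I will check that every case other than (1) contributes only countably many similarity classes. A triangle's similarity class is fixed by its angle triple $(A,B,C)$ with $A+B+C=\pi$. So in each case it is enough to show that the admissible angle triples are indexed by a countable set, typically by one or two rational parameters.

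\textbf{Case (2).} The right triangle is determined up to similarity by the ratio of its legs. That ratio lies in $\Q$, which is countable.

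\textbf{Case (3).} This is a single similarity class.

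\textbf{Cases (4), (5), (8).} Put $t=\sqrt{3}\tan(A/2)\in\Q$. Since $0<A<\pi$, we have $A/2\in(0,\pi/2)$, and $\tan$ is injective there. Hence $A$ is uniquely determined by $t$, namely $A=2\arctan(t/\sqrt3)$.
\begin{itemize}
\item In case (4), $C=\pi/3$ is fixed, so $B=\pi-A-C$ is determined.
\item In case (5), $B=2A$, and then $C$ is determined.
\item In case (8), the relations $C=2A+B/2$ and $A+B+C=\pi$ are linear in $B$ and $C$ once $A$ is known. Solving them gives $B=\tfrac{2}{3}(\pi-3A)$, and $C$ follows.
\end{itemize}
In each of these cases, then, the triple is a function of $t\in\Q$.

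\textbf{Case (6).} Here $\sin(A/2)\in\Q$ with $A/2\in(0,\pi/2)$, so $A$ is determined by a rational number. Then $B=2A$ determines $C$.

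\textbf{Case (7).} Here $2\sin(A/4)=M/K\in\Q$ with $A/4\in(0,\pi/4)$, so again $A$ is determined by a rational number. The relation $C=A/2+B$ together with $A+B+C=\pi$ gives $B=(\pi-3A/2)/2$, and then $C$ follows.

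The labeling ``in some order'' of the angles only multiplies each count by at most $3!$, so countability is preserved. A countable union of countable sets is countable, which proves the corollary.

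The only real point to verify is the injectivity of the rational parameter $\mapsto A$ map in each case. This holds because the relevant half- or quarter-angles lie in intervals where $\tan$ or $\sin$ is strictly monotone.

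Note that case (1), the isosceles triangles, is genuinely uncountable, since the apex angle can be any value in $(0,\pi)$. This is why it is excluded in the statement.
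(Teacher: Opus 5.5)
Your proposal is correct and is essentially the paper's argument: the paper gives no separate proof and treats the corollary as immediate from Theorem~\ref{thm:main}, since in each of conditions (2)--(8) a rational parameter together with a linear relation among the angles determines the similarity class. One small wording point: your closing sentence speaks of injectivity of the map from the rational parameter to $A$, but what you actually use, correctly, earlier in the proposal, is that $A$ is determined by the parameter, which your monotonicity remark supplies.
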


We also prove that for most triangles every square tiling must be a reptiling.

\begin{thm}\label{thm:non-reptiling}
Suppose we have a tiling of triangle $T$ that is not a reptiling. Then the
number of tiles cannot be a square unless one of the following is true:
\begin{enumerate}[{\rm (1)}]
\item $T$ is isosceles; 
\item $T$ has angles $(A,B,C)$ with $C = A/2 + B$ and $2 \sin(A/4)$ rational,
equal to $M/K$, where
$2K^2-M^2$ is a square.
\end{enumerate}
\end{thm}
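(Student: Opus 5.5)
We will derive Theorem~\ref{thm:non-reptiling} from the same classification that underlies Theorem~\ref{thm:main}. That classification describes, for a given tile, exactly which triangles it can tile and with which numbers of tiles. The work therefore splits into two parts: running through the known list of non-reptile tilings, and doing an area/length computation in each case to decide when $N$ can be a perfect square.

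First we invoke the structural results on tilings of a triangle $T$ by a tile $R$ not similar to $T$ (Laczkovich's classification). If $T$ is not isosceles and $R\not\sim T$, then $R$ and $T$ fall into a short list of configurations:
\begin{enumerate}[(a)]
\item $R$ is a right triangle and $T$ is a right triangle composed of copies of $R$;
\item $R$ has angles $(\pi/6,\pi/3,\pi/2)$, or $R$ has a $2\pi/3$ angle;
\item the angle relations $C=\pi/3$, $B=2A$, or $C=2A+B/2$, with $T$ built from tiles having angles related to $A/2$;
\item the case $C=A/2+B$.
\end{enumerate}
For each configuration, Laczkovich's edge-length equations express the sides of $T$ as integer combinations of the sides $a,b,c$ of $R$.

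Next, in each case we compare areas: $N=\operatorname{area}(T)/\operatorname{area}(R)$. Writing $T$'s sides as $\lambda$ times the tile sides, with $\lambda$ determined by the rationality parameter (for instance $\sqrt3\tan(A/2)=p/q$), we get $N$ as an explicit rational expression in $p,q$ (or in $M,K$). The goal is to show that this quantity is a square times a fixed non-square factor, so $N$ cannot be a square. Typical mechanisms are:
\begin{itemize}
\item the ratio of areas contains a factor such as $3$ or $M^2+K^2$ that is not a rational square;
\item the similarity ratio is irrational, e.g.\ $N=\lambda^2\cdot 3$, so squareness forces $\sqrt3\in\Q$.
\end{itemize}
In the right-triangle case (2), $T$ with legs in ratio $M/K$ tiled by $R$ gives $N=k^2MK$, or $(M^2+K^2)$ times a square. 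A square $N$ then forces $T\sim R$ or forces $M^2+K^2$ to be a square; in the latter situation the tiling reduces to a reptiling, which is excluded by hypothesis.

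The main obstacle is the case $C=A/2+B$. There the area ratio is a square times $2K^2-M^2$ (coming from $\cos(A/2)=1-2\sin^2(A/4)$ and a $\sin$ relation), and this is exactly why the exception (2) appears. For this case we must compute the side lengths of $T$ in terms of the tile precisely, using the law of sines with $\sin(A/2)$ and $\sin(A/4)=M/(2K)$, and track which square roots survive. We will try first to reduce everything to the quadratic field $\Q(\sqrt{2K^2-M^2})$ and read off $N$ up to rational squares.

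The other cases (3)–(8) should be routine once the lengths are written in $\Q(\sqrt3)$ or $\Q$. Since $N$ is rational, we must verify in each one that the non-square factor actually persists and is not cancelled by the particular integer combination.
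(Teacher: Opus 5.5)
Your overall plan (classify the non-reptile tilings, then compare areas case by case) matches the paper's. The gap is that the step carrying all the weight is missing. In every row of the table in Proposition~\ref{prop:TtoR} the tile is non-right, has incommensurable angles and is not similar to $T$. So by Theorem~\ref{thm:rationality} both $R$ and $T$ have commensurable sides, and the similarity ratio is rational; your ``irrational ratio, $\sqrt3\in\Q$'' mechanism never occurs.

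Nor is there a fixed non-square factor. Modulo rational squares, $N$ is a nonconstant rational function of the parameter $t=\tan(\alpha/2)/\sqrt3$ or $s=2\sin(\alpha/2)$:
\begin{itemize}
\item for $B=2A$ with a $2\pi/3$ tile, $N\equiv \tfrac23\cdot\tfrac{3t^2-1}{(3t+1)(t-1)}$;
\item for $B=2A$ with a Group~1 tile, $N\equiv(2-s^2)(3-s^2)$;
\item for $C=2A+B/2$, $N\equiv\tfrac{3t^2-6t-1}{(t-1)(3t+1)}$;
\item for $T=(\alpha,\alpha+2\beta,\alpha+\beta)$, you must show $b(a+b)$ and $a^2+ab+b^2$ are never both squares.
\end{itemize}
Each of these asks for the rational points of a genus-one curve that does have rational points, at degenerate or excluded parameter values. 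So no ``persisting factor'' or congruence argument can rule out further points. The paper needs that the elliptic curves 24.a4, 96.b1, 144.a1 and 36.a2 have rank $0$, and that their torsion points correspond only to those degenerate values (Propositions~\ref{prop:nt-squares}--\ref{prop:nonsquare4}). Only the row $T=(2\alpha,2\beta,\alpha+\beta)$ yields to a mod-$3$ descent (Proposition~\ref{prop:60-angle-existence-2}). Calling these cases ``routine'' is exactly where your proof is absent.

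Your starting classification is also inaccurate. For non-isosceles $T$, Theorem~\ref{thm:l1995-5.3} and Lemma~\ref{lem:RandT} force both $T$ and $R$ to have incommensurable angles in any non-reptile tiling. Theorem~\ref{thm:l1995-4.1} then leaves only the Group~1 and Group~2 configurations, i.e.\ the six rows of Proposition~\ref{prop:TtoR}. Your configuration (a) and the $(\pi/6,\pi/3,\pi/2)$ tile in (b) never arise here. They are reptile phenomena (conditions (2) and (3) of Theorem~\ref{thm:main}) and are excluded by hypothesis. The argument you sketch for them is wrong in any case: a square tile count does not turn a tiling into a reptiling.

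You also have the difficulty backwards: $C=A/2+B$ is the easy case. With $s=m/k$, the tile has sides proportional to $(s,1-s^2,1)$ and $T=(2\alpha,\beta,\alpha+\beta)$ has sides proportional to $(s(2-s^2),1-s^2,1)$. The area ratio is therefore $2-s^2$, so $N$ is $2k^2-m^2$ times a rational square. Everything is rational and no quadratic field is involved. That argument works, and is slightly more self-contained than the paper's appeal to Beeson's tiling equation $M^2+N=2K^2$. But the other five rows are where the real work lies.
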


After giving some preliminaries in Section~\ref{sec:preliminaries}, we prove
the  left-to-right direction of Theorem~\ref{thm:main} in
Section~\ref{sec:forward}, showing that one of the given conditions must hold
if we have a non-square tiling. Then in Section~\ref{sec:existence} we prove the
right-to-left direction.  We apply the following two steps to each case:  
\begin{itemize}
\item  The number $N$ of tiles is given by dividing the area of $T$ by the area
of the tile.  That gives a formula for $N$ in terms of the angles and sides
of $T$, differing per case.
\item Apply number theory to prove that $N$ cannot be a square. For the number of tiles
to be a square, we would have integer solutions to Diophantine equations
of the form $N = Z^2$. These equations reduce to elliptic curves. 
As it turns out, the ranks of all these elliptic curves are zero, so it
is enough to find their torsion points, and to check that those points do not
correspond to actual tilings.
\end{itemize}

We prove Theorem \ref{thm:non-reptiling} in Section \ref{sec:uniqueness}.
We end with examples of our tilings in Section~\ref{sec:examples}.

\section{Preliminaries}
\label{sec:preliminaries}

We say that a triangle has \emph{commensurable angles} if all of its angles are rational multiples of $\pi$ and \emph{incommensurable angles} otherwise. The case in which the tiled triangle has commensurable 
angles has been dealt with in \cite{snover1991,laczkovich1995tilings,laczkovich2012tilings}.
The result, however, is not stated in those sources in the exact form we need.
We therefore state it, and extract its proof from the cited
papers.  We start with a lemma:

\begin{lem} \label{lem:RandT} Let a triangle $T$ be tiled by triangle $R$,
where $T$ is not equilateral. Then $R$ has  commensurable angles if and only
if $T$ has commensurable angles.
\end{lem}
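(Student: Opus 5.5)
One direction is immediate from the vertex structure of a tiling. At each vertex of $T$, the angle of $T$ is filled exactly by angles of tiles meeting there. So each angle of $T$ is a nonnegative integer combination $p\alpha+q\beta+r\gamma$ of the angles $(\alpha,\beta,\gamma)$ of $R$, with $p+q+r\ge 1$. If $R$ has commensurable angles, every such combination is a rational multiple of $\pi$. Hence $T$ has commensurable angles.

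For the converse, I assume $T$ has commensurable angles and $R$ does not. I then aim for a contradiction unless $T$ is equilateral. Since $\alpha+\beta+\gamma=\pi$, incommensurability means at most one of $\alpha,\beta,\gamma$ is a rational multiple of $\pi$. Consider the vector space $V$ over $\Q$ spanned by $\alpha,\beta,\gamma$ modulo $\Q\pi$; it has dimension $1$ or $2$. I plan to use the standard counting of angle equations over the whole tiling. Every interior vertex, and every vertex lying on an edge of $T$ or of a tile, gives a relation $p\alpha+q\beta+r\gamma\in\{\pi,2\pi\}$. Each corner of $T$ gives one of its angles. Summing over all vertices counts each of $\alpha,\beta,\gamma$ exactly $N$ times, once per tile.

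\textbf{Case $\dim V=2$.} Here $\alpha,\beta,\pi$ are $\Q$-independent. Every relation $p\alpha+q\beta+r\gamma=k\pi$ then forces $p=q=r$. So every vertex uses the three angles of $R$ equally often. This includes the corners of $T$, and each corner angle is less than $\pi$, so each corner angle equals exactly one of $\alpha,\beta,\gamma$. But the corner angle is also $p(\alpha+\beta+\gamma)=p\pi$ with $p\ge 1$, which is impossible.

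\textbf{Case $\dim V=1$.} Here there is a single $\Q$-linear relation $u\alpha+v\beta+w\gamma=0$ modulo $\Q\pi$. Each vertex relation, and each corner angle that is a rational multiple of $\pi$, must be compatible with this one-dimensional structure. I would follow Laczkovich's argument in \cite{laczkovich1995tilings}. Write the angles as $\alpha=a\theta+r_1\pi$ and so on, with $\theta$ irrational relative to $\pi$. Each corner of $T$ then gives $pa+qb+rc=0$, with the coefficients weighted by the $\theta$-components. One then shows, using the edge-length (side-matching) equations together with the law of sines, that the only way to make the corners of $T$ rational in this situation forces all three corners of $T$ to be equal. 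The mechanism is that the same combination of $R$'s angles must appear at each corner. That makes $T$ equilateral, and the known tilings of an equilateral triangle by an incommensurable triangle (for example $\alpha+\beta=2\pi/3$, $\gamma=\pi/3$, with $\alpha$ arbitrary) show that exception is genuine.

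The main obstacle is this one-dimensional case. Pure angle counting does not suffice there, and one needs the finer combinatorial/length analysis from \cite{laczkovich1995tilings,laczkovich2012tilings}. I would extract that step by citation rather than reprove it.
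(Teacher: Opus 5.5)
\textbf{Verdict: there is a gap in the one-relation case, but it can be closed with the counting you already set up.}

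\textbf{What is correct.} Your first direction is fine. So is the case $\dim V=2$: the remark that each corner ``equals exactly one of $\alpha,\beta,\gamma$'' is a non sequitur, but the contradiction that a corner would equal $p\pi\ge\pi$ is all you need.

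\textbf{The gap.} The paper's own proof is only a pointer to the first paragraph of the proof of Theorem~5.3 in \cite{laczkovich1995tilings}, so a precise citation for your remaining case would be acceptable. As written, however, the case $\dim V=1$ is not proved. The ``mechanism'' you give (side-matching and the law of sines forcing the same combination of tile angles at each corner) is asserted rather than derived, and you name no result that delivers it. Your claim that angle counting cannot settle this case is also mistaken. No length information is needed: the identity $\sum_V (p_V,q_V,r_V)=(N,N,N)$, which you wrote down but never used, finishes the job once combined with a sign argument.

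\textbf{How to close it.} Put $e=(1,1,1)$. Let $K$ be the plane of real $(p,q,r)$ with $p\alpha+q\beta+r\gamma\in\Q\pi$, let $L=K\cap\Z^3$, and let $v(p,q,r)=(p\alpha+q\beta+r\gamma)/\pi$, so $v(e)=1$.

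Every vertex vector lies in $L$; the corners do because $T$ is commensurable. A corner has $v<1$, so it has a zero coordinate and lies on one of the two boundary rays of $K\cap\mathbb{R}^3_{\ge 0}$.

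Let $g$ be the primitive lattice vector on one of these rays and $h$ its largest coordinate. Then $L=\Z e+\Z g$, the other ray is spanned by the primitive vector $he-g$, and $v(g)+v(he-g)=h$.

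If both rays carried corners, both values would be $<1$. That is impossible for $h\ge 2$. For $h=1$, the equation $Pv(g)+Q(1-v(g))=1$ with $P,Q\ge 1$ and $P+Q\ge 3$ has no solution with $0<v(g)<1$.

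So the corners are $k_1g,k_2g,k_3g$ with $v(g)=1/k$, where $k=k_1+k_2+k_3\ge 3$.
\begin{itemize}
\item If $k=3$, every corner equals $\pi/3$ and $T$ is equilateral.
\item If $k\ge 4$, let $s(ne+mg)=m$. A non-corner vertex with $m<0$ would need $n\ge |m|h$ and $v=n-|m|/k\in\{1,2\}$. This forces $k\mid m$, hence $v\ge kh-1\ge 3$, a contradiction. So $s\ge 0$ at every non-corner vertex, while the corners contribute $k>0$. This contradicts $\sum_V s=s(Ne)=0$.
\end{itemize}

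\textbf{Your example of the exception.} The known tilings of the equilateral triangle by an incommensurable tile, such as the $(3,5,7)$ tilings in this paper, have $\gamma=2\pi/3$ and $\alpha+\beta=\pi/3$; your example swaps these. Also, for congruent tiles $\alpha$ is not arbitrary: it must satisfy $\sqrt3\tan(\alpha/2)\in\Q$ (Proposition~\ref{prop:group-2-alg}).
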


\begin{proof} See the first paragraph of the proof of Theorem~5.3 in \cite{laczkovich1995tilings}.
\end{proof}

\begin{thm}[\cite{laczkovich1995tilings,laczkovich2012tilings}]
\label{thm:l1995-5.3} Suppose $T$ has commensurable angles and tiles into $R$.
If $T$ is not an isosceles triangle, then the tiling is a
reptiling.
\end{thm}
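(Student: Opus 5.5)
The plan is to extract the statement from the known classification of tilings with commensurable angles, as the paper indicates, in three steps.

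\textbf{Step 1 (angles of the tile).} Suppose $T$, with angles $(A,B,C)$ all rational multiples of $\pi$ and $T$ not isosceles, is tiled by $R$ with angles $(\al,\be,\ga)$. Lemma~\ref{lem:RandT} applies, since $T$ is not equilateral, so $\al,\be,\ga$ are also rational multiples of $\pi$. At each vertex of $T$ the tile angles meeting there sum to the angle of $T$. At every interior vertex the tile angles sum to $\pi$ or $2\pi$. So we obtain a system of linear relations
\[
p\al+q\be+r\ga = A \qquad (p,q,r \ge 0 \text{ integers}, \ p+q+r\ge 1),
\]
and similarly for $B$ and $C$. The aim of this step is to show that, unless $\{\al,\be,\ga\}=\{A,B,C\}$, the tile $R$ must be among the short list of exceptional shapes from \cite{laczkovich1995tilings}. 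These are: right triangles, $R$ with angles $(\pi/6,\pi/6,2\pi/3)$, and triangles with an angle $\pi/3$ or $2\pi/3$. I would quote this list from the first part of the proof of Theorem~5.3 of \cite{laczkovich1995tilings}, rather than rederive it.

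\textbf{Step 2 (eliminating the exceptional tiles).} I would go through the exceptional tiles one at a time.

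For a right-triangle tile, Snover et al.\ \cite{snover1991} and \cite{laczkovich2012tilings} show that a non-isosceles $T$ with commensurable angles tiled by a right triangle $R$ must be similar to $R$. The remaining possibility is a right triangle with angles $\pi/6,\pi/3,\pi/2$. This tile can tile non-similar triangles, but only equilateral ones or isosceles ones with angles $(\pi/6,\pi/6,2\pi/3)$, and both are excluded by hypothesis.

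For tiles with an angle $\pi/3$ or $2\pi/3$ but not similar to $T$, \cite[Thm.~5.3]{laczkovich1995tilings} shows that the tiled triangle is equilateral or has two equal angles. Again this is excluded.

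Thus in every surviving case $R$ has the same angles as $T$, i.e.\ $R$ is similar to $T$, which is exactly the claim that the tiling is a reptiling.

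\textbf{Main obstacle.} The hard part is making sure the cited results cover every configuration. \cite{laczkovich1995tilings} is stated in terms of which $(T,R)$ pairs occur, and some cases there are handled in \cite{laczkovich2012tilings}, namely tilings where $R$ has a right angle but vertices of $T$ receive mixed angles. I would check carefully that each non-similar pair on the list forces $T$ to have two equal angles. If some listed pair has $T$ scalene, for example $T$ with angles $(\pi/6,\pi/2,\pi/3)$ tiled by $R$ with angles $(\pi/6,\pi/6,2\pi/3)$, this check would instead produce a counterexample. Case (3) of Theorem~\ref{thm:main} suggests such a pair may exist. In that event the statement would need the exception added explicitly, and I would verify it against the lists in \cite{laczkovich2012tilings}.
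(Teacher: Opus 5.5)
\textbf{There is a genuine gap: your proposal never reaches the conclusion, and the paper's proof is a single citation that makes your case analysis unnecessary.}

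Step~1's list of ``exceptional'' tiles is asserted rather than derived, and you give no reason it is exhaustive. Step~2 then rests either on citations that do not say what you need or on the statement itself. \cite{snover1991} treats only reptilings (tiles similar to $T$) and determines the possible numbers of tiles, so it cannot exclude a non-similar right-triangle tile. For tiles with an angle $\pi/3$ or $2\pi/3$, you invoke Theorem~5.3 of \cite{laczkovich1995tilings}, which already contains the whole statement; that makes the case split circular. Only your handling of the $(\pi/6,\pi/3,\pi/2)$ tile is self-contained: every angle of $T$ is then a multiple of $\pi/6$, which leaves just the three shapes you name.

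The missing point is that no case analysis is needed. Theorem~5.3 of \cite{laczkovich1995tilings} states that if $T$ has commensurable angles and is not isosceles, then $c(T)=1$. Here $c(T)$ is the number of pairwise non-similar triangles into which $T$ can be tiled. Since $T$ trivially tiles into copies of itself, every tile of $T$ is similar to $T$, so every tiling is a reptiling. That is the paper's entire proof.

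Your closing worry rests on a misreading of Case~(3) of Theorem~\ref{thm:main}, and it leaves the proposal ending in doubt. That case comes from a reptiling: the $(\pi/6,\pi/3,\pi/2)$ triangle splits into $3$ congruent copies of itself (Figure~\ref{figure:tiling3}). This is the $N=3M^2$ case of Theorem~\ref{thm:reptile}, and no $(\pi/6,\pi/6,2\pi/3)$ tile is involved.

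That pair does not occur, as $c(T)=1$ asserts, and you can also rule it out directly:
\begin{itemize}
\item Scale the tile to sides $1,1,\sqrt3$. Every tiling edge then has a direction that is a multiple of $\pi/6$.
\item Because $1$ and $\sqrt3$ are linearly independent over $\Q$, each interior maximal segment carries equally many unit edges on its two sides, and equally many $\sqrt3$-edges.
\item Write these balance conditions for each of the six edge directions, in terms of how many tiles occur in each of the $12$ possible orientations. The resulting linear system forces every side of the $(\pi/6,\pi/3,\pi/2)$ triangle to have length $0$.
\end{itemize}
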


\begin{remark} Of course an equilateral triangle is isosceles, but often in this 
subject, the equilateral case is treated separately.  We emphasize that this theorem
excludes equilateral triangles, too.
\end{remark}

\begin{proof} Following \cite{laczkovich1995tilings}, let $c(T)$ be the 
number of distinct non-similar triangles $R$ into which $T$ can be tiled.  Then $c(T) = 1$ 
means that $T$
can only be tiled by $R$ similar to $T$ (that is, a reptiling). 
Theorem~5.3 of \cite{laczkovich1995tilings} says that, under the assumptions that $T$ has
commensurable angles and is not isosceles, or is the isosceles right triangle,
then $c(T) = 1$, and if $T$ is isosceles but neither right nor equilateral, then $c(T) = 2$.
Hence, under the assumptions of our theorem, $c(T) = 1$. 
\end{proof}

The main result related to reptiling is:
\begin{thm}[\cite{snover1991}]
    \label{thm:reptile} Suppose $T$ can be reptiled into $N$ tiles. Then $N$ must be one (or more) of:
    \begin{itemize}
        \item $N = M^2$, which is possible for any triangle $T$ and any natural number $M$;
        \item $N = M^2 + K^2$, in case where $N$ is not a square, $T$ must be a right triangle with legs having ratio $M/K$;
        \item $N = 3M^2$, in which case $T$ must be the $(\pi/2, \pi/3, \pi/6)$ right triangle.
    \end{itemize}
\end{thm}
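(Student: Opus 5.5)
The plan is to reduce the theorem to an area-and-similarity argument plus a classification of the admissible similarity factors. If $T$ is reptiled into $N$ tiles, each tile is $T$ scaled by $\lambda = 1/\sqrt{N}$. The case $N = M^2$ is realized for every triangle by the standard construction (Figure~\ref{figure:quadratic}): divide each side into $M$ equal parts and draw the lines parallel to the sides through the division points. For the other two cases we construct tilings explicitly. If $T$ is a right triangle with legs $M\ell$ and $K\ell$, the altitude to the hypotenuse splits $T$ into two triangles similar to $T$, with scale factors $M/\sqrt{M^2+K^2}$ and $K/\sqrt{M^2+K^2}$. Tiling each of these into $M^2$, respectively $K^2$, copies of the common tile gives $M^2+K^2$ tiles. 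For the $30$-$60$-$90$ triangle, the equilateral decomposition gives $3$ tiles, and subdividing each of those gives $3M^2$.

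The substance is the converse: any reptiling falls into one of these cases. Here I follow Snover et al. Let the tile $R$ have sides $\lambda a, \lambda b, \lambda c$, where $a,b,c$ are the sides of $T$.
\begin{enumerate}[(i)]
\item Each side of $T$ is covered by tile edges, so $a = p\lambda a + q\lambda b + r\lambda c$ with nonnegative integers $p,q,r$, and similarly for $b$ and $c$. Hence $1/\lambda$ is an eigenvalue of a nonnegative integer $3\times 3$ matrix $P$ acting on the vector $(a,b,c)$.
\item Therefore $\sqrt N$ is an algebraic integer of degree at most $3$. Since $N$ is an integer, $\sqrt N$ has degree $1$ or $2$.
\item If $\sqrt N$ is rational, then $N$ is a square and we are in the first case.
\item Otherwise $\sqrt N$ is a quadratic irrationality, and the eigenvector relation forces a $\mathbb{Q}(\sqrt N)$-linear dependence among $a,b,c$ of a restricted form.
\end{enumerate}

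The main obstacle is this last case. One must combine the linear relations with the law of cosines (or the angle structure at the vertices) to conclude that either $T$ is right with $b/a$ rational, or $T$ is the $30$-$60$-$90$ triangle. The approach is to look at the angles meeting at a vertex of $T$. At a vertex with angle $A$, the tile angles fit exactly, which gives $A = iA + jB + kC$. Combined with $A+B+C=\pi$, this constrains the angles strongly unless the decomposition is trivial. The key observation is that if a tile has an edge along a side of $T$ that is not paired with the matching side, the sides become incommensurable over $\mathbb{Q}$ only in controlled ways.

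Concretely, I would show that $P$ must have rank-one defect. Writing $N = s^2 d$ with $d$ squarefree and $d>1$, the Galois conjugate $-\sqrt N$ must also be an eigenvalue of $P$. Taking traces and norms of the $2\times 2$ block then gives the side ratios. In particular $a^2+b^2=c^2$ with $a/b$ rational when $d$ is a sum of two squares, and $c = 2a$, $b = \sqrt3\,a$ when $d=3$. Finally, the area identity $N \cdot \mathrm{area}(R) = \mathrm{area}(T)$ turns the side data into $N = M^2+K^2$ or $N = 3M^2$, respectively.
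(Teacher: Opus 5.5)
Your proposal has a genuine gap in the necessity direction. The paper gives no proof of this theorem (it is quoted from Snover, Waiveris and Williams), so the proposal has to stand on its own.

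Your constructions are fine, and so are steps (i)--(iii). A tile meeting a side of $T$ in a segment meets it in a whole edge, so $\sqrt N$ is an eigenvalue of a nonnegative integer matrix with positive eigenvector $(a,b,c)$, and $\sqrt N\in\mathbb{Q}$ forces $N$ to be a square. The gap is the whole argument when $\sqrt N\notin\mathbb{Q}$.

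Everything you propose to extract from $P$ is information about lengths alone: the conjugate eigenvalue $-\sqrt N$, and traces and norms of a $2\times 2$ block. Essentially it says that $\mathbb{Q}a+\mathbb{Q}b+\mathbb{Q}c$ is closed under multiplication by $\sqrt N$, i.e.\ $b/a,\,c/a\in\mathbb{Q}(\sqrt N)$. It carries no angle information, so it cannot produce $a^2+b^2=c^2$, the $30$-$60$-$90$ shape, or your dichotomy ``$d$ a sum of two squares or $d=3$''.

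Concretely, for every non-square $N$ the isosceles triangle with $a=1$, $b=c=\sqrt N$ satisfies $\sqrt N a=b$, $\sqrt N b=Na$, $\sqrt N c=Na$. So it passes every test you can run on $P$, although it is neither right nor $30$-$60$-$90$. Take $N=7$, where neither of your two cases can occur. The scalene obtuse triangle $(1,\sqrt2,2)$ likewise has $\sqrt2 a=b$, $\sqrt2 b=c$, $\sqrt2 c=2b$.

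What is missing is a genuinely geometric lemma, and your vertex paragraph only gestures at it. In the generic case each corner of $T$ holds a single tile, so the relation $A=iA+jB+kC$ is vacuous. The real information is which side of each boundary tile lies on which side of $T$. That is governed by the angle sums at tiling vertices on the sides and in the interior of $T$. You would have to show this arrangement is incompatible with $\sqrt N\notin\mathbb{Q}$ unless $T$ has a right angle. Then, in the right-angle case, you must separate rational leg ratio from the $30$-$60$-$90$ triangle. None of this is in the proposal; the phrases ``constrains the angles strongly'' and ``incommensurable only in controlled ways'' are not statements that can be checked.

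Your last step also has a smaller gap. The area identity is automatic for a reptiling, since the tile is $T$ scaled by $1/\sqrt N$, so it cannot turn side data into $N=M^2+K^2$. Even once $T$ is known to be right with coprime legs $M,K$, the field information only shows that $N$ and $M^2+K^2$ have the same squarefree part $d$. To get $N=j^2(M^2+K^2)$ you need integrality of the covering coefficients. Write $M^2+K^2=t^2d$ and $N=s^2d$. The rational part of the covering relation for each leg then gives $t\mid sM$ and $t\mid sK$, hence $t\mid s$.
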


Combining these two theorems, we obtain:
\begin{cor}
    \label{cor:commensurable} If $T$ has commensurable angles, then $T$ admits a non-square tiling if and only if one of the following is true:
    \begin{enumerate}[{\rm (1)}]
        \item $T$ is isosceles (including equilateral);
        \item $T$ is a right triangle with legs in integral ratio $M/K$ where $M^2+K^2$ is not a square;
        \item $T$ is the $(\pi/2, \pi/3, \pi/6)$ right triangle.
    \end{enumerate}
\end{cor}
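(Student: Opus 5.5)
We prove the two directions separately. Theorem~\ref{thm:l1995-5.3} reduces the non-isosceles case to reptilings, and Theorem~\ref{thm:reptile} classifies the possible numbers of tiles in a reptiling. The isosceles case is handled by an explicit construction.

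\emph{Forward direction.} Suppose $T$ has commensurable angles and admits a tiling into $N$ tiles, with $N$ not a square. If $T$ is isosceles (including equilateral), condition (1) holds and we are done. Otherwise Theorem~\ref{thm:l1995-5.3} applies, so the tiling is a reptiling. By Theorem~\ref{thm:reptile}, since $N$ is not a square, one of two things happens:
\begin{itemize}
\item $N=M^2+K^2$ and $T$ is a right triangle with legs in ratio $M/K$. Because $N$ is not a square, $M^2+K^2$ is not a square, which is condition (2).
\item $N=3M^2$ and $T$ is the $(\pi/2,\pi/3,\pi/6)$ triangle, which is condition (3).
\end{itemize}

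\emph{Reverse direction.} We need a non-square tiling in each case.
\begin{itemize}
\item Case (2): the legs are $M$ and $K$. Drop the altitude to the hypotenuse. This splits $T$ into two triangles similar to $T$, with similarity ratios $M/\sqrt{M^2+K^2}$ and $K/\sqrt{M^2+K^2}$. Tile the first by $M^2$ copies and the second by $K^2$ copies of the triangle scaled by $1/\sqrt{M^2+K^2}$, using the generic square reptilings of Figure~\ref{figure:quadratic}. This gives $M^2+K^2$ congruent tiles, a non-square number by hypothesis.
\item Case (3): the $30$-$60$-$90$ triangle is cut into $3$ congruent copies of itself. Take the bisector of the $60^\circ$ angle, then drop the perpendicular from the point where it meets the opposite leg to the hypotenuse. $3$ is not a square.
\item Case (1), non-equilateral isosceles: the altitude from the apex splits $T$ into $2$ congruent right triangles.
\item Case (1), equilateral: joining the centroid to the three vertices gives $3$ congruent triangles. (Alternatively, the centroid's perpendiculars to the sides give $6$ congruent $30$-$60$-$90$ triangles.)
\end{itemize}
In every case the number of tiles is not a square.

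The only delicate point is the bookkeeping between hypotheses. Theorem~\ref{thm:l1995-5.3} excludes isosceles triangles, and the Remark notes this includes equilateral ones. That is exactly why isosceles triangles must be handled directly in both directions, and we must make sure the constructions cover the equilateral triangle too. We should also check that the right-triangle case of Theorem~\ref{thm:reptile} indeed forces the leg ratio to be rational, namely $M/K$ with $N=M^2+K^2$. This is part of Snover et al.'s statement, so we take it as given.
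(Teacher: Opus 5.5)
Your proof is correct and follows the paper's argument. Isosceles triangles, including the equilateral one, get an explicit non-square tiling. Non-isosceles ones are forced by Theorem~\ref{thm:l1995-5.3} to be reptilings, and Theorem~\ref{thm:reptile} then yields (2) or (3).

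The only difference is that you write out the biquadratic and $3$-tile constructions for the converse in cases (2) and (3). The paper leaves these to the cited reptiling results, and later to Golomb's construction and Figure~\ref{figure:tiling3}. Also, the altitude split already handles the equilateral triangle, so your separate centroid construction is unnecessary.
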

\begin{proof}
If $T$ is isosceles, $T$ has at least one non-square tiling into $2$ tiles along its axis of symmetry. If $T$ is not isosceles,  direct applications of Theorems~\ref{thm:l1995-5.3} and \ref{thm:reptile} finish the proof.
\end{proof}

We say that a triangle has \emph{commensurable sides} if its sides have pairwise rational ratios. Equivalently, we can assume the sides are scaled to be integers. The following result allows us to limit our attention to these cases:
\begin{thm}\cite[Theorem 1.2]{rationality}
    \label{thm:rationality}
 Let triangle $T$ be tiled by a tile $R$ such that
    \begin{itemize}
        \item $R$ is not similar to $T$;
        \item $R$ is not a right triangle;
        \item $R$ has incommensurable angles.
    \end{itemize}
    Then $R$ must have commensurable sides.
\end{thm}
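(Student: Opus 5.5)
The plan follows the additive-function (Hamel basis) method of Laczkovich. Suppose, for contradiction, that the sides $a,b,c$ of $R$ are not pairwise commensurable. Then there is a $\Q$-linear map $\varphi\colon\mathbb{R}\to\mathbb{R}$ such that the vector $(\varphi(a),\varphi(b),\varphi(c))$ is not proportional to $(a,b,c)$. The aim is to build from $\varphi$ a ``twisted'' triangle $R_\varphi$, with the same angles-and-directions combinatorics as $R$ but side lengths $\varphi(a),\varphi(b),\varphi(c)$, and to show that it must still close up. That would force $(\varphi(a),\varphi(b),\varphi(c))$ to be proportional to $(a,b,c)$, which is a contradiction.

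\textbf{Step 1: directions of the tiles.} Let $\alpha,\beta,\gamma$ be the angles of $R$. Since the angles are incommensurable, the only relation of the form $p\alpha+q\beta+r\gamma\in\pi\Z$ with small nonnegative integer coefficients is essentially $\alpha+\beta+\gamma=\pi$ (up to the finitely many extra relations, which I would classify). I will use this to analyse the vertex equations at every node of the tiling, of the form $p\alpha+q\beta+r\gamma=\pi$ or $2\pi$. The conclusion I expect is that the directions of all tile edges lie in a set $\{\theta_0+k\alpha+l\beta+m\gamma\}$, and, more importantly, that each edge direction can be consistently labelled by which side ($a$, $b$ or $c$) of which tile lies along it. The hypotheses that $R$ is not right and not similar to $T$ enter here: they exclude the cases where two tiles can be glued along a side so as to create a straight angle from a single angle type, which is what breaks the labelling. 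I would use Lemma~\ref{lem:RandT}, which says that $T$ then also has incommensurable angles, to pin down the angles of $T$ as integer combinations of $\alpha,\beta,\gamma$.

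\textbf{Step 2: balance along maximal segments.} For each maximal segment of the tiling, the tile edges on its two sides give equal total length. Replacing each edge length $\ell\in\{a,b,c\}$ by $\varphi(\ell)$ preserves these equalities, because they are $\Z$-linear relations among $a,b,c$. Now consider, for each tile, the closed vector sum $\sum \varphi(\ell_i)e^{i\theta_i}$ over its three oriented edges. Summing over all tiles, interior segments cancel by the balance relations and by the labelling from Step~1. The total therefore equals the corresponding twisted boundary sum of $T$, which vanishes since the sides of $T$ are $\Z$-combinations of $a,b,c$ along three directions. This shows that the ``defect vector'' $\varphi(a)e^{i\theta_a}+\varphi(b)e^{i\theta_b}+\varphi(c)e^{i\theta_c}$, weighted by the number of tiles in each orientation class, sums to zero.

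\textbf{Step 3: conclusion, and the main obstacle.} I then need to show that the orientation classes do not cancel one another's defects, i.e.\ that the weighted rotation sum $\sum_j n_j e^{i\psi_j}$ is nonzero. This again uses the incommensurability of the angles (the rotations $\psi_j$ are generic integer combinations of $\alpha,\beta,\gamma$) together with the exclusion of right $R$, where reflections pair orientations. Once this is established, the defect vector itself vanishes. By the law of sines, the only solutions of $x e^{i\theta_a}+y e^{i\theta_b}+z e^{i\theta_c}=0$ are proportional to $(a,b,c)$, which gives the contradiction. I expect Step~1, the rigid combinatorial control of tile orientations and edge labels from incommensurability, to be the main obstacle, together with its interaction with Step~3. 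My first attempt would be to enumerate all possible vertex types $p\alpha+q\beta+r\gamma\in\{\pi,2\pi\}$, show that apart from the trivial one they force at most one extra linear relation among the angles, and treat that single exceptional relation by hand.
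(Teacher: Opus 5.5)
\textbf{There is a genuine gap, in Step~2.} The paper gives no proof of this statement; it quotes it from the cited source. So your argument has to stand on its own, and it fails at the boundary term.

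Sum $\varphi(\ell)e^{i\theta}$ over the counterclockwise edges of all tiles. The interior cancellation is automatic: along an interior maximal segment, the edges on the two sides point in opposite directions and each side has total length equal to the segment. Additivity of $\varphi$ alone kills that contribution, so the labelling from Step~1 is not needed. What survives is the identity
\[
S_+D-S_-\overline{D}=\sum_{k=1}^{3}\varphi(L_k)\,e^{i\Theta_k},
\]
where $D=\varphi(a)e^{i\theta_a}+\varphi(b)e^{i\theta_b}+\varphi(c)e^{i\theta_c}$, $S_\pm$ are the sums of $e^{i\psi_j}$ over the directly congruent and the mirror-image tiles, and $L_k,\Theta_k$ are the side lengths and side directions of $T$.

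The right-hand side does not vanish in general. By the same law-of-sines remark you use at the end, it vanishes exactly when $(\varphi(L_k))_k$ is proportional to $(L_k)_k$. Over all additive $\varphi$, that is exactly commensurability of the sides of $T$, which is a consequence of the theorem and not available a priori. The fact that the $L_k$ are $\mathbb{Z}$-combinations of $a,b,c$ does not give it.

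Your argument therefore proves too much. Take a non-right triangle with incommensurable angles and incommensurable sides, e.g.\ sides $2,\sqrt5,\sqrt7$, and its quadratic $4$-tiling. This tiling is consistently labelled in your sense and has no mirror tiles. It also has $S_+=3-1=2\neq 0$, so your Steps~2--3 would force $D=0$ for every additive $\varphi$, i.e.\ commensurable sides, which is false. In fact the boundary term there equals $2D\neq 0$. The only hypothesis this example violates is $R\not\sim T$, and nothing in your Steps~1--3 tells it apart from the tilings the theorem is about.

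\textbf{What is missing.} You need a mechanism that makes ``not a reptiling'' act on the boundary term. Since the identity above holds for every tiling, it carries no information by itself. You have to feed in the structure of non-reptile tilings from Theorem~\ref{thm:l1995-4.1}: the extra relation $3\alpha+2\beta=\pi$ or $\gamma=2\pi/3$, and the specific expressions of the angles of $T$ in terms of $\alpha,\beta$. Then you must carry out the resulting case analysis. One possible tool is to also twist the edge directions through a homomorphism of the group generated by $\alpha,\beta,\pi$ and compare coefficients in the resulting Laurent-polynomial identity. That case analysis is where the content of the cited theorem lies, and the proposal has no such step.

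\textbf{A second problem, in Step~3.} Even granting Step~2, Step~3 is incomplete as stated. A mirror-image tile contributes $-e^{i\psi_j}\overline{D}$, not $e^{i\psi_j}D$. So to conclude $D=0$ you would need $|S_+|\neq|S_-|$, not $\sum_j n_je^{i\psi_j}\neq 0$. Mirror tiles occur in general, not only when $R$ is a right triangle.
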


\begin{prop}
\label{prop:group-1-alg}
A triangle $R$ with angles $(\alpha, \beta, \gamma)$ where $3\alpha + 2\beta =
\pi$ has commensurable sides if and only if
$$\sin (\alpha /2) \in \mathbb{Q}.$$
\end{prop}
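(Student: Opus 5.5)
By the law of sines, the sides of $R$ are proportional to $\sin\alpha$, $\sin\beta$, $\sin\gamma$. The plan is to express all three sines as polynomials in $s = \sin(\alpha/2)$ and reduce commensurability to the rationality of $s$.

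First eliminate $\beta$ and $\gamma$ using $3\alpha + 2\beta = \pi$:
\[
\beta = \tfrac{\pi}{2} - \tfrac{3\alpha}{2}, \qquad \gamma = \pi - \alpha - \beta = \tfrac{\pi}{2} + \tfrac{\alpha}{2}.
\]
Write $\theta = \alpha/2$, so that $0 < \theta < \pi/6$ because $\beta > 0$. Then
\[
\sin\beta = \cos 3\theta = \cos\theta\,(4\cos^2\theta - 3) = \cos\theta\,(1 - 4s^2), \qquad \sin\gamma = \cos\theta, \qquad \sin\alpha = 2s\cos\theta .
\]
Dividing through by the common factor $\cos\theta$, which is nonzero, the sides are proportional to
\[
a : b : c \;=\; 2s \,:\, 1 - 4s^2 \,:\, 1 .
\]

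Both directions then follow from this ratio.

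\emph{Sufficiency.} If $s \in \mathbb{Q}$, then $2s$, $1-4s^2$ and $1$ are rational. Each is positive: $s > 0$, and $s < \sin(\pi/6) = 1/2$ gives $1 - 4s^2 > 0$. So the sides have pairwise rational ratios.

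\emph{Necessity.} If the sides are commensurable, then $a/c = 2s$ is rational, and hence $s = \sin(\alpha/2) \in \mathbb{Q}$.

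There is no real obstacle here. The only points needing care are verifying the triple-angle identity $\cos 3\theta = 4\cos^3\theta - 3\cos\theta$ and checking that $\cos\theta \neq 0$ and the side lengths are positive in the admissible range $0<\theta<\pi/6$.
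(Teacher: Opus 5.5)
Your proof is correct and follows essentially the same route as the paper's: eliminate $\beta$ and $\gamma$, apply the triple-angle formula to get $a:b:c = 2s : 1-4s^2 : 1$ with $s=\sin(\alpha/2)$, and read off both directions. Your checks that $\cos(\alpha/2)\neq 0$ and $1-4s^2>0$ on $0<\alpha/2<\pi/6$ are small details the paper leaves implicit.
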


\begin{proof}
Let the sides of $R$ opposite $(\alpha,\beta,\gamma)$ be $(a,b,c)$ respectively. By the law of sines we have $a/c=\sin \alpha /\sin \gamma$
and $b/c=\sin \beta /\sin \gamma$. Thus we have to show that $\sin \alpha /\sin \gamma$ and $\sin \beta /\sin \gamma$ are both rational if and only if
$\sin (\alpha /2)$ is.

We have $\beta =(\pi -3\al )/2$ and $\gamma =(\pi +\alpha )/2$, and thus
$\sin \beta =\cos (3\alpha /2)=4\cos ^3 (\alpha /2) -3\cos (\alpha /2)$ and
$\sin \gamma =\cos (\alpha /2)$. Therefore, we have
$\sin \alpha /\sin \gamma =2\sin (\alpha /2)$ and $\sin \be /\sin \gamma =
4\cos ^2 (\alpha /2) -3=1-4\sin ^2 (\alpha /2)$, from which the statement follows.
\end{proof}

\begin{prop}
\label{prop:group-2-alg}
A triangle $R$ with angles $(\alpha, \beta, \gamma)$ where $\alpha + \beta$
equals $\pi/3$ or $2\pi /3$ has commensurable sides if and only if 
\begin{equation}\label{eq:205} 
  \sqrt{3} \sin \alpha \in \mathbb{Q}  \text{\ \ and\ }  \cos \alpha \in \mathbb{Q},
\end{equation}   
or equivalently, 
\begin{equation}\label{eq:206} \sqrt{3} \tan \tfrac \alpha 2 \in \mathbb{Q}.
\end{equation}
If \eqref{eq:205}  or \eqref{eq:206} holds, then
\begin{equation}  \label{eq:etrig}
\cos \al =\frac{1-3t^2}{1+3t^2} \text{\ \ and \ } \sin \al =\frac{2\sqrt{3} t}{1+3t^2},
\end{equation}
where $t = \tfrac 1{\sqrt 3} \cdot  \tan(\alpha /2) \in \qq$ and 
\begin{equation} \label{eq:268}
0<t<\tfrac 1 3\mbox{\qquad when $\alpha+\beta = \tfrac \pi 3$}. 
\end{equation}
\end{prop}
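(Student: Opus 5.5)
We follow the pattern of the proof of Proposition~\ref{prop:group-1-alg}: use the law of sines to reduce commensurability of sides to rationality of two sine ratios, then express those ratios in terms of $\alpha$ alone.

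\textbf{Reduction.} Let $\gamma=\pi-\alpha-\beta$. Then $\gamma=2\pi/3$ if $\alpha+\beta=\pi/3$, and $\gamma=\pi/3$ if $\alpha+\beta=2\pi/3$. In either case $\sin\gamma=\sqrt3/2$. By the law of sines, $R$ has commensurable sides if and only if both ratios
$$\frac{\sin\alpha}{\sin\gamma}=\frac{2}{\sqrt3}\sin\alpha \quad\text{and}\quad \frac{\sin\beta}{\sin\gamma}$$
are rational. The first ratio is rational exactly when $\sqrt3\sin\alpha\in\qq$, since $\frac{2}{\sqrt3}\sin\alpha=\frac{2}{3}\sqrt3\sin\alpha$.

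\textbf{Expressing $\sin\beta$.} For the second ratio, write $\beta=\pi/3-\alpha$ in the first case and $\beta=2\pi/3-\alpha$ in the second. Expanding gives
$$\sin\beta=\tfrac{\sqrt3}{2}\cos\alpha\mp\tfrac12\sin\alpha .$$
Dividing by $\sqrt3/2$ yields
$$\frac{\sin\beta}{\sin\gamma}=\cos\alpha\mp\tfrac{1}{3}\sqrt3\sin\alpha .$$
Hence, once $\sqrt3\sin\alpha$ is rational, the second ratio is rational if and only if $\cos\alpha$ is rational. This establishes the equivalence with \eqref{eq:205}.

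\textbf{Equivalence with \eqref{eq:206}.} Put $u=\tan(\alpha/2)$; note $0<\alpha<\pi$, so $u$ is defined and positive. The half-angle formulas give
$$\cos\alpha=\frac{1-u^2}{1+u^2}, \qquad \sin\alpha=\frac{2u}{1+u^2}.$$
\emph{Conversely}, suppose \eqref{eq:205} holds. Then
$$\sqrt3\tan(\alpha/2)=\frac{\sqrt3\sin\alpha}{1+\cos\alpha}\in\qq,$$
and the denominator is nonzero because $\alpha<\pi$.
\emph{Forward}, suppose $\sqrt3 u=s\in\qq$. Then $u^2=s^2/3$ is rational, so $\cos\alpha$ is rational, and $\sqrt3\sin\alpha=2s/(1+u^2)$ is rational.

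\textbf{The formulas \eqref{eq:etrig}.} Set $t=u/\sqrt3$, so that $u^2=3t^2$ and $u=\sqrt3\,t$. Substituting into the half-angle formulas gives \eqref{eq:etrig} directly. We should double-check the rationality claim for $t$ itself. Here $t=\tan(\alpha/2)/\sqrt3=s/3$, which is rational whenever $s=\sqrt3\tan(\alpha/2)$ is. So under \eqref{eq:206} we have $t\in\qq$, as stated.

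\textbf{The bound \eqref{eq:268}.} When $\alpha+\beta=\pi/3$ with $\beta>0$, we have $0<\alpha<\pi/3$, so $0<\alpha/2<\pi/6$. Since $\tan$ is increasing on this interval, $0<\tan(\alpha/2)<1/\sqrt3$. Dividing by $\sqrt3$ gives $0<t<1/3$.

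No step is a real obstacle. The only points needing care are the sign bookkeeping in the expansion of $\sin\beta$ across the two cases and making sure that $1+\cos\alpha\ne0$.
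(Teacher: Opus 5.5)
Your proof is correct and follows the paper's argument essentially step for step. The law of sines with $\sin\gamma=\sqrt3/2$ and $\sin\beta=\tfrac{\sqrt3}{2}\cos\alpha\mp\tfrac12\sin\alpha$ reduces commensurability to $\sqrt3\sin\alpha,\cos\alpha\in\mathbb{Q}$. The half-angle formulas, together with $\tan(\alpha/2)=\sin\alpha/(1+\cos\alpha)$, give the equivalence with $\sqrt3\tan(\alpha/2)\in\mathbb{Q}$ and the formulas in $t$, and monotonicity of $\tan$ on $(0,\pi/6)$ gives $0<t<1/3$. The extra checks you include ($1+\cos\alpha\ne0$, $t=s/3\in\mathbb{Q}$, $t>0$) are details the paper leaves implicit.
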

\begin{proof}  
Let $(a,b,c)$ be the sides of $R$ opposite $(\alpha,\beta,\gamma)$ respectively.
We have $\beta =\pi /3 -\alpha$ or $2\pi /3 -\alpha$, and thus 
$$ \sin \beta =\frac{\sqrt 3} 2 \cos \alpha \pm \frac 1 2 \sin \alpha.$$
By the law of sines we have 
$$\frac{a}{c} = \frac{\sin \alpha} {\sin \gamma} = \frac{2\sin \alpha} {\sqrt 3} = \frac{2\sqrt{3} \sin \alpha}{3} ,$$
and
$$\frac{b}{c}=\frac{\sin \beta} {\sin \gamma}=\cos \alpha \pm
\frac{\sqrt 3}{3} \sin \alpha .$$
Hence $a/c$ and $b/c$ are both rational if and only if $\sqrt 3 \sin \alpha \in \mathbb{Q}$ and $\cos \alpha \in \mathbb{Q}$.  
That proves that $R$ has commensurable 
sides if and only if \eqref{eq:205} holds. 
\smallskip

To show \eqref{eq:206} implies \eqref{eq:205}: let $t=(\tan \tfrac \alpha 2) /\sqrt 3$. 
By \eqref{eq:206}, $t$ is rational.  We have
\begin{eqnarray*}
\cos \al =\frac{1-\tan ^2 \tfrac \alpha 2}{1+\tan ^2 \tfrac \alpha 2} =\frac{1-3t^2}{1+3t^2}  \\
\  \sin \al =\frac{2\tan \tfrac \alpha 2}{1+\tan ^2 \tfrac \alpha 2} =
\frac{2\sqrt 3 t}{1+3t^2}.
\end{eqnarray*}  
Since $t$ is rational, \eqref{eq:205} holds.  Hence \eqref{eq:206} implies \eqref{eq:205}
and \eqref{eq:etrig}.

To show \eqref{eq:205} implies \eqref{eq:206}:  if \eqref{eq:205} holds, then the identity 
$$\tan (\alpha / 2) = \frac{\sin \alpha }{1 + \cos \alpha }$$
shows that $\sqrt{3}\tan(\alpha/2) \in  \mathbb{Q}$, so $t$ is rational. 

Finally we will prove \eqref{eq:268}.  Assume $\alpha+\beta = \pi/3$.  Then $\alpha < \tfrac \pi 3$,
so $$\tan \tfrac \alpha 2  < \tan \tfrac \pi 6 = \frac 1 {\sqrt 3}.$$
\begin{equation*}
 t = \frac {\tan \tfrac \alpha 2}{\sqrt 3} < \frac 1 3. \qedhere
 \end{equation*}   
\end{proof}

\section{The Forward Direction} \label{sec:forward}

\begin{thm}[Corollary of Theorem 4.1, \cite{laczkovich1995tilings}]
\label{thm:l1995-4.1}
Let a non-isosceles triangle $T$ be tiled with a triangle with  angles
$(\alpha, \beta, \gamma)$. Then one of the following must hold:
\begin{enumerate}[{\rm (1)}]
    \item the tiling is a reptiling;
    \item (Group 1): $3\alpha + 2\beta = \pi$ and $T$ has angles $(\alpha, 2\alpha, 2\beta)$ or $(2\alpha, \beta, \alpha + \beta)$; or
    \item (Group 2): $3\alpha + 3\beta = \pi$ (equivalently, $\gamma = 2\pi/3$) and $T$ has angles $$(\alpha, 2\alpha, 3\beta), (\alpha, 2\beta, 2\alpha + \beta), (\alpha, \alpha+\beta, \alpha+2\beta), \text{or }  (2\alpha, 2\beta, \alpha+\beta).$$
\end{enumerate}
\end{thm}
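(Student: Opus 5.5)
This statement is a corollary of Theorem 4.1 of \cite{laczkovich1995tilings}, so the plan is to quote that theorem and discard the cases that cannot occur for a non-isosceles $T$. Theorem 4.1 lists every possible pair $(T,R)$ with $T$ tiled by $R$ and the tiling not a reptiling. The first step is to recall that list precisely. The cases are: $T$ isosceles or equilateral with various tiles; $T$ a right triangle tiled by right triangles; the $(\pi/6,\pi/3,\pi/2)$ situations; and the families in which the tile has angles $(\alpha,\beta,\gamma)$ with $3\alpha+2\beta=\pi$ or $\gamma=2\pi/3$, together with the listed shapes for $T$.

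The second step removes the isosceles cases, since $T$ is assumed non-isosceles. The third step handles the cases where $T$ has commensurable angles. These include the cases where Laczkovich's list gives specific angle values, and the right-triangle cases in which the tile is similar to $T$. Here I invoke Theorem~\ref{thm:l1995-5.3}: a non-isosceles $T$ with commensurable angles admits only reptilings, so these cases land in alternative (1). If Theorem 4.1 also contains a right-triangle case where the tile is not similar to $T$, I would check that it forces isosceles or commensurable angles. Otherwise it would have to be absorbed into (1) by observing that the tile is in fact similar to $T$.

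The remaining cases are exactly the Group 1 and Group 2 shapes. For each shape I would verify that the angles sum to $\pi$, using $3\alpha+2\beta=\pi$ or $\alpha+\beta=\pi/3$, for instance $\alpha+2\alpha+2\beta=3\alpha+2\beta$ and $\alpha+2\beta+2\alpha+\beta=3(\alpha+\beta)$. This confirms the list is transcribed consistently. I would also note that the shapes may specialize to isosceles triangles for particular $\alpha$, for example $2\alpha=2\beta$; those are simply excluded by hypothesis and do not need to be listed.

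The main obstacle is bookkeeping, not mathematics. One must be sure that every non-reptile case of Theorem 4.1 either is isosceles, has commensurable angles (and is then covered by Theorem~\ref{thm:l1995-5.3}), or appears verbatim in Group 1 or Group 2 under some labeling of the tile's angles. To settle this I would go through the theorem's enumerated list line by line. Where the original uses a different angle labeling, I would rename the angles so that the defining relation reads $3\alpha+2\beta=\pi$ or $3\alpha+3\beta=\pi$.
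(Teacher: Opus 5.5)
Your outline (cite Laczkovich's Theorem 4.1, discard the isosceles outputs, and use Theorem~\ref{thm:l1995-5.3} for commensurable angles) uses the same ingredients as the paper's proof. However, it rests on a misreading of the cited result, and as a result one case is left uncovered. Theorem 4.1 of \cite{laczkovich1995tilings} is not an unconditional catalogue of all non-reptile tilings. It concerns tilings by a triangle whose angles $\alpha,\beta,\gamma$ are not all rational multiples of $\pi$; that is why the paper first establishes that the tile has incommensurable angles before invoking it. So you cannot apply it to an arbitrary tiling and then filter its output. Searching inside it for commensurable configurations such as the $(\pi/6,\pi/3,\pi/2)$ triangle is beside the point. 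As your proposal stands, a tiling of an incommensurable $T$ by a tile with commensurable angles is covered by neither citation. Your use of Theorem~\ref{thm:l1995-5.3} needs $T$ commensurable, and Theorem 4.1 needs the tile incommensurable.

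The repair is to put the case split before the citation, as the paper does. If $T$ has commensurable angles, Theorem~\ref{thm:l1995-5.3} applies because $T$ is not isosceles, so the tiling is a reptiling, which is (1). If $T$ has incommensurable angles, then so does the tile. Each angle of $T$ is the sum of the tile angles meeting at that corner, hence a nonnegative integer combination of $\alpha,\beta,\gamma$. This is the easy direction of Lemma~\ref{lem:RandT}, which the paper cites at this point. Now Theorem 4.1 applies. Its outcomes other than $T\sim R$ are either isosceles (including equilateral), hence excluded by hypothesis, or exactly the Group 1 and Group 2 shapes; your angle-sum checks then serve only to confirm the transcription. Your hedge about a right-triangle case also resolves itself here. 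If $\gamma=\pi/2$ and $\alpha/\pi\notin\mathbb{Q}$, then $\alpha,\beta$ are linearly independent over $\mathbb{Q}$. Writing the angles of $T$ as $a_i\alpha+b_i\beta$ with $\sum a_i=\sum b_i=2$ then leaves only $T\sim R$ or the isosceles triangles $(2\alpha,\beta,\beta)$ and $(\alpha,\alpha,2\beta)$.
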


\begin{proof} By Theorem \ref{thm:l1995-5.3}, we may assume that the angles
of $T$ are not commensurable. Therefore, by Lemma \ref{lem:RandT}, $\al$,
$\be$, $\ga$ are not commensurable either, and we  can apply
\cite{laczkovich1995tilings}, Theorem~4.1. That theorem has more cases in the
conclusion than shown here, but does not have our hypothesis that $T$ is not
isosceles. By adding in the requirement that $T$ is not isosceles, we reduce
the number of cases of the original result. \end{proof}

We say that a tiling is \emph{Group 1} or \emph{Group 2} if it is of the corresponding type in the statement of Theorem~\ref{thm:l1995-4.1}.
\begin{lem} \label{lem:T-pov-cases}
If a non-isosceles triangle $T$ has a  non-reptile tiling,
then one of the following four cases must hold for some permutation of the
angles $(A,B,C)$ of $T$:
\begin{itemize}
    \item $C = \pi/3$;
    \item $B = 2A$;
    \item $C = A/2 + B$; 
    \item $C = 2A + B/2$.
\end{itemize}
\end{lem}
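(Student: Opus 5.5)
By Theorem~\ref{thm:l1995-4.1}, a non-reptile tiling of the non-isosceles triangle $T$ by a tile with angles $(\alpha,\beta,\gamma)$ is either of Group~1 or of Group~2. The plan is to take each of the six possible angle triples of $T$ listed there, eliminate $\alpha$ and $\beta$ using the defining relation of the group, and check that some relation among the angles of $T$ has one of the four forms in the lemma. In every case the relation will be linear in the angles of $T$. The labels $(A,B,C)$ may be assigned in any order, so for each triple we only need to exhibit one convenient labelling.

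\emph{Group~1}, where $3\alpha+2\beta=\pi$.
\begin{itemize}
\item For angles $(\alpha,2\alpha,2\beta)$, set $A=\alpha$ and $B=2\alpha$. Then $B=2A$.
\item For angles $(2\alpha,\beta,\alpha+\beta)$, set $A=2\alpha$, $B=\beta$, $C=\alpha+\beta$. Then $C=A/2+B$.
\end{itemize}
(The angle sum is automatically $\pi$ in both triples by the group relation, so nothing more needs checking.)

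\emph{Group~2}, where $\alpha+\beta=\pi/3$.
\begin{itemize}
\item For $(\alpha,2\alpha,3\beta)$, take $A=\alpha$, $B=2\alpha$. Then $B=2A$.
\item For $(\alpha,\alpha+\beta,\alpha+2\beta)$, the middle angle is $\alpha+\beta=\pi/3$. So $C=\pi/3$.
\item For $(2\alpha,2\beta,\alpha+\beta)$, again $\alpha+\beta=\pi/3$ is an angle, so $C=\pi/3$.
\item For $(\alpha,2\beta,2\alpha+\beta)$, take $A=\alpha$, $B=2\beta$, $C=2\alpha+\beta$. Then $C=2A+B/2$.
\end{itemize}

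The first case, where the tiling is a reptiling, is excluded by hypothesis, so these six triples exhaust the possibilities. Since every case reduces to a direct substitution, there is no genuine obstacle. The only point needing care is that Theorem~\ref{thm:l1995-4.1} is applicable. It needs $T$ non-isosceles, which is assumed, and its proof already handles the commensurable case through Theorem~\ref{thm:l1995-5.3}, which forces a reptiling there.
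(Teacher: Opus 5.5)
Your proposal is correct and follows essentially the same route as the paper's proof. Both apply Theorem~\ref{thm:l1995-4.1}, discard the reptiling case, and read off one of the four relations from each of the six Group~1 and Group~2 angle triples by substituting the group relation.
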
 

\begin{proof} 
By Theorem~\ref{thm:l1995-4.1}, we must have a reptiling, a Group 1 tiling,
or a Group 2 tiling. We excluded the reptiling case in our assumptions.

In Group 2, $\alpha + \beta = \pi/3$, so in the $(\alpha, \alpha+\beta,
\alpha+2\beta)$ or $(2\alpha, 2\beta, \alpha+\beta)$ cases, we can conclude that
one of the angles, say $C$, equals $\pi/3$. The Group 1 $(\alpha, 2\alpha, 2\beta)$ and Group 2
$(\alpha, 2\alpha, 3\beta)$ both have one angle twice another; in other words,
$B = 2A$. There is one case left in both groups.

In the final Group 1 case, permuting $(A, B, C) = (2\alpha, \beta, \alpha +
\beta)$ gives $C = A/2+B$. In the final Group 2 case, permuting $(A, B, C) =
(\alpha, 2\beta, 2\alpha +\beta)$ gives $C = 2A + B/2$. 
\end{proof}

Thus, our task reduces to carefully classifying when a non-square tiling is
possible in these four cases. We  will first show that the conditions as they
are outlined in Theorem~\ref{thm:main} are necessary. Then, in Section
\ref{sec:existence},  we will show that they are sufficient by
exhibiting actual tilings.

\begin{prop} \label{prop:TtoR}
Let $T$ be a non-isosceles triangle with angles $(A,B,C)$
falling under one of the cases in the first column below. Suppose $T$ has a
non-reptile tiling into tile $R$ with angles 
$(\alpha,\beta,\gamma)$. Then the angles of $T$ are expressed in terms of
$(\alpha,\beta,\gamma)$, and satisfy rationality conditions, 
according to the other columns of the table.

\begin{center}
\begin{tabular}{c c c c}
Angles of $T$ & Rationality & $T$ in terms of $(\alpha,\beta,\gamma)$ & Relations \\
\hline
$C = \pi/3$ & $\sqrt{3}\tan(A/4) \in \Q$  &   $(2\alpha, 2\beta, \alpha + \beta)$   &  $\gamma = 2\pi/3$\\
$C = \pi/3$ & $\sqrt{3}\tan(A/2)\in \Q$  &    $(\alpha, \alpha+2\beta, \alpha+\beta)$ &$\gamma = 2\pi/3$ \\
$B = 2A$ & $\sqrt{3}\tan(A/2)\in \Q$  &$(\alpha, 2\alpha, 3\beta)$ &  $\gamma = 2\pi/3$ \\
$B = 2A$ & $\sin(A/2) \in \Q$          & $(\alpha, 2\alpha, 2\beta)$ &  $3\alpha + 2\beta = \pi$\\
$C = A/2 + B$ & $\sin(A/4)\in \Q$    & $(2\alpha,\beta,\alpha+\beta)$ & $3\alpha + 2\beta = \pi$\\
$C = 2A + B/2$ & $\sqrt{3}\tan(A/2)\in \Q$  & $(\alpha,2\beta,2\alpha+\beta)$& $\gamma = 2\pi/3$
\end{tabular}
\end{center}
\end{prop}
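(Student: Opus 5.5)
The plan is to combine Theorem~\ref{thm:l1995-4.1} with the rationality Theorem~\ref{thm:rationality}, and then translate the algebraic criteria of Propositions~\ref{prop:group-1-alg} and \ref{prop:group-2-alg} from the tile angle $\alpha$ to the angle $A$ of $T$.

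\textbf{Step 1: the shape of the tiling.} Since $T$ is non-isosceles and the tiling is not a reptiling, Theorem~\ref{thm:l1995-4.1} says the tiling is of Group~1 or Group~2. The angles of $T$ then take one of the six listed forms. Exactly as in the proof of Lemma~\ref{lem:T-pov-cases}, each form falls under the indicated case once we label the angles of $T$:
\begin{itemize}
\item Group~2, $(2\alpha,2\beta,\alpha+\beta)$: here $C=\alpha+\beta=\pi/3$.
\item Group~2, $(\alpha,\alpha+2\beta,\alpha+\beta)$: again $C=\pi/3$, with $A=\alpha$.
\item Group~2, $(\alpha,2\alpha,3\beta)$: here $B=2A$.
\item Group~1, $(\alpha,2\alpha,2\beta)$: again $B=2A$.
\item Group~1, $(2\alpha,\beta,\alpha+\beta)$: here $C=A/2+B$.
\item Group~2, $(\alpha,2\beta,2\alpha+\beta)$: here $C=2A+B/2$.
\end{itemize}
This yields the third and fourth columns of the table. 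Group~2 means $\gamma=2\pi/3$, i.e.\ $\alpha+\beta=\pi/3$; Group~1 means $3\alpha+2\beta=\pi$. In each row, reading off $A$ gives $A=2\alpha$ in the first and fifth rows and $A=\alpha$ in the others.

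\textbf{Step 2: $R$ has commensurable sides.} We verify the three hypotheses of Theorem~\ref{thm:rationality}.
\begin{itemize}
\item $R$ is not similar to $T$, because the tiling is not a reptiling.
\item $R$ has incommensurable angles. By Theorem~\ref{thm:l1995-5.3}, a non-isosceles $T$ with commensurable angles admits only reptilings, so $T$ has incommensurable angles. Then Lemma~\ref{lem:RandT} applies ($T$ is not equilateral) and transfers this to $R$.
\item $R$ is not a right triangle. In Group~2 the angles of $R$ are $\gamma=2\pi/3$ and $\alpha,\beta<\pi/3$. In Group~1, $\beta=(\pi-3\alpha)/2<\pi/2$ and $\gamma=(\pi+\alpha)/2>\pi/2$; also $\alpha<\pi/3$, so no angle equals $\pi/2$.
\end{itemize}
Hence $R$ has commensurable sides.

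\textbf{Step 3: translate the criteria to $A$.} In Group~1, Proposition~\ref{prop:group-1-alg} gives $\sin(\alpha/2)\in\Q$. This becomes $\sin(A/2)\in\Q$ in row four ($A=\alpha$) and $\sin(A/4)\in\Q$ in row five ($A=2\alpha$).

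In Group~2, Proposition~\ref{prop:group-2-alg} with $\alpha+\beta=\pi/3$ gives $\sqrt3\tan(\alpha/2)\in\Q$. This becomes $\sqrt3\tan(A/4)\in\Q$ in row one and $\sqrt3\tan(A/2)\in\Q$ in rows two, three and six.

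\textbf{Main obstacle.} The delicate point is bookkeeping of labels, since the roles of $\alpha$ and $\beta$ within a group can be swapped. In Group~2 the condition $\alpha+\beta=\pi/3$ is symmetric, and commensurability of the sides of $R$ does not depend on labeling. So Proposition~\ref{prop:group-2-alg} applies equally with $\beta$ in place of $\alpha$, and we may apply it to whichever tile angle corresponds to $A$. In Group~1 the relation $3\alpha+2\beta=\pi$ fixes $\alpha$ uniquely, and the patterns of Theorem~\ref{thm:l1995-4.1} use that $\alpha$, so no ambiguity arises.

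I would also state explicitly that the row is selected by the pattern of the tiling, not by $T$'s angles alone. A given $T$ might satisfy two of the angle relations, and the rationality conclusion refers to the row matching the tiling actually present.
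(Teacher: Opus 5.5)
Your Steps 2 and 3 are correct, and your check that $R$ is not a right triangle fills in a point the paper leaves implicit. The gap is that your argument runs from the tiling to the case, while the proposition runs from the case to the tiling. Your closing remark then reads the proposition as ``every non-reptile tiling matches some row,'' which is weaker than what is stated.

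The proposition takes the first-column relation, with its labeling of $(A,B,C)$, as a \emph{hypothesis}. It asserts that every non-reptile tiling of such a $T$ has one of the patterns in the rows carrying that relation, with the given $A$ equal to $\alpha$ (or $2\alpha$ in rows 1 and 5). What you prove is that each pattern from Theorem~\ref{thm:l1995-4.1} falls under \emph{some} case, in a labeling you choose; that is Lemma~\ref{lem:T-pov-cases} plus the rationality translation. Nothing in your argument excludes, say, a triangle with $B=2A$ carrying a $(2\alpha,2\beta,\alpha+\beta)$ or $(2\alpha,\beta,\alpha+\beta)$ tiling. Nor does it exclude a tiling whose induced labeling differs from the given one. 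In such situations you would conclude nothing about $\sin(A/2)$ or $\sqrt3\tan(A/2)$ for the given $A$. Ruling these out is the actual content of the paper's proof (Propositions~\ref{prop:60-angle-only-if}--\ref{prop:group-2-last-only-if}), done case by case from the $\Q$-linear independence of $\alpha$ and $\beta$. Your remark that ``a given $T$ might satisfy two of the angle relations'' is false for the incommensurable triangles at hand, and seeing why is exactly the missing step.

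The step fits your framework in a few lines.
\begin{itemize}
\item Since $T$ has incommensurable angles, the rational relations $pA+qB+rC=0$ form a space of dimension at most one. Two independent ones together with $A+B+C=\pi$ would either make $A,B,C$ rational multiples of $\pi$, or put $(1,1,1)$ in their span, which is absurd.
\item In homogeneous form the four cases read $A+B-2C=0$, $2A-B=0$, $A+2B-2C=0$ and $4A+B-2C=0$. No two of these are proportional after any permutation of the variables. Only the first is proportional to a nontrivially permuted copy of itself, namely by swapping $A$ and $B$.
\item Hence the hypothesized case and labeling coincide with those your Step 1 extracts from the tiling, except possibly for swapping $A,B$ when $C=\pi/3$.
\end{itemize}
That residual swap still needs care in row 2. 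If the given $A$ is the angle $\alpha+2\beta$, your $\alpha\leftrightarrow\beta$ symmetry does not help, because $\alpha+2\beta$ is not a tile angle. The paper instead applies Proposition~\ref{prop:group-2-alg} to $T$ itself: $T$ has commensurable sides and $A+B=2\pi/3$, so $\sqrt3\tan(A/2)$ and $\sqrt3\tan(B/2)$ are both rational.

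With these additions your route would be a valid alternative to the paper's. It replaces four separate case analyses with a single uniqueness-of-relation argument, similar to the determinant argument the paper uses later in Theorem~\ref{thm:possibleR}.
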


The proof of this proposition will be
given by breaking it into the next four propositions.

\begin{prop}
\label{prop:60-angle-only-if}
Let a non-isosceles $T$ have angles $(A, B, \pi/3)$. Then $T$ has
a non-reptile tiling into $(\alpha, \beta, \gamma)$ only if one 
or both of the following hold:
\begin{enumerate}[{\rm (i)}]
\item $\sqrt 3 \tan(A/4)\in \Q$ and the angles of $T$ are $(2\alpha, 2\beta,
\alpha + \beta)$  
\item $\sqrt 3 \tan(A/2) \in \Q$ and the angles of $T$ are $(\alpha,
 \alpha+2\beta, \alpha+\beta)$.
\end{enumerate}
\end{prop}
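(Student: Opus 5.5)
The plan is to combine Theorem~\ref{thm:l1995-4.1} with the rationality results. The key observation is that once the angles are incommensurable, an angle equal to $\pi/3$ can only occur in two of the six shapes listed there.

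\textbf{Step 1: incommensurability.} Since $T$ is not isosceles and the tiling is not a reptiling, Theorem~\ref{thm:l1995-5.3} shows that $T$ has incommensurable angles. Then Lemma~\ref{lem:RandT} shows that $R$ also has incommensurable angles. In particular none of $\alpha,\beta,\gamma$ is a rational multiple of $\pi$.

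\textbf{Step 2: case analysis.} Theorem~\ref{thm:l1995-4.1} leaves only the Group~1 and Group~2 shapes. I will go through each and ask which angle of $T$ can equal $\pi/3$.
\begin{itemize}
\item In Group~1 we have $3\alpha+2\beta=\pi$. Setting any of $\alpha$, $2\alpha$, $2\beta$, $\beta$, $\alpha+\beta$ equal to $\pi/3$ and solving with this relation makes $\alpha$ (and $\beta$) a rational multiple of $\pi$. For example, $\alpha+\beta=\pi/3$ forces $\alpha=\pi/3$. This contradicts Step~1, so Group~1 is impossible.
\item In Group~2 we have $\alpha+\beta=\pi/3$. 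For the shape $(\alpha,2\alpha,3\beta)$, any angle equal to $\pi/3$ makes $\alpha$ rational. The same holds for $(\alpha,2\beta,2\alpha+\beta)$, where $2\alpha+\beta=\pi/3+\alpha$ would force $\alpha=0$. Both are excluded.
\item What remains are $(\alpha,\alpha+2\beta,\alpha+\beta)$ and $(2\alpha,2\beta,\alpha+\beta)$. In each, the only angle that can equal $\pi/3$ is $C=\alpha+\beta$, because any other angle equal to $\pi/3$ would again make $\alpha$ rational.
\end{itemize}

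\textbf{Step 3: rationality.} The tile has $\gamma=2\pi/3$, so it is not a right triangle. It is not similar to $T$ and has incommensurable angles. Theorem~\ref{thm:rationality} therefore gives commensurable sides for $R$. Proposition~\ref{prop:group-2-alg} then gives $\sqrt3\tan(\alpha/2)\in\Q$. Since the hypothesis $\alpha+\beta=\pi/3$ is symmetric in $\alpha$ and $\beta$, the same proposition with the roles swapped gives $\sqrt3\tan(\beta/2)\in\Q$.

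\textbf{Step 4: matching $A$.} The remaining point, and the main bookkeeping obstacle, is that $A$ may be either of the two angles other than $C$.
\begin{itemize}
\item Shape $(2\alpha,2\beta,\alpha+\beta)$: we have $A=2\alpha$ or $A=2\beta$, so $A/4$ is $\alpha/2$ or $\beta/2$. Step~3 then gives $\sqrt3\tan(A/4)\in\Q$, which is case~(i).
\item Shape $(\alpha,\alpha+2\beta,\alpha+\beta)$ with $A=\alpha$: we get $\sqrt3\tan(A/2)\in\Q$ directly.
\item Same shape with $A=\alpha+2\beta=2\pi/3-\alpha$: then $A/2=\pi/3-\alpha/2$. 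Put $s=\sqrt3\tan(\alpha/2)$. The subtraction formula gives
\[\sqrt3\tan(A/2)=\frac{3-s}{1+s},\]
which is rational whenever $s$ is. Since $\tan(\alpha/2)>0$, we have $s>0$, so $1+s\ne0$ and there is no division by zero. This gives case~(ii) in either labelling.
\end{itemize}
Together these establish the proposition.
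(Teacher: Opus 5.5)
Your proof is correct and follows the paper's argument: you obtain incommensurability from Theorem~\ref{thm:l1995-5.3} and Lemma~\ref{lem:RandT}, rule out Group~1 and the first two Group~2 shapes, and then use Theorem~\ref{thm:rationality} together with Proposition~\ref{prop:group-2-alg}. The one local difference is the case $A=\alpha+2\beta$, where the paper observes that $T$ inherits commensurable sides and applies Proposition~\ref{prop:group-2-alg} to $T$ itself (via $A+B=2\pi/3$), whereas your identity $\sqrt3\tan(A/2)=(3-s)/(1+s)$ works only with the tile; you also treat $A=2\beta$ explicitly, which the paper leaves to symmetry.
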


\begin{remark} This
covers the first two lines of the table 
in Proposition~\ref{prop:TtoR}.
\end{remark} 

\begin{proof} 
Suppose $T$ has such a tiling. By Theorem \ref{thm:l1995-5.3}, $T$ has
incommensurable angles, and then, by Lemma~\ref{lem:RandT},
$(\alpha,\beta,\gamma)$ are incommensurable as well. By
Theorem~\ref{thm:l1995-4.1} and the non-reptiling assumption, we have a
Group~1 or Group~2 tiling.
 
{\em Case~1}, we have a Group~1 tiling.  Then
 $R = (\alpha,\beta,\gamma)$ satisfies 
 $3\alpha + 2\beta = \pi$,
 and the angles $(A, B, \pi/3)$ are, 
 in some order, $(\alpha, 2\alpha, 2\beta)$
 or $(2\alpha, \beta,\alpha+\beta)$.  
 Then $\pi/3$ is either $\alpha$, $2\alpha$, $2 \beta$, $\beta$, or $\alpha+\beta$. 
  Each of the first four cases implies that one (and thus both) of
 $\alpha$ and $\beta$ is a rational multiple of $\pi$, which contradicts incommensurable angles. The only remaining possibility is $\alpha + \beta = \pi/3$, but then we have $3\alpha + 3\beta = \pi$, which is impossible if we also have $3\alpha + 2\beta = \pi$.
 Therefore the tiling is 
 not a Group~1 tiling.

{\em Case~2}, we have a Group~2 tiling.
 Then $R = (\alpha,\beta,\gamma)$ satisfies 
 $3\alpha + 3\beta = \pi$,  so $\gamma = 2\pi/3$, and some permutation of the angles of $T$, $(A, B, \pi/3)$, is equal to 
$(\alpha, 2\alpha, 3\beta)$, 
$(\alpha, 2\beta, 2\alpha+\beta)$,
$(\alpha, \alpha+\beta, \alpha+2\beta)$, or
 $(2\alpha, 2\beta, \alpha+\beta)$.
 
As before, one of the angles must  be equal to $\pi/3$. If this angle is one of $\{\alpha, 2\alpha, \beta, 2\beta, 3\beta\}$, then $\alpha$ and $\beta$ will both be rational multiples of $\pi$. Furthermore, we cannot have $\pi/3 = 2\alpha + \beta$ (and by the same logic, $\alpha + 2\beta$), since $\pi/3 = \alpha + \beta$. This rules out the first two cases, leaving only the last two cases with the $\pi/3$ angle assigned as $\alpha+\beta$. Then either $(A,B) = (\alpha,\alpha+2\beta)$, or
$(A,B) = (2\alpha, 2\beta)$, up to permutation.

By Theorem~\ref{thm:rationality}, $R$ has commensurable sides. Then $T$ has commensurable sides as well. 
By Proposition~\ref{prop:group-2-alg}, applied to $T$, we find $\sqrt{3} \cd \tan(A/2)\in \Q$ and also $\sqrt{3} \cd \tan(B/2)\in \Q$.%
\footnote{
It is also possible, instead of using Proposition~\ref{prop:group-2-alg}, to verify directly by trigonometry that if $\alpha+\beta = \pi/3$ then
$$\sqrt 3 \tan(\alpha/2) \in \Q \leftrightarrow \sqrt 3 \tan(\beta/2) \in \Q.$$ }
If $(A,B)=(\alpha,\alpha+2\beta)$ or $(A,B) = (\alpha+2\beta,\alpha)$, then we have (ii).
Suppose $(A,B) = (2\alpha, 2\beta)$. By Proposition~\ref{prop:group-2-alg}, applied to $R$, we find $\sqrt{3} \cd \tan(\alpha/2) \in \Q$, so $\sqrt{3}\tan(A/4) = \sqrt{3}\tan(\alpha/2) \in \Q$, and (i) holds. 
\end{proof}

\begin{prop}\label{prop:A-2A-only-if}
Let a non-isosceles $T$ have angles $(A,2A,\pi -3A)$. Then $T$ has
a non-reptile tiling into $(\alpha,\beta,\gamma)$ only if one of 
the following conditions is satisfied:
\begin{enumerate}[{\rm (i)}]
\item the angles of $T$ are $(\alpha,2\alpha,2\beta)$, and $\sin (A/2) \in \qq$.
\item the angles of $T$ are $(\alpha, 2\alpha, 3\beta)$, and $\sqrt 3  \tan (A/2)\in \qq$. 
\end{enumerate}
\end{prop}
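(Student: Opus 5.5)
I would follow the pattern of the proof of Proposition~\ref{prop:60-angle-only-if}. Suppose $T=(A,2A,\pi-3A)$ is non-isosceles and has a non-reptile tiling by $R=(\alpha,\beta,\gamma)$. By Theorem~\ref{thm:l1995-5.3}, $T$ has incommensurable angles, so $A/\pi\notin\Q$. By Lemma~\ref{lem:RandT}, $R$ is incommensurable as well. By Theorem~\ref{thm:l1995-4.1}, the tiling is then Group~1 or Group~2. The plan is to go through each listed shape of $T$ in these groups and match it against $(A,2A,\pi-3A)$, using only two facts. First, $\alpha$ and $\beta$ satisfy a rational linear relation ($3\alpha+2\beta=\pi$ or $\alpha+\beta=\pi/3$), so any further independent rational relation makes them commensurable, which is a contradiction. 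Second, $T$ is non-isosceles, so its three angles are distinct.

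\emph{Group~1} ($3\alpha+2\beta=\pi$). The shapes are $(\alpha,2\alpha,2\beta)$ and $(2\alpha,\beta,\alpha+\beta)$. I must decide which pair of angles is in ratio $2:1$. In $(\alpha,2\alpha,2\beta)$ the pair $(\alpha,2\alpha)$ gives $A=\alpha$, which is case (i). Any other pairing, such as $2\beta=2\alpha$, $2\beta=4\alpha$, $\alpha=4\beta$ or $2\alpha=4\beta$, is a second linear relation. Combined with $3\alpha+2\beta=\pi$, it forces $\alpha$ to be a rational multiple of $\pi$, or makes $T$ isosceles. In $(2\alpha,\beta,\alpha+\beta)$ I check all six ordered pairs, for instance $\beta=4\alpha$ or $\alpha+\beta=4\alpha$. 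Each is linearly independent of $3\alpha+2\beta=\pi$ (I only need to confirm that the determinant is nonzero), so each gives commensurable angles. One must also check that no relation is a multiple of the constraint; none is, since each is homogeneous. Hence only (i) survives in Group~1.

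\emph{Group~2} ($\alpha+\beta=\pi/3$). I treat the four shapes the same way. In $(\alpha,2\alpha,3\beta)$ the pair $(\alpha,2\alpha)$ gives $A=\alpha$, which is case (ii). Every other $2:1$ pairing within the four shapes is a homogeneous relation independent of $\alpha+\beta=\pi/3$, so it forces commensurability. Examples are $2\alpha+\beta=2\alpha$ (which gives $\beta=0$), $\alpha+2\beta=2\alpha$, and $2\beta=2(\alpha+\beta)$. This is a finite check, about $4\times 6$ ordered pairs, and it is the bulk of the work, although it is routine.

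\emph{Rationality.} By Theorem~\ref{thm:rationality}, $R$ has commensurable sides: it is not similar to $T$, it is incommensurable, and it is not a right triangle, since $\gamma=\pi/2$ would make its angles commensurable. In case (i), Proposition~\ref{prop:group-1-alg} gives $\sin(\alpha/2)=\sin(A/2)\in\Q$. In case (ii), Proposition~\ref{prop:group-2-alg} applied to $R$ gives $\sqrt3\tan(\alpha/2)=\sqrt3\tan(A/2)\in\Q$. The main obstacle is only the bookkeeping of the case enumeration: I must make sure that no pairing coincides with the group constraint itself, and that degenerate matches are excluded by non-isoscelesness rather than overlooked.
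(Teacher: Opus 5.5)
Your proposal is correct and follows essentially the same route as the paper. You reduce to Group~1 or Group~2 via Theorem~\ref{thm:l1995-4.1}, show that any matching of $(A,2A)$ other than $(\alpha,2\alpha)$ gives a nontrivial rational relation between $\alpha$ and $\beta$ and hence commensurable angles, and then get the rationality conditions from Theorem~\ref{thm:rationality} with Propositions~\ref{prop:group-1-alg} and~\ref{prop:group-2-alg}. Your observation that every nontrivial homogeneous relation $p\alpha+q\beta=0$ is incompatible with the group constraint is a cleaner, uniform version of the paper's case-by-case ``$\alpha/\beta$ would be rational'' checks, and your check that $R$ is not a right triangle makes explicit a hypothesis of Theorem~\ref{thm:rationality} that the paper leaves implicit.
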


\begin{remark} This 
covers the third and fourth lines of the table 
in Proposition~\ref{prop:TtoR}.
\end{remark}

\begin{proof}
Suppose $T$ has such a tiling.  By Theorem \ref{thm:l1995-5.3} and
Lemma~\ref{lem:RandT}, $(\alpha,\beta,\gamma)$ are incommensurable. By
Theorem~\ref{thm:l1995-4.1} and the non-reptiling assumption, we have a Group~1
or Group~2 tiling.

{\em Case~1}, we have a Group~1 tiling.  Then $\be =(\pi -3\al )/2$ and 
$$\{ A,2A, \pi -3A\} =\{ \al ,2\al, 2\be \} \ \text{or} \ \{2\alpha, \beta,
\alpha + \beta\}.$$
This implies $A=\al$. Indeed, otherwise  $\al /\be \in \{ 1,2, \tfrac 1 2,
\tfrac 1 3 , \tfrac 1 4\}$ would hold, and then $\al ,\be$ would be rational
multiples of $\pi$, which is impossible.  Then by Theorem~\ref{thm:rationality}
and Proposition~\ref{prop:group-1-alg}, $\sin(\alpha/2) \in \Q$. That is, (i)
holds.

{\em Case~2}, we have a Group~2 tiling. Then $\ga =2\pi /3$ and 
$$\{ A,2A, \pi -3A\} =
\{\alpha, 2\alpha, 3\beta\}, \{\alpha, 2\beta, 2\alpha + \beta\}, \{\alpha, \alpha+\beta, \alpha+2\beta\}, \text{or }  \{2\alpha, 2\beta, \alpha+\beta\}.
$$
We will show that $A= \al$, by considering each of the four possible values of $\{ A,2A, \pi -3A\}$.
First, suppose $\{ A,2A, \pi -3A\} = \{\al, 2\al,3\beta\}.$
Then $$(A,2A,\pi-3A) = (\al, 2\al, 3\beta),$$ since every other permutation of the angle either makes
$\alpha/\beta$ rational, or makes $A=2\al$ and $2A = \al$, which is impossible.

Second, suppose $\{ A,2A, \pi -3A\}=\{\alpha, 2\beta, 2\alpha + \beta\}$.  No two of those
last three angles have a rational ratio, so this case is impossible.

Third, suppose $\{ A,2A, \pi -3A\}= \{\alpha, \alpha+\beta, \alpha+2\beta\}$. No two of those
last three angles have a rational ratio, so this case is impossible.

Fourth, suppose $\{ A,2A, \pi -3A\}= \{2\alpha, 2\beta, \alpha+\beta\}$. No two of those
last three angles have a rational ratio, so this case is impossible.

The only possibility not ruled out was $( A,2A, \pi -3A) = (\alpha, 2\alpha, 3 \beta)$.
By Theorem~\ref{thm:rationality} and Proposition~\ref{prop:group-2-alg},
$\sqrt 3 \tan(\alpha/2) \in \Q$. That is, (ii) holds.
\end{proof}

\begin{prop}\label{prop:group-1-last-only-if}
Let a non-isosceles $T$ have angles $(A, B, A/2+B)$. Then $T$ has
a non-reptile tiling into $(\alpha,\beta,\gamma)$ only if the 
angles of $T$ are $(2\alpha,\beta,\alpha+\beta)$ and $\sin(A/4) = \sin(\alpha/2)
\in \mathbb{Q}$.
\end{prop}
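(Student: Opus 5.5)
We follow the template of Propositions~\ref{prop:60-angle-only-if} and \ref{prop:A-2A-only-if}. Suppose $T$, with angles $(A,B,A/2+B)$, has a non-reptile tiling into $R=(\alpha,\beta,\gamma)$. By Theorem~\ref{thm:l1995-5.3}, $T$ has incommensurable angles, and then so does $R$ by Lemma~\ref{lem:RandT}. By Theorem~\ref{thm:l1995-4.1}, the tiling is of Group~1 or Group~2. A useful first observation is that since $A+B+C=\pi$ and $C=A/2+B$, we get $3A/2+2B=\pi$. This is a linear relation with coefficients $3/2$ and $2$ between two angles of $T$.

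\emph{Group 2 is impossible.} Here $\alpha+\beta=\pi/3$, so every angle of $T$ is a combination $p\alpha+q\beta$ with $p,q$ nonnegative integers. Using $\beta=\pi/3-\alpha$, each angle becomes an affine function $c\pi+d\alpha$ with $c\in\Q$. Since $\alpha/\pi\notin\Q$, the relation $3A/2+2B=\pi$ holds only if the $\alpha$-coefficients cancel and the constant terms sum to $\pi$.

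I would check each of the four Group~2 triples and each ordered choice of $(A,B)$ from it. The permissible $(A,B)$ pairs are those that leave $C=A/2+B$ as the third angle. For example, for $(2\alpha,2\beta,\alpha+\beta)$ the condition $C=A/2+B$ forces one of $\alpha+\beta=\alpha+2\beta$, $2\beta=\alpha+\beta$, and similar equations. Each of these yields either $\beta=0$ or a rational ratio $\alpha/\beta$, which is a contradiction. The cleanest uniform method is to write $C-A/2-B=0$ as $x\alpha+y\beta=0$ with rational coefficients $x,y$, or, using $\alpha+\beta=\pi/3$, as $x'\alpha=c\pi$. This forces commensurability unless $x=y=0$ identically, and a direct check rules out $x=y=0$ in every case. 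This case-bashing over at most $4\cdot 6$ permutations is routine but is the step where care is needed.

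\emph{Group 1.} Here $\beta=(\pi-3\alpha)/2$, and the triples are $(\alpha,2\alpha,2\beta)$ and $(2\alpha,\beta,\alpha+\beta)$. For each permutation assigned to $(A,B,C)$, the condition $C=A/2+B$ again becomes a linear relation $x\alpha+y\beta=0$. Commensurability rules this out unless the relation is a consequence of $3\alpha+2\beta=\pi$, that is, unless it is an identity in $\alpha$ after substituting for $\beta$. We need to show that the only surviving assignment is $(A,B,C)=(2\alpha,\beta,\alpha+\beta)$. This one does work, since $A/2+B=\alpha+\beta=C$ identically. For the triple $(\alpha,2\alpha,2\beta)$, any equation of the form (one angle) $=$ (half of another) $+$ (a third) becomes, after substitution, a nontrivial equation $c\pi=d\alpha$ with $d\neq 0$, which contradicts incommensurability. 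The other permutations of $(2\alpha,\beta,\alpha+\beta)$ are handled the same way, for instance $\beta=\alpha+(\alpha+\beta)$ gives $\alpha=0$.

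\emph{Rationality.} Finally, $R$ is not similar to $T$. It is not a right triangle: otherwise $\gamma=\pi/2$ (or $\alpha$ or $\beta$ would equal $\pi/2$) would make all the angles commensurable via $3\alpha+2\beta=\pi$. It has incommensurable angles. So Theorem~\ref{thm:rationality} gives that $R$ has commensurable sides, and Proposition~\ref{prop:group-1-alg} then gives $\sin(\alpha/2)\in\Q$. Since $A=2\alpha$, this is $\sin(A/4)\in\Q$. The main obstacle is only the exhaustive but elementary permutation check in the two group cases.
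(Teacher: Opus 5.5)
Your argument is correct and is essentially the paper's. Incommensurability (that is, $\mathbb{Q}$-independence of $\alpha,\beta$) pins down the assignment, and Theorem~\ref{thm:rationality} with Proposition~\ref{prop:group-1-alg} then gives $\sin(\alpha/2)\in\mathbb{Q}$; your explicit check that $R$ is not a right triangle verifies a hypothesis the paper leaves implicit. The paper only streamlines the case analysis: writing $A=p\alpha+q\beta$, $B=s\alpha+t\beta$ with $p,q,s,t\ge 0$, your relation $3A+4B=2\pi$ becomes $(3p+4s)\alpha+(3q+4t)\beta=6\alpha+4\beta$ in Group~1 or $6\alpha+6\beta$ in Group~2. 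This forces $(A,B)=(2\alpha,\beta)$, respectively $B=0$, at once, and it replaces the Group~2 permutation check that you asserted but did not actually carry out.
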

\begin{remark} This 
covers the fifth line of the table 
in Proposition~\ref{prop:TtoR}.
\end{remark}

\begin{proof}
Suppose $T$ has such a tiling. By Theorem \ref{thm:l1995-5.3} and
Lemma~\ref{lem:RandT}, $(\alpha,\beta,\gamma)$ are incommensurable. By
Theorem~\ref{thm:l1995-4.1} and the non-reptiling assumption, we have a Group~1
or Group~2 tiling, where $A = p \alpha + q \beta$, $B = s\alpha + t\beta$, and
$p,q,s,t$ are nonnegative integers. Then
\begin{equation}
\label{eq:group-1-last}
    2\pi = 2(A + B + (A/2+B)) = (3p + 4s) \alpha + (3q+4t) \beta.
\end{equation} 
Since $\alpha$ and $\beta$ are linearly independent over $\Q$ (else we
would have commensurable angles), any other expression of $2\pi = x\alpha +
y\beta$ must force $3p+4s = x$ and $3q+4t = y$. We proceed in two cases: 

{\em Case~1}, the tiling is in Group~1. Then $3\alpha + 2\beta = \pi$, so
$2\pi = 6\alpha + 4\beta$ and by \eqref{eq:group-1-last} we must have $3p + 4s = 6$ and $3q+4t = 4$. This forces $(p, q) = (2, 0)$ and $(s,t) = (0, 1)$. In other words, $A = 2\alpha$, $B=\beta$, $C = \alpha+\beta$. By Theorem~\ref{thm:rationality} and Proposition~\ref{prop:group-1-alg}, $\sin(\alpha/2) = \sin(A/4)$ must be rational.

{\em Case~2}, the tiling is in Group~2. Then $3\alpha + 3\beta = \pi$,
so $2\pi = 6\alpha + 6\beta$, and by \eqref{eq:group-1-last} we have $3p+4s = 6$ and $3q+4t=6$. This forces $s = t = 0$, which is impossible since then $B = 0$.
\end{proof}

\begin{prop}\label{prop:group-2-last-only-if}
Let a non-isosceles $T$ have angles $(A,B,2A+B/2)$. Then $T$ 
has a non-reptile tiling into $(\alpha, \beta, \gamma)$ only if 
the angles of $T$ are $(\alpha,2\beta,2\alpha+\beta)$ and 
$\sqrt{3}\tan(A/2) \in \mathbb{Q}$. 
\end{prop}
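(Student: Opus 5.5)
I will follow the template of the proof of Proposition~\ref{prop:group-1-last-only-if}. Suppose $T$ has such a tiling. By Theorem~\ref{thm:l1995-5.3} and Lemma~\ref{lem:RandT}, $(\alpha,\beta,\gamma)$ are incommensurable. Then Theorem~\ref{thm:l1995-4.1} and the non-reptiling assumption give a Group~1 or Group~2 tiling. In either case every angle of $T$ is a nonnegative integer combination of $\alpha$ and $\beta$, and these are linearly independent over $\Q$. So I write $A = p\alpha+q\beta$ and $B = s\alpha + t\beta$ with $p,q,s,t\ge 0$ integers. Doubling the angle sum gives
$$2\pi = 2\bigl(A+B+2A+B/2\bigr) = (6p+3s)\alpha + (6q+3t)\beta .$$
By linear independence, the coefficients must match those of any other expression of $2\pi$ in terms of $\alpha,\beta$.

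\emph{Case~1 (Group~1).} Here $3\alpha+2\beta=\pi$, so $2\pi = 6\alpha+4\beta$. Hence $6p+3s=6$ and $6q+3t=4$. The second equation has no integer solution, since $3$ does not divide $4$. So Group~1 is impossible.

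\emph{Case~2 (Group~2).} Here $3\alpha+3\beta=\pi$, so $2\pi = 6\alpha+6\beta$. Hence $6p+3s=6$ and $6q+3t=6$, that is, $2p+s=2$ and $2q+t=2$. This leaves $(p,s)\in\{(1,0),(0,2)\}$ and $(q,t)\in\{(1,0),(0,2)\}$. We need $A,B>0$. If $(p,s)=(1,0)$ and $(q,t)=(1,0)$, then $B=0$, which is excluded. If $(p,s)=(0,2)$ and $(q,t)=(0,2)$, then $A=0$, which is excluded. The case $(p,s)=(1,0)$, $(q,t)=(0,2)$ gives $A=\alpha$, $B=2\beta$, $C=2\alpha+\beta$, which is the claimed form. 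The case $(p,s)=(0,2)$, $(q,t)=(1,0)$ gives $A=\beta$, $B=2\alpha$. This is the same configuration with the names $\alpha$ and $\beta$ exchanged, which is legitimate because the Group~2 relation $\gamma=2\pi/3$ is symmetric in $\alpha,\beta$. After relabeling, the angles of $T$ are $(\alpha,2\beta,2\alpha+\beta)$.

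For the rationality condition, I will check that $R$ is not a right triangle ($\gamma=2\pi/3$ and $\alpha+\beta=\pi/3$), is not similar to $T$, and has incommensurable angles. Then Theorem~\ref{thm:rationality} shows $R$ has commensurable sides. Proposition~\ref{prop:group-2-alg} with $\alpha+\beta=\pi/3$ then gives $\sqrt3\tan(\alpha/2)=\sqrt3\tan(A/2)\in\Q$. The step I expect to need the most care is the relabeling symmetry in the last case: Proposition~\ref{prop:group-2-alg} must apply to whichever angle plays the role of $A$. That is fine, since the proposition is symmetric in $\alpha,\beta$ (see its footnote). Everything else is the routine coefficient matching.
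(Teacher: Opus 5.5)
Your proof is correct and follows the paper's argument essentially line for line. The paper uses the same coefficient matching $2\pi=(6p+3s)\alpha+(6q+3t)\beta$ against $6\alpha+4\beta$ (Group~1, impossible since $3$ does not divide $4$) and against $6\alpha+6\beta$ (Group~2, leaving the two solutions that are symmetric in $\alpha,\beta$), then applies Theorem~\ref{thm:rationality} and Proposition~\ref{prop:group-2-alg}; your explicit check of the three hypotheses of Theorem~\ref{thm:rationality} is a small addition that the paper leaves implicit.
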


\begin{remark} This 
covers the sixth (and last) line of the table 
in Proposition~\ref{prop:TtoR}.
\end{remark}

\begin{proof}
Suppose $T$ has such a  tiling. By Theorem \ref{thm:l1995-5.3} and
Lemma~\ref{lem:RandT}, $(\alpha,\beta,\gamma)$ are incommensurable. By
Theorem~\ref{thm:l1995-4.1} and the non-reptiling assumption, we have a Group~1
or Group~2 tiling, where $A = p \alpha + q \beta$, $B = s\alpha + t\beta$,
and $p,q,s,t$ are nonnegative integers. Then
\begin{equation}
\label{eq:group-2-last}
    2\pi = 2(A + B + (2A+B/2)) = (6p + 3s) \alpha + (6q+3t) \beta.
\end{equation} 
Since $\alpha$ and $\beta$ are linearly independent with each other (else we
would have commensurable angles), any other expression of $2\pi = x\alpha +
y\beta$ must force $6p+3s = x$ and $6q+3t = y$. We proceed in two cases: 

{\em Case~1}, the tiling is in Group~1. Then $3\alpha + 2\beta = \pi$, so
$2\pi = 6\alpha + 4\beta$ and by \eqref{eq:group-2-last} we must have $6p + 3s = 6$ and $6q+3t = 4$. The second equation cannot be satisfied, so this case is impossible.

{\em Case~2}, the tiling is in Group~2. Then $3\alpha + 3\beta = \pi$, so
$2\pi = 6\alpha + 6\beta$, and by \eqref{eq:group-2-last} we have $6p+3s = 6$
and $6q+3t=6$. 
Each of these equations has exactly two solutions in nonnegative
integers, namely $(p,s) = (1,0)$ or $(0,2)$, and $(q,t) = (1,0)$ or $(0,2)$.
Taking $(p,s) = (q,t) = (1,0)$ gives $B = 0$, and taking $(p,s) = (q,t) = (0,2)$
gives $A = 0$; both are impossible.  Hence we must either have:
\begin{itemize}
    \item $(p,q) = (1,0), (s,t) = (0, 2)$, giving $(A, B, 2A+B/2) = (\alpha, 2\beta, 2\alpha+\beta)$.
    \item $(p,q) = (0,1), (s,t) = (2,0)$, giving $(A, B, 2A + B/2) = (\beta, 2\alpha, 2\beta + \alpha)$, same as the previous by symmetry.
\end{itemize}
By Theorem~\ref{thm:rationality} and Proposition~\ref{prop:group-2-alg}, we 
have $\sqrt 3 \tan(\alpha/2) \in \Q$ and $\sqrt{3} \cdot \tan (\beta /2) \in \Q$,
and thus $\sqrt{3} \cdot \tan (A/2) \in \Q$.
\end{proof}

\begin{mainrestatement}[Left to right direction]
Suppose triangle $T$ admits a non-square tiling. Then $T$ satisfies at least
one of the following conditions,  where $(A,B,C)$ are the angles of $T$ in some
order: 
 \begin{enumerate}[{\rm (1)}]
    \item $A=B$, i.e. $T$ is an isosceles triangle (including equilateral);
    \item $C =  \pi / 2$ and the legs
    of the right triangle $T$ are in integer ratio $M/K$, \\ where $M^2 + K^2$ is not a square;
    \item $(A,B,C)  = (\pi/6, \pi/2, \pi/3)$;
    \item $C = \pi/3$, with $\sqrt{3}\tan(A/2)$ rational;
    \item $B = 2A$,  with $\sqrt{3}\tan(A/2)$ rational;
    \item $B = 2A$, with $\sin(A/2)$ rational;
    \item  $C = A/2 + B$, with $2 \sin(A/4) = M/K$, where \\
    $2K^2-M^2$ is not a square and $K,M \in \Z$;
    \item 
    $C = 2A+B/2$, with $\sqrt{3}\tan(A/2)$ rational.
    \end{enumerate}
\end{mainrestatement}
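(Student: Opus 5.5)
We argue by cases on whether the given non-square tiling is a reptiling. Suppose $T$ admits a tiling into $N$ tiles, with $N$ not a square. If $T$ is isosceles, (1) holds and we are done, so we assume throughout that $T$ is not isosceles.

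\emph{The tiling is a reptiling.} Theorem~\ref{thm:reptile} applies with $N$ not a square, leaving two possibilities. If $N=M^2+K^2$, then $T$ is a right triangle with legs in ratio $M/K$, and $M^2+K^2$ is not a square, which is (2). If $N=3M^2$, then $T$ is the $(\pi/6,\pi/2,\pi/3)$ triangle, which is (3). This case needs no angle hypotheses, so it covers commensurable and incommensurable $T$ alike.

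\emph{The tiling is not a reptiling.} By Theorem~\ref{thm:l1995-5.3}, $T$ has incommensurable angles. Lemma~\ref{lem:T-pov-cases} puts $T$ into one of the four shapes $C=\pi/3$, $B=2A$, $C=A/2+B$, or $C=2A+B/2$. Proposition~\ref{prop:TtoR}, assembled from Propositions~\ref{prop:60-angle-only-if}--\ref{prop:group-2-last-only-if}, then identifies the angles of $T$ in terms of the tile and gives the corresponding rationality condition. We read off the conclusion row by row.

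\begin{enumerate}[(a)]
\item \emph{Row $C=\pi/3$, $T=(\alpha,\alpha+2\beta,\alpha+\beta)$.} This gives $\sqrt3\tan(A/2)\in\Q$ directly, which is (4).
\item \emph{Row $C=\pi/3$, $T=(2\alpha,2\beta,\alpha+\beta)$, with $\sqrt3\tan(A/4)\in\Q$.} Set $u=\tan(A/4)$, so $\sqrt3u\in\Q$. The double-angle formula gives
\[
\sqrt3\tan(A/2)=\frac{2\sqrt3 u}{1-u^2}=\frac{2\sqrt3u}{1-(\sqrt3u)^2/3}\in\Q,
\]
since both numerator and denominator are rational. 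Hence (4) holds here as well.
\item \emph{Rows $B=2A$.} These yield (5) or (6) immediately.
\item \emph{Row $C=2A+B/2$.} This yields (8) immediately.
\item \emph{Row $C=A/2+B$.} Here we only know $\sin(A/4)\in\Q$. Writing $2\sin(A/4)=M/K$ with $M,K\in\Z$, it remains to show that $2K^2-M^2$ is not a square.
\end{enumerate}

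Case (e) is the main obstacle, because it is the only place where the hypothesis that $N$ is not a square is used. We plan to compute $N$ as the ratio of the area of $T$ to the area of $R$. By Theorem~\ref{thm:rationality} and Proposition~\ref{prop:group-1-alg}, scale $R$ so its sides are proportional to
\[
(a,b,c)=\bigl(2s,\;1-4s^2,\;1\bigr),\qquad s=\sin(\alpha/2)=M/(2K).
\]
Then $T$ has angles $(2\alpha,\beta,\alpha+\beta)$. Since $\gamma=\pi-\alpha-\beta$, the angle $\alpha+\beta$ of $T$ has sine $\sin\gamma$, so by the law of sines $T$ is similar to a triangle whose side opposite $\alpha+\beta$ is $c$. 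We then express the sides of $T$ in terms of $s$ and form
\[
N=\frac{\text{side}(T)\cdot\text{side}(T)\cdot\sin(\text{included angle})}{a\,c\,\sin\beta}.
\]

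We expect $N$ to be a rational function of $s$ in which a factor $\cos(\alpha/2)=\sqrt{1-s^2}$ cancels, leaving something like $N=(\text{rational})^2\cdot(2-4s^2)$, that is $\propto 2K^2-M^2$, up to a square factor. If so, $N$ is a square exactly when $2K^2-M^2$ is a square, which gives (7) when $N$ is not a square. Should the area ratio instead have a different non-square factor, we would recompute it using the explicit Group~1 geometry, with $T$'s side opposite $\alpha+\beta$ equal to $c\cdot k$, and compare with the statement. A cruder alternative is to compare the side $b\,\sin(\alpha+\beta)/\sin\beta$ via $\sin(\alpha+\beta)=\cos(\alpha/2)$.
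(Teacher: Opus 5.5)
Your proposal is correct. The reptile case and rows (a)--(d) agree with the paper, which simply reads them off the table in Proposition~\ref{prop:TtoR}; your double-angle step in (b) is the observation made at the start of Section~\ref{sec:uniqueness}. The real difference is row (e). The paper computes nothing there: it invokes Theorem~4 of \cite{beeson-isosceles}, the ``tiling equation,'' which gives integers $K,M$ with $2\sin(\al/2)=M/K$ and \emph{exactly} $N=2K^2-M^2$. You instead determine $N$ only up to a rational square, which is the same device the paper uses in Section~\ref{sec:existence} for the other rows. Your expectation is right, so the fallback plans are unnecessary. $T$ has sides proportional to $\bigl(4s(1-2s^2),\,1-4s^2,\,1\bigr)$. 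Write $T=\lambda\cdot(\text{this})$ and $R=\mu(2s,1-4s^2,1)$. In each triangle use the sides opposite $\be$ and opposite $\al+\be$ (respectively $\ga$), which enclose $2\al$ (respectively $\al$). Then
\[
N=\Bigl(\frac{\lambda}{\mu}\Bigr)^2\frac{\sin 2\al}{\sin\al}=\Bigl(\frac{\lambda}{\mu}\Bigr)^2(2-4s^2)=\Bigl(\frac{\lambda}{\mu K}\Bigr)^2(2K^2-M^2).
\]
As a check, $s=1/4$ gives tile $(2,3,4)$, $T=(14,12,16)$, and $N=16\cdot\tfrac{7}{4}=28$. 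The one step you must make explicit is $\lambda/\mu\in\Q$; without it the ``(rational)$^2$'' factor is unfounded and the argument proves nothing. It holds because each side of $T$ is a union of tile edges, so its length is $pa+qb+rc$ with nonnegative integers $p,q,r$. Since $s\in\Q$, that length is a rational multiple of $\mu$. With this, the integer $N$ is a square if and only if $2K^2-M^2$ is, for every representation $M/K$. The paper's route yields the exact tile count (together with $K\mid M^2$), which it uses in the remark after Proposition~\ref{prop:group-1-last-existence} to describe the whole family of such tilings. Your route is self-contained, avoids the external theorem, and makes independence from the choice of $M/K$ automatic.
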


\begin{proof}  Let $T$ be tiled by $N$ tiles, where $N$ is not a square.
Suppose first that it is a reptiling. By Theorem~\ref{thm:reptile}, since $N$ is not a square,
one of (2) or (3) is true. Thus we can
assume that we do not have a reptiling.  If $T$ is isosceles, condition (1) holds; so
we may assume $T$ is not isosceles. 
By Lemma~\ref{lem:T-pov-cases}, we
conclude that at least one of the remaining cases must hold. Reading from
the table in Proposition~\ref{prop:TtoR}, we see that each case implies one of
the conditions in the problem statement:
\begin{enumerate}
\renewcommand{\labelenumi}{(\roman{enumi})}
    \item $C = \pi/3$, implying (4);
    \item $B = 2A$,  implying (5) or (6);
    \item $C = A/2 + B$, implying the first part of (7); 
    \item $C = 2A + B/2$, implying (8). 
\end{enumerate}
It remains to check the second part of (7). Assume $C = A/2 + B$.  
  Let $(a,b,c)$ be the sides of the tile and $(\alpha,\beta,\gamma)$ its angles.
Proposition~\ref{prop:TtoR} also tells us that the angles of $T$ are $(2\alpha,\beta, \alpha+\beta)$.
According to Theorem~4 of \cite{beeson-isosceles}, there exist integers $K,M$ such 
that 
$$ 2 \sin \frac \alpha 2 = \frac M K \text{\ and \ } N = 2K^2- M^2.$$
 Since $A = 2 \alpha$, we have $2\sin(A/4) = M/K$. That completes the proof of (7).
\end{proof}

\section{Some number theory}

In order to prove the ``if'' part of Theorem \ref{thm:main} we will show that

\begin{enumerate}
\item tilings with the parameters given in the table of Proposition \ref{prop:TtoR} do exist;
\item there are rational parametrizations for the number of tiles required,
per case of tiling type;
\item those rational functions do not take square values.
\end{enumerate}
The first two parts will be accomplished by carefully combining previous
results. The third part comes down to showing certain Diophantine equations
have no rational solutions, and this is the content of the present section.

First, we cite a useful transformation from \cite{cohen2007number}.
 
\begin{lem}[Corollary 7.2.2, p.~477, \cite{cohen2007number}]
\label{lem:7.2.2}
Each rational solution $(t,s)$ to $$s^2 = t^4 + at^2 +b$$ gives a rational solution $(x,y)$ to $$y^2 = x^3 -2a x^2 +(a^2 -4b)x,$$
via the transformation
\begin{eqnarray*}  
x&=& 2t ^2 -2s +a,  \\
y&=&2t(2t^2 -2s+a).
\end{eqnarray*}
\end{lem}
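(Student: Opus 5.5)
The plan is to check directly that the given $(x,y)$ satisfies the cubic. The only real work is to note that the hypothesis $s^2=t^4+at^2+b$ lets us eliminate $s^2$, and every power of $s$ above the first.

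Put $u = 2t^2-2s+a$. Then $x=u$ and $y=2tu$, so $y^2 = 4t^2u^2$. Meanwhile the right side is $x^3-2ax^2+(a^2-4b)x = u\,(u^2-2au+a^2-4b)$. If $u=0$, both sides vanish and $(x,y)=(0,0)$ is a rational point, so there is nothing to prove. Otherwise we may cancel one factor of $u$, and it suffices to prove
$$4t^2u = u^2-2au+a^2-4b, \quad\text{i.e.}\quad (u-a)^2 - 4t^2u - 4b = 0 .$$

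Now substitute $u-a = 2t^2-2s$. This gives $(u-a)^2 = 4t^4-8t^2s+4s^2$. We also have $4t^2u = 8t^4-8t^2s+4at^2$. Subtracting,
$$(u-a)^2-4t^2u-4b = 4s^2-4t^4-4at^2-4b,$$
and this vanishes precisely by the hypothesis $s^2=t^4+at^2+b$.

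Rationality is immediate, since $x$ and $y$ are polynomials with integer coefficients in the rational quantities $t$, $s$ and $a$. No real obstacle is expected. The only points needing care are the division by $u$ (handled by the case $u=0$) and the sign of $s$: either choice of sign gives a valid point, so the map is well defined on each solution $(t,s)$.
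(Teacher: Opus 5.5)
Your verification is correct and is exactly the ``routine algebra'' the paper invokes. The case split on $u=0$ is harmless but not needed, since you show $(u-a)^2-4t^2u-4b$ vanishes identically under the hypothesis. The paper additionally records the inverse for $x\neq 0$, namely $t=y/(2x)$ and $s=(y/(2x))^2-(x-a)/2$, which is what the later propositions use when pulling rational points of the cubic back to solutions $(t,s)$.
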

\begin{proof}
The algebra is routine. If $x \neq 0$, we have the inverse 
\begin{eqnarray*} 
t&=&\frac{y}{2x}, \\
s&=&\left( \frac{y}{2x} \right) ^2 -(x-a)/2.
\end{eqnarray*}
\end{proof}

\begin{prop} \label{prop:nt-squares}
If $a$ and $b$ are positive integers, then $a^2 + b^2 + ab$ and $a(a+b)$ cannot
both be squares. 
\end{prop}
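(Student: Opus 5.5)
We reduce the statement to finding the rational points on an elliptic curve, using Lemma~\ref{lem:7.2.2}. Suppose, for contradiction, that $a,b$ are positive integers with $a(a+b)=\square$ and $a^2+ab+b^2=\square$. Divide both by $a^2$ and put $t=b/a\in\Q_{>0}$. Then $1+t$ and $1+t+t^2$ are both squares of rationals. Write $1+t=u^2$ with $u\in\Q$; we may take $u>1$, since $t>0$. Substituting $t=u^2-1$ into the second square gives
$$s^2 = 1+(u^2-1)+(u^2-1)^2 = u^4-u^2+1$$
for some rational $s$. This quartic has the form $s^2=u^4+\mathfrak{a}u^2+\mathfrak{b}$ with $\mathfrak{a}=-1$ and $\mathfrak{b}=1$.

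Apply Lemma~\ref{lem:7.2.2} with these parameters. It yields a rational point on
$$E:\ y^2 = x^3+2x^2-3x = x(x-1)(x+3),$$
with $x=2u^2-2s-1$ and $y=2u(2u^2-2s-1)$. The degenerate case $x=0$ needs a separate check. It forces $s=u^2-\tfrac12$, so $s^2=u^4-u^2+\tfrac14\neq u^4-u^2+1$, which is impossible. Hence $x\neq 0$, and $u=y/(2x)$ is recovered by the inverse map in the lemma.

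Next we determine $E(\Q)$. The curve has full rational $2$-torsion, namely $(0,0)$, $(1,0)$ and $(-3,0)$. A short search also finds the points $(-1,\pm2)$ and $(3,\pm6)$. One checks these have order $4$: for example, doubling $(3,6)$ gives a $2$-torsion point. So $E(\Q)_{\rm tors}\cong \Z/2\times\Z/4$, of order $8$ including the point at infinity. The key claim is that $E$ has rank $0$, so that $E(\Q)$ consists of exactly these $8$ points.

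To prove the rank is $0$ we would do a complete $2$-descent, which is feasible because all three roots $0,1,-3$ are rational. For a rational point with $x\ne0,1,-3$, write $x=d_1\square$, $x-1=d_2\square$, $x+3=d_1d_2\square$, where $d_1,d_2$ are squarefree divisors of $2\cdot 3$ (possibly negative). We would then rule out, by local solubility at $2$, at $3$ or over $\mathbb R$, every pair $(d_1,d_2)$ not accounted for by the images of the torsion points. Alternatively, we could identify $E$ in Cremona's tables (conductor $24$) and cite rank $0$ with torsion $\Z/2\times\Z/4$. This descent is the main obstacle; everything else is routine algebra.

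Finally we map the $8$ points back. The points with $y=0$ give $x=0$ (excluded above) or $u=0$, which would mean $t=-1$. The point at infinity does not come from any finite $(u,s)$. The points $(-1,\pm2)$ and $(3,\pm6)$ give $u=y/(2x)=\pm1$, hence $t=u^2-1=0$, i.e.\ $b=0$. None of these has $t>0$, contradicting $b>0$. Therefore $a^2+ab+b^2$ and $a(a+b)$ cannot both be squares.
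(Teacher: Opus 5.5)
Your argument is correct and is essentially the paper's second (elliptic-curve) proof. The paper first reduces to coprime $a,b$ with $a=m^2$, $a+b=n^2$; your $u$ is its $t=n/m$. It then dehomogenizes to $s^2=t^4-t^2+1$, applies Lemma~\ref{lem:7.2.2} to reach $y^2=x^3+2x^2-3x$, grounds rank zero in the same way (a cited descent plus LMFDB 24.a4), and pulls back the eight torsion points exactly as you do.

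Working directly with the rational square $1+b/a=u^2$ lets you skip the coprimality step. Your explicit exclusion of $x=0$ makes precise a point the paper leaves implicit.
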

\begin{proof} Let $d= \gcd(a,b)$. Since replacing $a$ by $a/d$ and $b$
by $b/d$ does not affect the conclusion of the Proposition, we may assume
that $a$ and $b$ are coprime.
  
Let $a^2 + b^2 + ab = c^2$. Suppose $a(a+b)$ were a square. Since $a$ and $b$ are coprime, $\gcd(a, a+b) = \gcd(a, b) = 1$. This means $a$ and $(a+b)$ must both be (coprime) squares. Suppose $a = m^2$ and $a+b = n^2$, with $\gcd(m,n) = 1$. Then $b = n^2 - m^2$ and we may substitute $a$ and $b$ into $c^2 = a^2 + b^2 + ab$ to get
\begin{equation}
    \label{eq:classic}
    c^2 = n^4 - m^2n^2 + m^4.
\end{equation}
By \cite[p.~638]{dickson1999history}, the only positive solutions to
\eqref{eq:classic} are of the form $(m,n,c) = (t,t, t^2)$, which would force $b = 0$, contradicting the hypothesis that $b$ is positive. 

While we are done, we provide another proof that demonstrates our common strategy for the remaining cases where the equations are not (at least immediately to us) well-known. First, via the substitution $t = n/m$ and $s = c/m^2$, we know that each rational solution $(m,n,c)$ of \eqref{eq:classic} gives a solution $(t,s)$ to the equation
\begin{equation}
    \label{eq:classic-dehomog}
    s^2 = t^4 - t^2 + 1.
\end{equation}
By Lemma~\ref{lem:7.2.2}, rational solutions of \eqref{eq:classic-dehomog} give rational solutions to the equation
\begin{equation}\label{eell5}
x^3 +2x^2 -3x = y^2.
\end{equation}  
Let $\Ga$ denote the group of rational points of the curve \eqref{eell5}. The
rank of $\Ga$ is zero; this can be shown by applying the method described in
\cite[pp.~91-94]{SilvermanTate1992}.  By the Nagell-Lutz Theorem
\cite[p.~56]{SilvermanTate1992}, we only need to check a finite number of
points. We can check that there are $8$ elements of $\Ga$ of finite order (and
thus the whole group): $$(0,0), (1,0), (-3,0), (-1,2), (-1,-2), (3,6), (3,-6)$$
and the point of infinity. Our claims can be checked as properties of the
elliptic curve with label 24.a4 at LMFDB \cite{lmfdb:24.a4}, with the minimal
Weierstrass form $$y^2=x^3-x^2-4x+4.$$ 

By reversing our transformation, we find that \eqref{eq:classic-dehomog} only
has the following solutions: $(t,s)= (0,\pm 1), (\pm 1, \pm 1)$. The former
would imply 
$n= 0$ in \eqref{eq:classic}, which we disallow since $n^2 -m^2 = b > 0$.
The latter would imply $m=n$, which we disallow since $n^2 -m^2 = b > 0$.

\end{proof}

\begin{prop} \label{prop:nonsquare3}
If $t$ is rational, then 
$$(t^2-2)(t^2-3)$$ is not a square of a rational number.
\end{prop}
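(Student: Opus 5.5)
Substituting $s$ for a square root of the expression, we must show that
\[
s^2=(t^2-2)(t^2-3)=t^4-5t^2+6
\]
has no rational solution $(t,s)$. This has the shape of Lemma~\ref{lem:7.2.2} with $a=-5$, $b=6$. So every rational solution gives a rational point $(x,y)$ on
\[
E:\ y^2=x^3+10x^2+x=x(x^2+10x+1),
\]
via $x=2t^2-2s-5$, $y=2t(2t^2-2s-5)$. We follow the same strategy as in the second proof of Proposition~\ref{prop:nt-squares}: show that $E(\Q)$ is finite, list its points, and check that none of them comes from a solution $(t,s)$.

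\textbf{Rank zero (the main obstacle).} We run the $2$-isogeny descent of \cite[pp.~91--94]{SilvermanTate1992}. The curve $E$ has $A=10$, $B=1$. Its isogenous curve is
\[
E':\ y^2=x^3-20x^2+96x=x(x-8)(x-12),
\]
and the rank is given by $|\alpha(\Gamma)|\,|\alpha'(\Gamma')|=2^{r+2}$.
\begin{itemize}
\item For $E'$ we have $B'=96$, so $\alpha'(\Gamma')$ lies in the classes of the squarefree divisors $\pm1,\pm2,\pm3,\pm6$. The points $(0,0)$, $(8,0)$, $(12,0)$ give the classes $6$, $2$, $3$, so $\{1,2,3,6\}\subseteq\alpha'(\Gamma')$. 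A negative $b_1$ forces $b_2=96/b_1<0$. Then $N^2=b_1M^4-20M^2e^2+b_2e^4$ has a negative right side, so there is no solution. Hence $|\alpha'(\Gamma')|=4$.
\item For $E$ we have $B=1$, so $\alpha(\Gamma)\subseteq\{\pm1\}$. It remains to exclude $-1$, that is, to show $N^2=-M^4+10M^2e^2-e^4$ has no solution with $\gcd(M,e)=1$. Reduce mod $3$. If exactly one of $M,e$ is divisible by $3$, the right side is $\equiv -1$. If neither is, it is $\equiv -1+10-1\equiv 2$. Neither is a square mod $3$. So $|\alpha(\Gamma)|=1$.
\end{itemize}
This gives $2^{r+2}=4$, so $r=0$. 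As a cross-check, we would confirm this against the LMFDB entry for $E$.

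\textbf{Torsion.} By Nagell--Lutz \cite[p.~56]{SilvermanTate1992}, a torsion point has integer coordinates, and either $y=0$ or $y^2$ divides the discriminant, which is a small multiple of $96$. The only $y=0$ point is $(0,0)$, since $x^2+10x+1$ has irrational roots. A short check of the finitely many remaining candidates (for instance $x=-1$ gives $y^2=8$) should show that $E(\Q)=\{O,(0,0)\}$.

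\textbf{Reversing the transformation.} A rational point $(x,y)$ of $E$ obtained from a solution $(t,s)$ is affine, so it cannot be $O$. If it had $x=0$, then $s=t^2-\tfrac52$, and hence $s^2=t^4-5t^2+\tfrac{25}{4}\neq t^4-5t^2+6$, a contradiction. So no rational $(t,s)$ exists, and $(t^2-2)(t^2-3)$ is never the square of a rational number.
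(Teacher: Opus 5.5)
Your proposal is correct and takes the same approach as the paper: Lemma~\ref{lem:7.2.2} with $a=-5$, $b=6$ gives the curve $y^2=x^3+10x^2+x$, you show rank zero, you find torsion $\{O,(0,0)\}$, and $x=0$ then forces the contradiction $6=25/4$. The paper only cites Silverman--Tate and LMFDB 96.b1 for the rank and torsion. You carry out the $2$-isogeny descent (with $\bar\alpha(\bar\Gamma)=\{1,2,3,6\}$ and $-1\notin\alpha(\Gamma)$ via the mod-$3$ argument) and the Nagell--Lutz check explicitly, and those computations are correct.
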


\begin{proof} 
By Lemma~\ref{lem:7.2.2}, rational solutions to $(t^2-2)(t^2-3) = s^2$ can be mapped to rational solutions on 
\begin{equation}\label{eell3}
x^3 +10x^2+x=y^2.
\end{equation}  
Let $\Ga$ denote the group of rational points of the curve \eqref{eell3}. As
in the proof of Proposition~\ref{prop:nt-squares}, we can check that the rank
of $\Ga$ is zero by applying the method described in
\cite[pp.~91-94]{SilvermanTate1992}, and then use the Nagell-Lutz theorem and
check that the only elements of $\Ga$ are $(0,0)$ and the point at infinity.
Our claims can be checked as properties of the elliptic curve with label 96.b1
at LMFDB \cite{lmfdb:96.b1}, with the minimal Weierstrass form
$x^3+x^2-32x+60=y^2$. 

Since our only rational point $(x,y)$ of \eqref{eell3} must be $(0,0)$,
considering the transformation in Lemma~\ref{lem:7.2.2}, we must have 
$$ 0 = x = 2t^2 - 2s - 5,$$
so $s = (2t^2 - 5)/2$. Then $$s^2 = (4t^4- 20t^2 + 25)/4 = t^4 - 5t^2 + 25/4.$$
Since we knew $s^2 = t^4 - 5t^2 + 6$, this implies $6= 25/4$, a contradiction.
\end{proof}

\begin{prop}
\label{prop:nonsquare-A-2A-2}
Let $t$ be rational with $t\ne 1, -1/3$.  Then 
$$N = \frac{2}{3}\cd \frac{3t^2 -1}{(3t+1) (t-1)}$$
is not the square of a rational number.
\end{prop}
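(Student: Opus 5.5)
We reduce the statement to the non-existence of rational points on an elliptic curve, following the strategy of Propositions~\ref{prop:nt-squares} and \ref{prop:nonsquare3}. Suppose $N=z^2$ with $z\in\Q$. Multiplying by $9(3t+1)^2(t-1)^2$, which is a nonzero rational square by the hypothesis $t\ne 1,-1/3$, we get
$$w^2 = 6(3t^2-1)(3t+1)(t-1), \qquad w = 3z(3t+1)(t-1).$$
This is a quartic in $t$. Because $N\neq 0$ for rational $t$ (note that $3t^2-1$ has no rational root), we must have $w\neq 0$.

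Unlike the situation in Lemma~\ref{lem:7.2.2}, the leading coefficient $54$ is not a square. So instead of that lemma we will use the visible rational point $t=1$, $w=0$, and send it to infinity. Substituting $t = 1+1/u$ gives
$$3t^2-1=\frac{2u^2+6u+3}{u^2},\qquad 3t+1=\frac{4u+3}{u},\qquad t-1=\frac1u .$$
Setting $v = wu^2$, the quartic becomes the cubic
$$v^2 = 6(4u+3)(2u^2+6u+3).$$
Multiplying by a suitable constant and rescaling (for instance $u = X/(\text{const})$ and $v = Y/(\text{const})$, to make the cubic monic) puts it into Weierstrass form $Y^2 = X^3 + a_2X^2+a_4X+a_6$ with integer coefficients. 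Every rational $t\ne 1$ with a rational $w$ yields a rational point on this curve, and conversely.

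Next we determine the group $\Gamma$ of rational points on this curve. We will show that its rank is zero, either by a $2$-descent as in \cite[pp.~91-94]{SilvermanTate1992} (the cubic has the rational root $u=-3/4$, hence a rational $2$-torsion point, so descent via $2$-isogeny applies), or by identifying the curve in LMFDB through its minimal model. The Nagell--Lutz theorem then leaves a finite list of torsion points to check. We expect $\Gamma$ to consist only of the point at infinity, which corresponds to $t=1$, and the $2$-torsion point $u=-3/4$, which corresponds to $t=-1/3$. Possibly a few further torsion points appear, and these would have to be pulled back and checked by hand.

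Finally we pull the torsion points back to $t$. Each one gives either $t=1$, or $t=-1/3$, or a point with $v=0$, i.e.\ $w=0$, which is impossible since $N\ne 0$; any other torsion point would be checked directly against $N$. All of these are excluded, so $N$ is never a rational square. The main obstacle is the rank-zero claim: it must be checked by a genuine descent, or verified against the database entry, with care that the change of variables matches the minimal model listed there.
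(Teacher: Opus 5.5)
Your proposal is correct and is essentially the paper's proof. The paper substitutes $x=(9t+3)/(t-1)$ directly to reach $y^2=x^3+18x^2-27x$, and your cubic $v^2=6(4u+3)(2u^2+6u+3)$ is exactly that curve under $x=12u+9$, $y=6v$ (since $x=3(3t+1)/(t-1)=3(4u+3)$). The paper states that this curve (LMFDB 144.a1) has rank $0$ and rational points only $O$ and $(0,0)$; these are precisely your $t=1$ and $t=-1/3$, so your expected list is complete and no further torsion points need to be checked.
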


\begin{proof}
Putting $x=(9t+3)/(t-1)$, we have $t=(x+3)/(x-9)$, $x\ne 0,9$, and 
$$N =\frac{x^2 +18x-27}{36x} .$$
Therefore, if $N$ is the square of a rational number, then the elliptic curve
\begin{equation}\label{eell2}
y^2 = (x^2 +18x-27)x
\end{equation}  
has a solution with $x,y\in \qq$ with $x\ne 0$. Let $\Ga$ denote the group of
rational points of the curve \eqref{eell2}. As in the proof of
Proposition~\ref{prop:nt-squares}, we can check that the rank of $\Ga$ is zero
by applying the method described in \cite[pp.~91-94]{SilvermanTate1992}, and
use  the Nagell-Lutz theorem to check that the only elements of $\Ga$ are
$(0,0)$ and the point at infinity. Because  $x\ne 0$, $N$ is not the square of
a rational number. 

Finally, our claims can be checked as properties of the elliptic curve with label 144.a1 at LMFDB \cite{lmfdb:144.a1}.
\end{proof}

\begin{prop}
\label{prop:nonsquare4}
Let $t$ be rational with $t\ne 0, 1, -1/3$. Then 
$$N = \frac{3t^2-6t-1}{(t-1)(3t+1)}$$ is not a square of a rational number. 
\end{prop}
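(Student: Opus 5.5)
We follow the same strategy as in Proposition~\ref{prop:nonsquare-A-2A-2}: change variables so that the condition ``$N$ is a rational square'' becomes a rational point on an elliptic curve, show that this curve has rank zero, and check that its finitely many torsion points correspond only to the excluded values of $t$.

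\emph{Step 1: change of variable.} The denominator $(t-1)(3t+1)$ is the same as in Proposition~\ref{prop:nonsquare-A-2A-2}, so we reuse the substitution
$$x=\frac{9t+3}{t-1},\qquad t=\frac{x+3}{x-9}.$$
Then
$$t-1=\frac{12}{x-9},\qquad 3t+1=\frac{4x}{x-9},\qquad 3t^2-6t-1=\frac{-4(x^2-18x-27)}{(x-9)^2}.$$
Hence
$$N=-\frac{x^2-18x-27}{12x}.$$
Here $t\ne -1/3$ gives $x\ne 0$, $t\ne 1$ corresponds to $x=\infty$, and $t=0$ corresponds to $x=-3$.

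\emph{Step 2: the elliptic curve.} Suppose $N=r^2$ with $r\in\qq$. Multiplying by $(12x)^2$ gives $-12x(x^2-18x-27)=(12xr)^2$. Put $X=-12x$ and clear denominators, multiplying by $144$. This yields a rational point $(X,Y)$ with $X\ne 0$ on
$$Y^2=X(X^2+216X-3888).$$
If convenient, we first rescale to a curve with smaller coefficients.

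\emph{Step 3: rank and torsion.} As before, we show that the rank of the group of rational points is zero, either by a 2-descent via the rational $2$-torsion point $(0,0)$ (the method of \cite[pp.~91-94]{SilvermanTate1992}) or by identifying the curve in LMFDB. We then use Nagell--Lutz to list all torsion points. We expect $(0,0)$ together with the points $(36,\pm 432)$, and possibly a few more.

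\emph{Step 4: excluding the torsion points.} Each point with $X\ne 0$ is traced back through $x=-X/12$ and then to $t$. The point $X=36$ gives $x=-3$, i.e.\ $t=0$, where indeed $N=1$ is a square; this is exactly why $t=0$ is excluded in the statement. Every other torsion point must likewise be shown to give $X=0$, $x=\infty$, or $t\in\{0,1,-1/3\}$. Hence no admissible $t$ makes $N$ a square.

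The main obstacle is Step 3: certifying that the rank is zero and finding the torsion subgroup completely. The descent through the $2$-isogeny requires checking the finitely many squarefree divisors of $3888=2^4\cdot 3^5$ and of the discriminant of the isogenous curve. We would present this computation briefly and cite the LMFDB label as confirmation.
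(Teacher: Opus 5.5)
This is essentially the paper's argument: under $X=36u$, $Y=216v$ your curve $Y^2=X(X^2+216X-3888)$ becomes exactly the paper's curve $v^2=u(u^2+6u-3)$ (LMFDB 36.a2, rank $0$, torsion cyclic of order $6$), so its rational points are precisely $O$, $(0,0)$, $(36,\pm 432)$ and $(-108,\pm 1296)$. In Step~4, note that $X=-108$ gives $x=9$, which is neither $0$, nor $\infty$, nor the image of an excluded $t$ (it corresponds to $t=\infty$); you need to add the observation that $x=(9t+3)/(t-1)\neq 9$ for every rational $t$, just as the paper notes $x\neq -3$ in its coordinates.
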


\begin{proof}
Putting $x=(3t+1)/(1-t)$, we have $t=(x-1)/(x+3)$, $x\ne 0,1,-3$,  and
$$N=\frac{x^2 +6x -3}{4x} ,$$
and so $N$ is the square of a rational number if and only if the elliptic curve
\begin{equation}\label{eell}
(x^2 +6x -3)\cd x=y^2 
\end{equation}  
has a solution with $x,y\in \qq$, $x\ne 0,1,-3$.  Let $\Ga$ denote the
group of rational points of the curve \eqref{eell}. As in the proof of
Proposition~\ref{prop:nt-squares}, we can check that the rank of $\Ga$ is zero
by applying the method described in \cite[pp.~91-94]{SilvermanTate1992}, 
and use the Nagell-Lutz theorem to check that the only elements of $\Ga$ are
\begin{equation*}
(-3,6), \ (-3,-6),\ (1,2),\ (1,-2),\ (0,0),
\end{equation*} 
and the point at infinity; these elements form a cyclic group of order $6$
with generator $(-3,6)$. We observe that $x\in \{ 0,1,-3\}$ for every rational
point $(x,y)$ of $\Ga$ and thus, as we have $x\ne 0,1,-3$, $N$
is not the square of a rational number.

Finally, our claims can be checked as properties of the elliptic curve with label 36.a2 at LMFDB \cite{lmfdb:36.a2}.
\end{proof}

\section{Existence} \label{sec:existence}

In this section, we prove the right-to-left direction of Theorem~\ref{thm:main}.
We start with two classical lemmas about rational values of trig functions at rational multiples of $\pi$.

\begin{lem} \label{lem:niven}
Let $\theta$ be a rational multiple of $\pi$ with $\cos \theta \in \qq$.
Then $$\cos \theta \in \{0,\pm \tfrac 12, \pm 1\}.$$
\end{lem}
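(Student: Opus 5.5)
This is Niven's theorem, and the natural route goes through algebraic integers. Write $\theta = 2\pi p/q$ with $q \ge 1$. Then $\zeta = e^{i\theta}$ is a root of unity, since $\zeta^q = 1$, so $\zeta$ is an algebraic integer: it is a root of the monic integer polynomial $z^q - 1$. Its conjugate $\bar\zeta = \zeta^{-1}$ is also a root of unity. The algebraic integers form a ring, so
$$2\cos\theta = \zeta + \zeta^{-1}$$
is an algebraic integer. By hypothesis it is also rational. A rational algebraic integer is an ordinary integer, by the rational root theorem applied to a monic integer polynomial. Hence $2\cos\theta \in \Z$.

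Since $|2\cos\theta| \le 2$, this gives $2\cos\theta \in \{0,\pm1,\pm2\}$, that is, $\cos\theta \in \{0,\pm\tfrac12,\pm1\}$, which is the claim.

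The only nontrivial input is that sums of algebraic integers are algebraic integers. If an argument avoiding this fact is wanted, I would use Chebyshev polynomials instead. Define $P_n$ by $P_n(2\cos x) = 2\cos(nx)$. The recursion
$$P_{n+1}(y) = yP_n(y) - P_{n-1}(y), \qquad P_0 = 2, \quad P_1 = y,$$
shows that $P_n$ is monic of degree $n$ with integer coefficients. Take $n = q$, so that $q\theta$ is a multiple of $2\pi$. Then $y = 2\cos\theta$ satisfies the monic integer equation $P_q(y) - 2 = 0$. The rational root theorem again forces $y \in \Z$. I expect no real obstacle here; the only care needed is checking monicity in the recursion.
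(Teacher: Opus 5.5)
Your proof is correct. The paper does not prove this lemma at all; it simply cites Niven's Corollary~3.12. Either of your two arguments is the standard proof of that cited result, so you are supplying what the citation contains. The Chebyshev-polynomial version is the more elementary of the two, since it avoids the ring property of algebraic integers.
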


\begin{proof} See \cite[Corollary 3.12, p. 41]{niven}.
\end{proof}

\begin{lem} \label{lem:niventan}
Let $\theta$ be a rational multiple of $\pi$ with $\tan ^2 \theta \in \qq$.
Then $$\tan ^2 \theta \in \{0,\tfrac 13 ,1,3\}$$
\end{lem}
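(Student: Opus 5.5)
The plan is to reduce the statement to Lemma~\ref{lem:niven} with a double-angle identity. Assume $\theta$ is a rational multiple of $\pi$ and $\tan^2\theta \in \qq$. If $\cos\theta = 0$ then $\tan\theta$ is undefined, so we tacitly assume $\cos\theta \neq 0$. Then
$$\cos 2\theta = \frac{1-\tan^2\theta}{1+\tan^2\theta}$$
is a rational function of $\tan^2\theta$ with rational coefficients. The denominator $1+\tan^2\theta$ is at least $1$, so $\cos 2\theta \in \qq$. Also $2\theta$ is still a rational multiple of $\pi$. Lemma~\ref{lem:niven} therefore gives $\cos 2\theta \in \{0,\pm\tfrac12,\pm1\}$.

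Next we invert the relation. Writing $u = \tan^2\theta \ge 0$ and $c = \cos 2\theta$, we have $u = (1-c)/(1+c)$ whenever $c \ne -1$. The five cases give:
\begin{itemize}
\item $c=1$ gives $u=0$;
\item $c=\tfrac12$ gives $u=\tfrac13$;
\item $c=0$ gives $u=1$;
\item $c=-\tfrac12$ gives $u=3$;
\item $c=-1$ would force $\cos^2\theta = (1+c)/2 = 0$, which is excluded because $\tan\theta$ is defined.
\end{itemize}
Hence $\tan^2\theta \in \{0,\tfrac13,1,3\}$, as claimed.

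There is no real obstacle here. The only points needing care are that $\cos 2\theta$ is well defined and rational, and that the degenerate value $c=-1$ is ruled out by the implicit hypothesis that $\tan\theta$ exists. I would state that hypothesis explicitly at the start of the proof.
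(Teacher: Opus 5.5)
Your proof is correct and is essentially the paper's argument: you get $\cos 2\theta \in \qq$ from the double-angle formula, apply Lemma~\ref{lem:niven} to $2\theta$, and then invert $\tan^2\theta = (1-\cos 2\theta)/(1+\cos 2\theta)$ on the five possible values. Like the paper, you exclude $\cos 2\theta = -1$ because $\tan\theta$ would then be undefined.
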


\begin{proof}
Since $\cos 2\theta = (1-\tan ^2 \theta)/
(1+\tan ^2 \theta)$, $\cos 2 \theta$ is rational. Since $2\theta$ is a rational multiple of $\pi$,
 $\cos 2\theta \in \{0,\pm \tfrac 12 ,\pm 1\}$ by Lemma~\ref{lem:niven}.
Solving $$\tan ^2 \theta = (1-\cos 2\theta)/(1+\cos 2\theta)$$ for these five
values gives $\tan ^2 \theta \in \{0,\tfrac 13, 1, 3\}$, the value
$\cos 2\theta = -1$ being excluded since then $\tan \theta$ is undefined.
\end{proof}

\begin{mainrestatement}[Right to left direction]
For each of the following conditions, suppose that a triangle $T$ with angles $(A,B,C)$
in some order satisfies the condition. Then $T$ admits a non-square tiling.
\begin{enumerate}[{\rm (1)}]
\item $A=B$, i.e. $T$ is an isosceles triangle (including equilateral);
\item $C =  \pi / 2$ and the legs of the right triangle $T$ are in integer
ratio $M/K$, where $M^2 + K^2$ is not a square;
\item $(A,B,C)  = (\pi/6, \pi/2, \pi/3)$;
\item $C = \pi/3$, with $\sqrt{3}\tan(A/2)$ rational;
\item $B = 2A$,  with $\sqrt{3}\tan(A/2)$ rational;
\item $B = 2A$, with $\sin(A/2)$ rational;
\item  $C = A/2 + B$, with $2 \sin(A/4)$ rational, equal to $M/K$, where
$2K^2-M^2$ is not a square. 
\item $C = 2A+B/2$, with $\sqrt{3}\tan(A/2)$ rational.
\end{enumerate}
\end{mainrestatement}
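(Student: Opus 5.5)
The plan is to treat each of the eight conditions separately. For each one we exhibit an explicit tiling, compute its number of tiles $N$ as the ratio of the area of $T$ to the area of the tile, and then show that $N$ is not a square.

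\emph{Conditions (1)--(3).} These need no new work.
\begin{itemize}
\item For (1), cut an isosceles $T$ along its axis of symmetry into $2$ tiles.
\item For (2), drop the altitude from the right angle repeatedly. Equivalently, use the classical reptiling of a right triangle with legs $M,K$ into $M^2+K^2$ copies; this number is not a square by hypothesis.
\item For (3), the $(\pi/6,\pi/3,\pi/2)$ triangle reptiles into $3$ copies.
\end{itemize}

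\emph{Reduction for the remaining conditions.} For (4)--(8) we first dispose of the commensurable situation. If the angles of $T$ are rational multiples of $\pi$, the rationality hypothesis forces specific values:
\begin{itemize}
\item In (4), (5) and (8), Lemma~\ref{lem:niventan} applied to $\tan^2(A/2)$ gives $\tan^2(A/2)\in\{0,\tfrac13,1,3\}$.
\item In (6), Lemma~\ref{lem:niven} applied to $\cos A = 1-2\sin^2(A/2)$ gives $\cos A\in\{0,\pm\tfrac12,\pm1\}$.
\item In (7), the same lemma applies to $\cos(A/2)=1-\tfrac12(M/K)^2$.
\end{itemize}
This leaves finitely many triangles. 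Each is checked to be isosceles or the $(\pi/6,\pi/3,\pi/2)$ triangle, or else degenerate and therefore excluded, so Corollary~\ref{cor:commensurable} handles them. Hence we may assume the angles are incommensurable.

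\emph{Constructing the tilings.} We read off the tile from the table of Proposition~\ref{prop:TtoR}.
\begin{itemize}
\item In (4), (5) and (8), the tile has $\gamma=2\pi/3$ and $\alpha=A$. Proposition~\ref{prop:group-2-alg} parametrizes its sides rationally by $t=\tan(\alpha/2)/\sqrt3$, with $c^2=a^2+b^2+ab$.
\item In (6), the tile has $3\alpha+2\beta=\pi$. Proposition~\ref{prop:group-1-alg} gives sides proportional to $2s,\ 1-4s^2,\ 1$ with $s=\sin(\alpha/2)$.
\end{itemize}
The tilings of Group~1 and Group~2 type described in \cite{laczkovich1995tilings} (and in the earlier Beeson papers) exist exactly when the tile has commensurable sides, which our rationality hypotheses guarantee. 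In each configuration the sides of $T$ are integer combinations of $a,b,c$, because each side of $T$ is composed of tile edges.

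\emph{Counting tiles.} We compute $N$ as a quotient of areas, namely $N=(\text{product of two sides of }T)\cdot\sin(\text{included angle})$ divided by $ab\sin\gamma$. For $\gamma=2\pi/3$ and an angle of $T$ equal to $\alpha+\beta=\pi/3$, the sines cancel. This gives expressions such as $N=(xa+yb)(za+wb)/(ab)$, which in terms of $t$ become rational functions.

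\emph{Showing $N$ is not a square.} Here we match each case to one of the number-theoretic results of the previous section.
\begin{itemize}
\item One of the $\gamma=2\pi/3$ cases should reduce, after clearing denominators, to requiring $a(a+b)$ to be a square while $a^2+ab+b^2=c^2$; Proposition~\ref{prop:nt-squares} excludes this.
\item The other two $\gamma=2\pi/3$ cases should give the rational functions of Propositions~\ref{prop:nonsquare-A-2A-2} and \ref{prop:nonsquare4}, whose excluded values $t=0,1,-1/3$ correspond to degenerate or isosceles triangles.
\item Case (6) should reduce to Proposition~\ref{prop:nonsquare3}.
\end{itemize}

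\emph{Condition (7).} This is different, since the hypothesis itself builds in non-squareness. Theorem~4 of \cite{beeson-isosceles} supplies a tiling of the triangle $(2\alpha,\beta,\alpha+\beta)$ with $N=2K^2-M^2$ tiles, where $2\sin(\alpha/2)=M/K$. By assumption this $N$ is not a square.

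\emph{Main obstacle.} The hardest step is the bookkeeping in the Group~2 cases. We must write down the actual tiling, compute $N$ correctly as a function of $t$, and identify precisely which of the elliptic-curve propositions it matches. Along the way we must check that the finitely many rational points (the torsion points) correspond only to excluded parameters. I would first draw each Group~2 configuration and express the sides of $T$ in terms of $a,b,c$, then simplify $N$ symbolically until it matches one of the stated rational functions.
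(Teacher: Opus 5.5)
There is a genuine gap in your treatment of condition (7). You use the cited Theorem~4 as an existence statement: a tiling with exactly $2K^2-M^2$ tiles, for the $M,K$ given in the hypothesis. The theorem runs the other way. \emph{If} $T$ has a tiling with $N$ tiles, then $N=2K'^2-M'^2$ for some integers with $M'/K'=2\sin(\alpha/2)$. For a prescribed pair $(M,K)$ the corresponding tiling need not exist, because the pair attached to an actual tiling must satisfy $K\mid M^2$ (see the remark after Proposition~\ref{prop:group-1-last-existence}). Concretely, take $2\sin(A/4)=1/2$ with $(M,K)=(1,2)$. Then $2K^2-M^2=7$, yet there is no $7$-tiling of that $T$; the smallest tiling is the $28$-tiling with $(M,K)=(2,4)$. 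So ``by assumption this $N$ is not a square'' refers to a number that need not be the tile count of any tiling. As written, the argument does not produce a non-square tiling.

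The repair is short. Existence comes from Theorem~2.4 of \cite{laczkovich1995tilings}, with tile $\alpha=A/2$, $\beta=B$, so that $3\alpha+2\beta=\pi$ and $\sin(\alpha/2)=M/(2K)\in\mathbb{Q}$; your general construction paragraph already covers this. For the count, you need that squareness is unchanged when the fraction $M/K$ is rescaled. There are two ways to get this.
\begin{enumerate}
\item Via Theorem~4: $N=2K'^2-M'^2=(K'/K)^2(2K^2-M^2)$, which is how the paper argues.
\item Via your own area method: since $\sin C=\sin(\alpha+\beta)=\sin\gamma$, one gets $N=(x/c)^2\sin 2\alpha/\sin\alpha=(x/c)^2\cdot 2\cos\alpha=\bigl(x/(cK)\bigr)^2(2K^2-M^2)$. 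Here $x$ is the side of $T$ opposite $C$, and $x/c\in\mathbb{Q}$.
\end{enumerate}

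The rest of your plan agrees with the paper.
\begin{itemize}
\item Case (4) gives $N=m^2(a+b)/b$ via Lemma~\ref{lem:prop24helper}, which Proposition~\ref{prop:nt-squares} excludes.
\item Cases (5) and (8) give exactly the rational functions of Propositions~\ref{prop:nonsquare-A-2A-2} and~\ref{prop:nonsquare4}.
\item For case (6) the paper quotes Beeson's Theorem~9, but your direct area computation also works. It gives $N=(X/a)^2u^2(2-u^2)(3-u^2)$, with $u=2\sin(\alpha/2)$ and $X$ the side of $T$ opposite $\alpha$, and this is settled by Proposition~\ref{prop:nonsquare3}.
\end{itemize}
Finally, your ``main obstacle'' is not one. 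The sides of $T$ are rational multiples of those of the tile, so $N$ is fixed up to a rational square by the two shapes alone, and no configuration needs to be drawn.
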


\begin{proof}
For case (1), simply split $T$ symmetrically
into two triangles. 

For case (2), let $N= M^2 + K^2$. Then by hypothesis $N$ is not a square.
An appropriately-chosen right triangle can be $N$-tiled by right triangles
similar to $T$.  See Figure~\ref{figure:biquadratic}.
These {\em biquadratic} tilings were first described by S.W. Golomb in \cite{golomb1964replicating},
and mentioned again in \cite{snover1991}.

For case (3), see Figure~\ref{figure:tiling3} for a tiling with $N=3$, which is not a square.

In each of the cases (4)--(8) we may assume that the angles of $T$ are
incommensurable.  Indeed, suppose they are commensurable.  In cases (4), (5),
and (8) we have $\sqrt 3 \tan (A/2) \in \qq$, so $\tan ^2 (A/2) \in \qq$, and
by Lemma~\ref{lem:niventan}, $\tan ^2 (A/2) \in \{0,\tfrac 13,1,3\}$; the value
$1$ is excluded since $\sqrt 3 \notin \qq$, so $A \in \{\pi/3, 2\pi/3\}$.  In
case (6) we have $\sin (A/2) \in \qq$, so $\cos A = 1-2\sin ^2 (A/2) \in \qq$
and Lemma~\ref{lem:niven} gives $A \in \{\pi/3, \pi/2, 2\pi/3\}$; but
$\sin (A/2) \in \qq$ then forces $A = \pi/3$.  In case (7), $2\sin (A/4) \in \qq$
similarly forces $A = 2\pi/3$.  Now $B = 2A$ in cases (5) and (6) leaves
$C \le 0$; in case (7), $C = A/2+B$ with $A = 2\pi/3$ leaves $B = 0$; in case
(8), $C = 2A+B/2$ leaves $B \le 0$; and in case (4), $C = \pi/3$ with
$A \in \{\pi/3,2\pi/3\}$ leaves only $A = B = C = \pi/3$, which is case (1).
Thus in cases (5), (6), (7), and (8) no such $T$ exists, and in case (4) the
only one is equilateral, already covered.  Hence, as claimed, we may
assume that the angles of $T$ in cases (4)--(8) are incommensurable.

The incommensurable instances of cases (4)--(8) are established,
respectively, by Propositions~\ref{prop:60-angle-existence}--\ref{prop:group-2-last-existence} below.
In case (4),
the condition $\sqrt 3 \tan (A/2) \in \qq$ holds for both of the angles other
than $C$, so we may choose the permutation with $A<B$.
For each of
the cases, we must show that at least one non-square tiling exists. Cases (4),
(5), and (8) have explicit constructions by the third author in
\cite{zhang2026}. Cases (6) and (7) have explicit constructions by the first
author in \cite{beeson-triangletiling3}. However, the existence of at least one
tiling for each triangle falling under cases  (4)-- (8) was already proven by
the second author in \cite{laczkovich1995tilings}. The main work we do is
showing that the number of tiles in  these constructions is not a square.
\end{proof}

\begin{lem} \label{lem:prop24helper} 
If $\alpha+\beta = \pi/3$, then
\begin{equation*}
\sin(\alpha+2\beta) = \sin \alpha + \sin \beta.
\end{equation*}
\end{lem}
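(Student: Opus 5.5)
The plan is to use sum-to-product identities together with the constraint $\alpha+\beta=\pi/3$. First I would rewrite the left side in terms of a single angle. Since $\alpha+2\beta=(\alpha+\beta)+\beta=\pi/3+\beta$, we have
$$\sin(\alpha+2\beta)=\sin(\pi/3+\beta).$$
The same constraint gives $\pi/3+\beta=\pi-(\pi/3+\alpha)$, because $2\pi/3-\alpha+\dots$ reduces as follows: $\pi-\pi/3-\alpha=2\pi/3-\alpha=\pi/3+\beta$. Hence $\sin(\alpha+2\beta)=\sin(\pi/3+\beta)=\sin(2\pi/3-\alpha)$. This symmetric form is convenient but not strictly needed.

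Next, I would treat the right side with sum-to-product:
$$\sin\alpha+\sin\beta=2\sin\tfrac{\alpha+\beta}{2}\cos\tfrac{\alpha-\beta}{2}=2\sin\tfrac{\pi}{6}\cos\tfrac{\alpha-\beta}{2}=\cos\tfrac{\alpha-\beta}{2}.$$
So the lemma reduces to showing $\sin(\pi/3+\beta)=\cos\frac{\alpha-\beta}{2}$. Using $\sin x=\cos(\pi/2-x)$, the left side equals $\cos(\pi/6-\beta)$. Substituting $\pi/6=(\alpha+\beta)/2$ gives
$$\pi/6-\beta=\frac{\alpha+\beta}{2}-\beta=\frac{\alpha-\beta}{2},$$
which closes the argument.

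There is no real obstacle here. The only step that needs care is keeping the substitutions $\pi/3=\alpha+\beta$ and $\pi/6=(\alpha+\beta)/2$ straight so that the two cosine arguments match exactly. As a sanity check I would test $\alpha=\beta=\pi/6$: then $\sin(\pi/2)=1$ and $\sin(\pi/6)+\sin(\pi/6)=1$, which agrees.
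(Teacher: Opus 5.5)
Your proof is correct and is essentially the paper's argument: both rewrite $\alpha+2\beta=\pi/3+\beta$ and then verify an elementary trigonometric identity. The difference is only cosmetic. The paper expands $\sin(\pi/3+\beta)$ by the addition formula and recognizes $\frac{\sqrt3}{2}\cos\beta-\frac12\sin\beta=\sin(\pi/3-\beta)=\sin\alpha$, whereas you apply sum-to-product to $\sin\alpha+\sin\beta$ and match $\cos\frac{\alpha-\beta}{2}$ with $\cos(\pi/6-\beta)$.
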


\begin{proof}
\begin{eqnarray*}
\alpha &=& \pi/3 -\beta  \\
\alpha + 2 \beta &=& \pi/3 + \beta  \\
\sin(\alpha+2\beta) &=& \sin(\pi/3+\beta)  \\
&=& \sin(\pi/3) \cos \beta + \cos(\pi/3) \sin \beta  \\
&=&  \frac {\sqrt 3} 2 \cos \beta + \frac 1 2 \sin \beta\\
&=& \sin(\pi/3 - \beta) + \sin(\beta)  \\
&=& \sin(\alpha) + \sin \beta \qedhere
\end{eqnarray*}
\end{proof} 
\begin{remark} The preceding lemma can also be proved
more geometrically, using the law of sines, introducing $a$ and $b$
as illustrated in the diagram.

\begin{minipage}[t]{0.55\textwidth}
\begin{eqnarray*}
 \frac{\sin(\alpha+2\beta)}{a+b} &=& \frac {\sin \alpha} a  \mbox{\qquad (law of sines)} \\
 \sin(\alpha+2\beta) &=& a \frac{\sin \alpha} a + b \frac{\sin \alpha} a \\
 &=& \sin \alpha + \sin \beta \qedhere
 \end{eqnarray*}
 
\end{minipage}
\begin{minipage}[t]{0.4\textwidth}
{\centering
\vskip0.1cm
\includegraphics[width=0.75\textwidth]{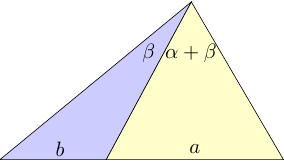}
\par
}
\end{minipage}
\end{remark}
\begin{prop} \label{prop:60-angle-existence}
Let $T$ have incommensurable angles $(A, B, \pi/3)$ with $A<B$, such that
$$\sqrt{3}\tan\left(A/2\right)\in \mathbb{Q}.$$
Then
\begin{enumerate}[{\rm (1)}]
\item $T$ has a tiling into $(\alpha, \beta, \gamma)$ with $\alpha = A$,
$\beta = \pi/3-A$, and $\gamma = 2\pi/3$.
\item For any such tiling, the number of tiles is not a square.
\end{enumerate}
\end{prop}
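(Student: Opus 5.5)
The plan has two parts: quote existence from the literature, then show that the tile count is a rational non-square by an argument that does not depend on the particular tiling.

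\emph{Part (1).} The existence of a tiling of $T$ into $(\alpha,\beta,\gamma)$ with $\alpha=A$, $\beta=\pi/3-A$, $\gamma=2\pi/3$ is the Group~2 case $(\alpha,\alpha+\beta,\alpha+2\beta)$ of \cite{laczkovich1995tilings}; explicit constructions are in \cite{zhang2026}. Indeed $A=\alpha$, $C=\pi/3=\alpha+\beta$, and $B=2\pi/3-A=\alpha+2\beta$. So I would simply cite this.

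\emph{Part (2).} Fix any tiling into $N$ tiles, so $N=\operatorname{area}(T)/\operatorname{area}(R)$. Write the sides of $T$ as $\lambda\sin A,\lambda\sin B,\lambda\sin C$ and the sides of $R$ as $\mu\sin\alpha,\mu\sin\beta,\mu\sin\gamma$. Then
$$N=\frac{\lambda^2}{\mu^2}\cdot\frac{\sin A\sin B\sin C}{\sin\alpha\sin\beta\sin\gamma}.$$
By Theorem~\ref{thm:rationality}, $R$ has commensurable sides. The angles are incommensurable and $R$ is neither a right triangle nor similar to $T$, so the theorem applies. Since $\sqrt3\tan(A/2)\in\Q$ and $C=\pi/3$, Proposition~\ref{prop:group-2-alg} applied to $T$ shows that $T$ has commensurable sides as well. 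Moreover the side $\lambda\sin C$ of $T$ is a finite sum of tile sides, since each side of $T$ is covered by tile edges. Hence $\lambda/\mu\in\Q$, and $N$ is a rational square if and only if
$$Q=\frac{\sin A\sin B\sin C}{\sin\alpha\sin\beta\sin\gamma}$$
is one. This makes the conclusion independent of the particular tiling.

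Next compute $Q$. We have $\sin A=\sin\alpha$ and $\sin C=\sin\gamma=\sqrt3/2$, and by Lemma~\ref{lem:prop24helper}, $\sin B=\sin\alpha+\sin\beta$. Hence $Q=1+\sin\alpha/\sin\beta$. Set $t=\tan(\alpha/2)/\sqrt3\in\Q$, which satisfies $0<t<1/3$ by \eqref{eq:268}. Using \eqref{eq:etrig} and $\sin\beta=\tfrac{\sqrt3}{2}\cos\alpha-\tfrac12\sin\alpha$, I expect
$$\sin\beta=\frac{\sqrt3(1-3t)(1+t)}{2(1+3t^2)},\qquad \frac{\sin\alpha}{\sin\beta}=\frac{4t}{(1-3t)(1+t)},$$
and therefore
$$Q=\frac{(1+3t)(1-t)}{(1-3t)(1+t)}.$$
Thus $Q$ is a rational square if and only if $(1-9t^2)(1-t^2)$ is one. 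With $u=1/t$ this becomes $s^2=u^4-10u^2+9$.

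The main obstacle is showing that this quartic has no rational points with $u>3$; the trivial points $u=\pm1,\pm3$ correspond to the excluded values $t=\pm1,\pm\tfrac13$. I would apply Lemma~\ref{lem:7.2.2} with $a=-10$, $b=9$, which gives the curve $y^2=x^3+20x^2+64x=x(x+4)(x+16)$. I would then proceed as in Propositions~\ref{prop:nt-squares}--\ref{prop:nonsquare4}. First verify rank $0$ by the $2$-descent of \cite[pp.~91--94]{SilvermanTate1992}, or try a substitution in $t$ that turns it into one of the curves already treated. Then list the torsion via Nagell--Lutz; I expect torsion $\Z/2\times\Z/4$, including points such as $(-8,\pm16)$ and $(8,\pm48)$. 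Finally, pull each torsion point back through the inverse map of Lemma~\ref{lem:7.2.2} and check that it yields only $x=0$ or $u\in\{\pm1,\pm3\}$, that is, $t\notin(0,1/3)$. This contradicts $0<t<1/3$, so $N$ is not a square.
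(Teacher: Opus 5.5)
Your proof is correct. Part (1) and the reduction $N=(\lambda/\mu)^2Q$ with $Q=1+\sin\alpha/\sin\beta$ agree with the paper, which writes the same quantity as $(a+b)/b$ for integer tile sides $a,b,c$. The two routes diverge only at the Diophantine step.

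The paper keeps the integer sides. A square $N$ forces $b(a+b)$ to be a square. Since $\gamma=2\pi/3$, $c^2=a^2+ab+b^2$ is already a square, and Proposition~\ref{prop:nt-squares} (a classical fact from Dickson) rules out both holding at once. You instead parametrize the conic $a^2+ab+b^2=c^2$ by $t$, which produces a new quartic $s^2=u^4-10u^2+9$ and seemingly a new elliptic curve.

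That extra work is smaller than it looks. Under $(x,y)=(4x'-8,\,8y')$, your curve $y^2=x(x+4)(x+16)$ becomes $y'^2=x'^3-x'^2-4x'+4$. This is exactly the curve 24.a4 already used in Proposition~\ref{prop:nt-squares}, so rank $0$ and torsion $\Z/2\times\Z/4$ can be cited rather than re-derived.

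Pull the eight torsion points back by $u=y/(2x)$:
\begin{itemize}
\item $(8,\pm48)$ gives $u=\pm3$;
\item $(-8,\pm16)$ gives $u=\mp1$;
\item $(-4,0)$ and $(-16,0)$ give $u=0$, which your list omits. This is harmless, since you need $u>3$;
\item $(0,0)$ is not in the image at all.
\end{itemize}

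Your route is uniform with Propositions~\ref{prop:nonsquare-A-2A-2} and~\ref{prop:nonsquare4} and gives an explicit formula for $N$ in terms of $t$. Your justification that $\lambda/\mu\in\Q$, namely that each side of $T$ is a union of whole tile edges, is also more explicit than the paper's one-line assertion that $m$ is rational. The paper's route is shorter.

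A minor point: commensurability of the sides of $R$ follows directly from Proposition~\ref{prop:group-2-alg} applied to $R$, so invoking Theorem~\ref{thm:rationality} is unnecessary.
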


\begin{remark} By Proposition~\ref{prop:group-2-alg} (or direct trigonometry) 
$$\sqrt{3}\tan\left(A/2\right)\in \mathbb{Q} \iff \sqrt{3}\tan\left(B/2\right)\in \mathbb{Q}.$$
Therefore the hypothesis $A<B$ only fixes which angle is called ``$A$'' and which is called ``$B$.''
\end{remark}

\begin{proof}
By assumption and
Proposition~\ref{prop:group-2-alg}, $\sqrt 3 \sin A$ and $\cos A$ are rational.
Let $\alpha = A, \beta = \pi/3-A$. Then $T = (\alpha, \alpha + 2\beta,
\alpha + \beta)$ and by Theorem~2.5 of \cite{laczkovich1995tilings},
there is a tiling of $T$ into $R$. That proves (1).

Proof of (2): Consider  any such tiling of $T$. We will show that the number of
tiles is not a square. By Proposition \ref{prop:group-2-alg},
the tile has commensurable sides, so by scaling we may assume the sides of the
tile, $a$, $b$, and $c$ are integers.  Since $\gamma = 2\pi/3$,
we have $c^2 = a^2 + b^2 + ab$, and thus $a^2 + b^2 + ab$ is a square.

By Lemma~\ref{lem:prop24helper} and the law of sines, the sides of $T$ facing
angles $(\alpha, \alpha + 2\beta, \alpha + \beta)$ are in ratio of
$(a, a+b, c)$. Therefore, the lengths of the triangle $T$ must be  
$$(am, (a+b)m, cm)$$
for some number $m$. Since the sides of $T$ must be integral, $m$ must be
rational. Then the number of tiles $N$ is the
area of $T$ divided by the area of the tile:
$$N = \frac{\frac{1}{2} am \cd (a + b)m \cd \sin (\pi/3)}{\frac{1}{2}a
\cd b \cd \sin (2\pi/3)} =\frac{a+b}{b}m^2.$$
For this to be a square, $b(a+b)m^2$ and hence $b(a+b)$ also must be squares. 
By Proposition~\ref{prop:nt-squares}, this is not possible.
\end{proof}

\begin{prop}
\label{prop:A-2A-existence1}
Let $T$ have incommensurable angles $(A, 2A, \pi - 3A)$,
and suppose
$\sin(A/2) \in \qq$.  Then,
\begin{enumerate}[{\rm (1)}]
    \item $T$ has a tiling into a tile with angles $(\alpha, \beta, \gamma)$, where $\alpha = A$ and $3\alpha + 2\beta = \pi$.
    \item For any such tiling, the number of tiles cannot be a square.
\end{enumerate}
\end{prop}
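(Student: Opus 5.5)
The plan is to follow the template of Proposition~\ref{prop:60-angle-existence}. Part (1) is an existence statement handled by the literature. Part (2) is an area computation that reduces to the Diophantine statement of Proposition~\ref{prop:nonsquare3}.

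\emph{Part (1).} Put $\alpha=A$ and $\beta=(\pi-3A)/2$. Then $3\alpha+2\beta=\pi$ and $\pi-3A=2\beta$, so the angles of $T$ are exactly the Group~1 shape $(\alpha,2\alpha,2\beta)$ of Theorem~\ref{thm:l1995-4.1}. For incommensurable angles, the existence of a tiling of such a $T$ by the tile $(\alpha,\beta,\gamma)$ with $\gamma=(\pi+\alpha)/2$ is part of the existence results of \cite{laczkovich1995tilings} (the Group~1 analogue of Theorem~2.5 used in Proposition~\ref{prop:60-angle-existence}). So I would simply cite it; the explicit construction in \cite{beeson-triangletiling3} serves as a cross-check.

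\emph{Part (2).} Write $s=\sin(\alpha/2)\in\qq$. By Proposition~\ref{prop:group-1-alg} the tile has commensurable sides, in the ratio $a:b:c=2s:(1-4s^2):1$, and we scale so that $a,b,c$ are integers. Each side of $T$ is a sum of tile edges, so the sides of $T$ are integers. The side of $T$ opposite $\alpha$ is therefore $ma$ with $m\in\qq$. Using area $=2\rho^2\sin\cdot\sin\cdot\sin$ for circumradius $\rho$, this gives
$$N=m^2\,\frac{\sin 2\alpha\,\sin 3\alpha}{\sin\beta\,\sin\gamma}.$$
Next I would express everything in $s$. We have $\sin\gamma=\cos(\alpha/2)$ and $\sin\beta=\cos(3\alpha/2)=\cos(\alpha/2)(1-4s^2)$. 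Also $\sin2\alpha=4s\cos(\alpha/2)(1-2s^2)$ and $\sin3\alpha=2s\cos(\alpha/2)(4s^2-1)(4s^2-3)$, where the last factorization uses $3-16s^2(1-s^2)=(4s^2-1)(4s^2-3)$. The factor $1-4s^2$ cancels, leaving
$$N=8m^2s^2(1-2s^2)(3-4s^2).$$

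Now put $t=2s\in\qq$. Then $2(1-2s^2)(3-4s^2)=(t^2-2)(t^2-3)$, so $N=(mt)^2\,(t^2-2)(t^2-3)$. Since $mt\neq 0$, $N$ is a rational square if and only if $(t^2-2)(t^2-3)$ is one. Proposition~\ref{prop:nonsquare3} rules this out, which completes the argument.

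The main obstacle is the trigonometric bookkeeping: the factorization of $\sin3\alpha$ and the cancellation of $1-4s^2$ against $\sin\beta$ have to land exactly on the quartic of Proposition~\ref{prop:nonsquare3}. I would double-check this numerically at one value of $s$. A lesser point is justifying $m\in\qq$, which I would argue exactly as in Proposition~\ref{prop:60-angle-existence}.
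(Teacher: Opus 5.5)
Your argument is correct, and part (1) matches the paper, which cites Theorem~2.4 of \cite{laczkovich1995tilings}; that is the Group~1 existence result you have in mind.

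For part (2) the routes differ. The paper does not compute the area ratio itself. It quotes Theorem~9 of \cite{beeson-triangletiling3}, which gives $N = M^2 (2-s^2)(3-s^2)/\bigl((1-s)^2(2+s)^2\bigr)$ with $s = 2\sin(\alpha/2)$ and $M\in\Z$, and then invokes Proposition~\ref{prop:nonsquare3}.

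You instead derive $N = (mt)^2(t^2-2)(t^2-3)$ from scratch. You use the law of sines together with the observation that each side of $T$ is a union of whole tile edges, which gives $m\in\qq$. Your $t$ is the paper's $s$, and the two formulas agree. For the $77$-tiling by $(2,3,4)$ one has $t=\tfrac12$, $m=8$, $M=5$, and indeed $mt = M/\bigl((1-t)(2+t)\bigr)$.

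Your version is self-contained and mirrors how the paper handles the Group~2 cases in Propositions~\ref{prop:A-2A-existence2} and \ref{prop:group-2-last-existence}. It pins down $N$ only up to a nonzero rational square factor, which is exactly what the non-square conclusion requires. The paper's citation yields an exact integer-parametrized count tied to the explicit construction, but that extra precision is not used here.
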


\begin{proof}
Suppose $\sin (A/2)\in \qq$.
Put $\al =A$, $\be =(\pi -3\al )/2$, $\ga =(\pi +\al )/2$. Since
$\sin (\al /2)\in \qq$, it follows from \cite[Theorem 2.4]{laczkovich1995tilings}  that $T$ can be
tiled by congruent triangles of angles $(\al ,\be ,\ga )$. Note that
$\pi -3A=2\be$. 

Theorem 9 of 
\cite{beeson-triangletiling3}  says that the number $N$ of tiles is given by 
\begin{eqnarray}
N = M^2 \frac{(2-s^2)(3-s^2)}{(1-s)^2(2+s)^2} 
\mbox{\quad where $s = 2\sin(\alpha/2) = a/c$}
\label{eq:817}
\end{eqnarray}
for some integer $M$.  If $N$ is a square,
then $(2-s^2)(3-s^2)$ is a square.
But that contradicts Proposition~\ref{prop:nonsquare3}.
\end{proof}

\begin{remark}
We will numerically check the formula in the proof above for the 77-tiling 
shown in Figure~\ref{figure:tiling77}. There $\sin (\al /2)=1/4$, $N=77$, $M=5$,
$s=\frac 1 2 $, $(a,b,c) = (2,3,4)$. Both sides of \eqref{eq:817} evaluate to
$77$, so that checks. $\checkmark$
\end{remark}

\begin{prop}
\label{prop:A-2A-existence2}
Let $T$ have incommensurable angles $(A, 2A, \pi - 3A)$.
Suppose  $\sqrt{3} \tan(A/2) \in \qq$. Then, 
\begin{enumerate}[{\rm (1)}]
    \item $T$ has a tiling into $(\alpha, \beta, \gamma)$ with $\alpha = A$, $\beta = \pi/3-A$, and $\gamma = 2\pi/3$.
    \item For any such tiling, the number of tiles is not a square.
\end{enumerate}
 
\end{prop}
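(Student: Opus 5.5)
Part (1) is proved exactly like part (1) of Proposition~\ref{prop:60-angle-existence}. By Proposition~\ref{prop:group-2-alg}, $\sqrt 3\sin A$ and $\cos A$ are rational. Put $\al=A$, $\be=\pi/3-A$, $\ga=2\pi/3$. Then $\pi-3A=\pi-3\al=3\be$, because $3\al+3\be=\pi$. So $T=(\al,2\al,3\be)$, which is one of the Group~2 shapes in Theorem~\ref{thm:l1995-4.1}. Existence of the tiling then follows from the relevant existence theorem of \cite{laczkovich1995tilings} (Theorem~2.5 there, as in Proposition~\ref{prop:60-angle-existence}).

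For (2), I would follow the template of Proposition~\ref{prop:60-angle-existence}. By Proposition~\ref{prop:group-2-alg} the tile has commensurable sides, so we may scale it to integer sides $a,b,c$ opposite $\al,\be,\ga$. Every side of $T$ is a sum of tile sides, so the side of $T$ opposite $A=\al$ equals $am$ with $m\in\qq$. A triangle with angles $(X,Y,Z)$ and side $L$ opposite $X$ has area $L^2\sin Y\sin Z/(2\sin X)$. Comparing the areas of $T$ and of the tile gives
\[
N=m^2\,\frac{\sin 2\al\,\sin 3\be}{\sin\be\,\sin\ga}
 =m^2\,\frac{\sin 2\al\,\sin 3\al}{\sin(\pi/3-\al)\cdot\tfrac{\sqrt3}{2}},
\]
using $\sin 3\be=\sin(\pi-3\al)=\sin3\al$.

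Next I would substitute the parametrization \eqref{eq:etrig} with $t=\tan(\al/2)/\sqrt3\in\qq$, where $0<t<1/3$ by \eqref{eq:268}. The ingredients are:
\[
\sin 2\al=2\sin\al\cos\al,\qquad \sin3\al=\sin\al\,(4\cos^2\al-1),
\]
\[
\sin(\pi/3-\al)=\tfrac{\sqrt3}{2}\,\frac{(1-3t)(1+t)}{1+3t^2}.
\]
Each factor is $\sqrt3$ times a rational function of $t$, or rational. Counting powers, the $\sqrt3$'s leave a rational expression. The aim is to write $N=(\text{rational square})\cdot\frac23\cdot\frac{3t^2-1}{(3t+1)(t-1)}$, possibly after the harmless change $t\mapsto -t$, or after switching to the parameter attached to $\be$ rather than $\al$. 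Square factors such as $(1+3t^2)^2$ and $\sin^2\al$ are absorbed into $m^2$, and the factor $4\cos^2\al-1=(1-2\cos\al)(\ldots)$ must split into linear pieces that pair with $(1-3t)(1+t)$.

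Once $N$ is in this form, Proposition~\ref{prop:nonsquare-A-2A-2} applies. The constraint $0<t<1/3$, or its image under whatever substitution is used, excludes $t=1$ and $t=-1/3$. Hence $N$ is not a rational square, and in particular not an integer square.

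The main obstacle is this trigonometric-to-rational simplification. One has to land exactly on the rational function of Proposition~\ref{prop:nonsquare-A-2A-2} up to a square factor, with the right normalization of $t$. I would first factor $4\cos^2\al-1$ and $\sin(\pi/3-\al)$ in terms of $t$, and cancel common linear factors such as $(1-3t)$. If the denominator comes out as $(3t-1)(t+1)$, I would replace $t$ by $-t$, since the target expression is not symmetric in $t$. A numerical spot check with a specific rational $t$ would confirm the constant $\frac23$.
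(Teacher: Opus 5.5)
Your proposal is correct and is essentially the paper's proof; the only difference is that you normalize both areas by the side opposite $\alpha$, obtaining $N=m^2\sin 2\alpha\,\sin 3\alpha/(\sin\beta\,\sin\gamma)$. The paper instead uses the sides opposite $\beta$ and $3\beta$, getting $\sin 2\alpha\,\sin\beta/(\sin 3\alpha\,\sin\gamma)$, and the two differ by the rational square $(\sin 3\alpha/\sin\beta)^2$.

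Your hedging about $t\mapsto -t$ is unnecessary. Using $2\cos\alpha-1=(1-3t)(1+3t)/(1+3t^2)$ and $2\cos\alpha+1=3(1-t)(1+t)/(1+3t^2)$, the computation gives directly
$N=m^2\bigl(12t(1+3t)(1-t)/(1+3t^2)^2\bigr)^2\cdot\frac23\cdot\frac{3t^2-1}{(3t+1)(t-1)}$,
so Proposition~\ref{prop:nonsquare-A-2A-2} applies at $t$ itself.
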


\begin{proof}
Put $\al =A$,
$\be =\pi /3 -\al$ and $\ga =2\pi /3$. Suppose $\sqrt 3 \tan (A/2)\in \qq$. By
the assumption and Proposition~\ref{prop:group-2-alg}, we have $\cos \al ,
\sqrt 3 \sin \al \in \qq$. Then $T$ can be tiled by congruent triangles of
angles $(\al ,\be , \ga )$ \cite[Theorem 2.5]{laczkovich1995tilings}. 

We now show the number of tiles cannot be a square. We may assume that the
sides of $R$ and of $T$ are integers. Then the area of $R$ is $\onehalf
\sin \al \cd \sin \ga / \sin \be$ times a square. The area of $T$ is
$\onehalf \sin \al \cd \sin 2\al / \sin 3\al$ times a square. Note that
$\sin 3\al =\sin \al \cd (2\cos \al -1)(2\cos \al +1)$. Thus the number of
tiles is
\begin{align*}
N =& \frac{\sin 2\al \cd \sin \be}{\sin 3\al  \cd \sin \ga}=
\frac{2\sin \al \cd \cos \al \cd \sin \be}{\sin \al \cd (2\cos \al -1 ) (2\cos \al +1 ) \cd \sin \ga}=\\
& \frac{2\cos \al \cd \sin ((\pi /3)-\al )}{(2\cos \al -1)(2\cos \al +1)}\cd \frac{2}{\sqrt 3}=
\frac{2\cos \al \cd (\cos \al -(\sin \al )/\sqrt 3 )}{(2\cos \al -1)(2\cos \al +1)}
\end{align*}
times a square of a rational number. Set $t = \frac 1 {\sqrt 3} \tan(\alpha/2)$.
By Proposition~\ref{prop:group-2-alg}, we have $0 < t < \frac 1 3$, and applying (2) (from
Proposition~\ref{prop:group-2-alg}), computation yields
$$N=\frac{2}{3}\cd \frac{3t^2 -1}{(3t+1) (t-1)}.$$ 
By Proposition~\ref{prop:nonsquare-A-2A-2}, this cannot be a square; the
hypothesis $t\ne 1,-1/3$ of that proposition  is fulfilled since
$0 < t < 1/3$.
\end{proof}

\begin{prop}
\label{prop:group-1-last-existence}
Let $T$ have incommensurable angles $(A, B, C)$ with $C = A/2 + B$ and
$2\sin(A/4) \in \Q$, equal to $M/K$. Then  
\begin{enumerate}[{\rm (1)}]
\item $T$ has a tiling into $(\alpha, \beta, \gamma)$ with $\alpha = A/2$
and $\beta = B$.
\item For any such tiling, the number of tiles is a square if and only if $2K^2 - M^2$ is a square.
\end{enumerate}
\end{prop}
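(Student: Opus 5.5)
Part (1) follows the same pattern as Propositions~\ref{prop:A-2A-existence1} and \ref{prop:A-2A-existence2}. Put $\alpha = A/2$ and $\beta = B$. Then $A + B + C = \tfrac 32 A + 2B = \pi$ gives $3\alpha + 2\beta = \pi$, and $\gamma = \pi - \alpha - \beta = (\pi+\alpha)/2$. The angles of $T$ are therefore $(2\alpha, \beta, \alpha+\beta)$, which is exactly the last Group~1 configuration of Theorem~\ref{thm:l1995-4.1}. Since $\sin(\alpha/2) = \sin(A/4) = M/(2K)$ is rational, \cite[Theorem 2.4]{laczkovich1995tilings} provides a tiling of $T$ by congruent copies of $(\alpha,\beta,\gamma)$. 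That theorem covers both Group~1 shapes under the hypothesis $\sin(\alpha/2)\in\Q$; this was the one used in Proposition~\ref{prop:A-2A-existence1}. If it only treats $(\alpha,2\alpha,2\beta)$, I would instead cite the explicit construction in \cite{beeson-triangletiling3}.

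For part (2), I would compute $N$ as the ratio of areas and reduce it to $2K^2-M^2$ times a rational square. By Proposition~\ref{prop:group-1-alg}, normalize the tile to $c = 1$, $a = s := 2\sin(\alpha/2) = M/K$ and $b = 1 - s^2$. By the law of sines, the sides of $T$ opposite $2\alpha$, $\beta$, $\alpha+\beta$ are proportional to $\sin 2\alpha$, $\sin\beta$, $\sin\gamma$, since $\sin(\alpha+\beta) = \sin\gamma$. Writing $x = \cos(\alpha/2)$, these are $4(s/2)x(2x^2-1)$, $x(4x^2-3)$ and $x$. Dividing by $x$, they are proportional to $s(1-s^2/2)\cdot 2$, $1-s^2$ and $1$, that is, to $(2s - s^3,\ 1 - s^2,\ 1)$. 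So $T$ has sides $m(2s-s^3)$, $m(1-s^2)$, $m$ for some scale $m$. Because both triangles have rational sides after scaling and the tiling is edge-to-edge in length, $m$ is rational.

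The area ratio is then
\[
N = \frac{\tfrac12 m^2 (1-s^2)\sin(\alpha+\beta)}{\tfrac12 a b \sin\gamma}\cdot(\text{correction})
= m^2 \frac{(1-s^2)}{s(1-s^2)}\cdots
\]
I would carry out this computation carefully with the law of sines. Area of $T$ $=\tfrac12 \cdot (\text{side}_\beta)(\text{side}_\gamma)\sin(2\alpha)$, and area of $R = \tfrac12 b c \sin\alpha$. This gives
\[
N = m^2\,\frac{(1-s^2)\sin 2\alpha}{(1-s^2)\sin\alpha} = 2m^2\cos\alpha = m^2(2 - s^2).
\]
With $s = M/K$, this is $N = (m/K)^2(2K^2-M^2)$. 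Since $m/K$ is rational and $N$ is an integer, $N$ is a square if and only if $2K^2-M^2$ is a square, which gives both directions of (2). This also agrees with the formula $N = 2K^2-M^2$ from \cite[Theorem~4]{beeson-isosceles} used in the forward direction.

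The main obstacle is the rationality of the scale factor $m$. I would justify it as in Proposition~\ref{prop:60-angle-existence}: the tile has commensurable sides by Proposition~\ref{prop:group-1-alg} (or Theorem~\ref{thm:rationality}), and each side of $T$ is a sum of tile sides, so it is an integer combination of $a,b,c$. Hence $m$, which is the side of $T$ opposite $\alpha+\beta$ measured in units of $c$, is rational. A secondary point to check is that ``any such tiling'' really forces this tile shape, but the tile is fixed by the statement, so no further case analysis is needed.
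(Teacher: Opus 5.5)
Your proof is correct. Part (1) is the same as the paper's: you observe that $T=(2\alpha,\beta,\alpha+\beta)$ with $3\alpha+2\beta=\pi$ and then cite \cite[Theorem 2.4]{laczkovich1995tilings}.

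For part (2) you take a different route. The paper computes nothing here. It quotes Theorem~4 of \cite{beeson-triangletiling3}, the tiling equation $m^2+N=2k^2$ with $s=m/k$, and then notes that rescaling $(M,K)$ does not change whether $2K^2-M^2$ is a square. You derive the needed relation directly, in the style the paper itself uses for Propositions~\ref{prop:60-angle-existence}, \ref{prop:A-2A-existence2} and \ref{prop:group-2-last-existence}. You normalize the tile to sides $(s,1-s^2,1)$, so that $T$ has sides $m(2s-s^3,\,1-s^2,\,1)$. The area ratio then gives
$N=2m^2\cos\alpha=m^2(2-s^2)=(m/K)^2(2K^2-M^2)$ with $m\in\Q$. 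Since $N$ and $2K^2-M^2$ are both integers, this settles both directions.

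Your argument is elementary and self-contained. It needs only the rationality of $m$, which you justify correctly: a tile lies in the closed half-plane bounded by a side of $T$, so it meets that side in a vertex or a full edge, and each side of $T$ is therefore a sum of tile edges. The price is that you determine $N$ only up to a rational square factor. The cited theorem gives the exact identity $N=2k^2-m^2$ together with the divisibility $K\mid M^2$, and the paper uses that extra information in the remark preceding the proof to index the tilings as $j^2N_1$.

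In a write-up, delete your first displayed attempt. It computes the area of $T$ using $\sin(\alpha+\beta)$, but the angle between the sides opposite $\beta$ and $\alpha+\beta$ is $2\alpha$. Your second computation is the correct one.
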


\begin{remark} $N = 2K^2 - M^2$ will be the number of tiles, and the tile
$(\alpha,\beta,\gamma)$ will have $\alpha = A/2$ and $3\alpha + 2\beta = \pi$.
The condition that $N$ is not a square cannot be dropped.
  
For example, $(M,K,N) = (5,25,1225)$ satisfies the other conditions, and
$1225 = 35^2$, so there is a $35^2$-tiling of a certain $T$, which is a
``triquadratic'' tiling.  Any other tiling of that $T$ will have the same
$s=2\sin (A/4)$,  so for that tiling we would have $(M,K) = (5\lambda,
25\lambda)$ for some $\lambda\in \Z$, and $N$ would be $1225\lambda^2$, also a
square. 
\end{remark}

\begin{remark}
There will be a smallest pair $(M,K)$ such that there is a tiling with $N_1 =
2K^2-M^2$ tiles; all other tilings will have $\lambda^2 N_1$ tiles, for some
integer $\lambda$.  To see this, we consider any $(M,K)$ and $N=2K^2-M^2$ 
corresponding to a tiling.   
In general, $M/K$ will not be in lowest terms.   
 Write $s = 2\sin (A/4) = m/k$ in lowest terms.  Any tiling
of $T$ of this kind has $M/K = s$, say $(M,K) = (rm,rk)$, and by Theorem~4 of
\cite{beeson-triangletiling3} it satisfies $K \mid M^2$.  Since $\gcd (m,k)=1$,
that condition says $rk \mid r^2m^2$, i.e. $k \mid r$.  Hence $r = jk$ for some
positive integer $j$, and
$$M = jkm, \qquad K = jk^2, \qquad N = 2K^2-M^2 = j^2k^2(2k^2-m^2).$$
Conversely each $j \ge 1$ does occur: these values satisfy the hypotheses of
Theorem~5 of \cite{beeson-triangletiling3}, since $M^2 < N$ amounts to $m<k$,
which holds because $s<1$, and $b = N/K-K = j(k^2-m^2)$ is then a positive
integer.  The tile has side lengths $(km,\,k^2-m^2,\,k^2)$ scaled by $j$, and $T$ has side lengths
$(km(2k^2-m^2),\,k^2(k^2-m^2),\,k^4)$ scaled by $j^2$.

So the tilings of $T$ in this family are indexed by $j=1,2,3,\ldots$; the
smallest has $N_1 = k^2(2k^2-m^2)$ tiles and the rest have $j^2N_1$ tiles.
In particular either all of them have a square number of tiles or none do,
according as $2k^2-m^2$ is a square; that is why (2) of the proposition does
not depend on how $s$ is written as a fraction.

The pair $(m,k)$ itself usually does not
occur, since $k \mid r$ fails for $r=1$ unless $k=1$: for $s=1/2$ we have
$2k^2-m^2 = 7$, but the smallest tiling is $(M,K,N) = (2,4,28)$, with tile
$(2,3,4)$ and $T = (14,12,16)$.  There is no $7$-tiling.  
\end{remark}

\begin{proof}[Proof of Proposition~\ref{prop:group-1-last-existence}]

Assume that $2\sin(A/4)$ is rational, equal to $M/K$. Choose $R = (\alpha,
\beta, \gamma)$ with $\alpha = A/2$ and $\beta = B$.  Then the angles of $T$
are $(2\alpha,\beta, \alpha+\beta)$. By Theorem~2.4 of
\cite{laczkovich1995tilings}, there is a tiling of $T$ into $R$.

By Theorem~4 of \cite{beeson-triangletiling3}, if the tiling has $N$ tiles,
then we can write
$$ m^2 + N = 2k^2,$$
where $s = 2\sin(\alpha/2) = 2\sin(A/4)$ equals the rational number $m/k$. 
Then $N = 2k^2-m^2$.  The fraction $m/k$ is not necessarily in lowest terms,
but scaling $M$ and $K$ does not affect whether $2K^2-M^2$ is a square or not.
Hence, $N = 2k^2 - m^2$ is a square if and only
if $2K^2 - M^2$ is. 
\end{proof}

\begin{prop}
\label{prop:group-2-last-existence} Let $T$ have incommensurable angles
$(A, B, C)$, where 
$$C = 2A + B/2$$ 
with $\sqrt{3} \tan(A/2) \in \qq$.
Then,
\begin{enumerate}[{\rm (1)}]
\item $T$ has a tiling into $(\alpha, \beta, \gamma)$ with $\alpha = A$,
$\beta = B/2$, and $\gamma = 2\pi/3$.
\item For any such tiling, the number of tiles is not a square.
\end{enumerate}
\end{prop}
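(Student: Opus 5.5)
For (1), I would argue as in Propositions~\ref{prop:60-angle-existence} and~\ref{prop:A-2A-existence2}. Set $\alpha = A$, $\beta = B/2$ and $\gamma = 2\pi/3$. Since the angles of $T$ sum to $\pi$, we have $A + B + 2A + B/2 = \pi$, so $3\alpha + 3\beta = \pi$ and $\beta = \pi/3 - \alpha$. The angles of $T$ are then $(\alpha, 2\beta, 2\alpha+\beta)$, which is the last Group~2 configuration of Theorem~\ref{thm:l1995-4.1}. By hypothesis and Proposition~\ref{prop:group-2-alg}, both $\cos\alpha$ and $\sqrt3\sin\alpha$ are rational. Theorem~2.5 of \cite{laczkovich1995tilings} should therefore give a tiling of $T$ by $R=(\alpha,\beta,\gamma)$.

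For (2), I would compute $N$ as the ratio of areas. By Proposition~\ref{prop:group-2-alg}, $R$ has commensurable sides proportional to $(\sin\alpha,\sin\beta,\sin\gamma)$. The sides of $T$ are proportional to $(\sin\alpha,\sin 2\beta,\sin(2\alpha+\beta))$; since $T$ is tiled by $R$, each side of $T$ is an integer combination of the tile's sides, so $T$ can also be scaled to have integer sides. Write the sides of $T$ as $\lambda(\sin\alpha,\dots)$ and those of $R$ as $\mu(\sin\alpha,\dots)$. Both $\lambda\sin\alpha$ and $\mu\sin\alpha$ are then rational, so $\lambda/\mu\in\qq$. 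Using area $=\tfrac12 ab\sin(\text{included angle})$, this gives, up to the rational square factor $(\lambda/\mu)^2$,
$$N \sim \frac{\sin 2\beta\,\sin(2\alpha+\beta)}{\sin\beta\,\sin\gamma} = 2\cos\beta\cdot\frac{\sin(\pi/3+\alpha)}{\sin(2\pi/3)} = 2\cos\beta\,\Bigl(\cos\alpha+\tfrac{\sin\alpha}{\sqrt3}\Bigr),$$
where I used $2\alpha+\beta = \pi/3+\alpha$. Next I substitute \eqref{eq:etrig} with $t=\tan(\alpha/2)/\sqrt3$. The expected values are
$$2\cos\beta = \cos\alpha+\sqrt3\sin\alpha = \frac{1+6t-3t^2}{1+3t^2}, \qquad \cos\alpha+\frac{\sin\alpha}{\sqrt3} = \frac{(1+3t)(1-t)}{1+3t^2}.$$
Their product is $(3t^2-6t-1)(3t+1)(t-1)/(1+3t^2)^2$. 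Dividing by the square $\bigl((t-1)(3t+1)\bigr)^2/(1+3t^2)^2$ shows that $N$ equals, up to a nonzero rational square,
$$\frac{3t^2-6t-1}{(t-1)(3t+1)}.$$
This is exactly the expression in Proposition~\ref{prop:nonsquare4}.

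Finally, \eqref{eq:268} gives $0<t<1/3$ because $\alpha+\beta=\pi/3$. So $t\ne 0,1,-1/3$, and Proposition~\ref{prop:nonsquare4} shows $N$ is not a rational square. The step I expect to need the most care is the reduction modulo squares. One must confirm that the proportionality constant $\lambda/\mu$ really is rational, and that none of the trigonometric factors (such as $\sin\gamma=\sqrt3/2$) leaves a stray non-square factor like $3$. Getting either wrong would send the problem to the wrong elliptic curve, so I would double-check the rational function numerically on a sample value of $t$ before invoking Proposition~\ref{prop:nonsquare4}.
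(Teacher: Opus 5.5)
Your proposal is correct and follows essentially the same route as the paper. The tile is the same, existence again comes from Theorem~2.5 of \cite{laczkovich1995tilings}, and the area-ratio computation reduces $N$, modulo rational squares, to $\tfrac{3t^2-6t-1}{(t-1)(3t+1)}$ before Proposition~\ref{prop:nonsquare4} is applied. The differences are cosmetic: your two-sides-and-included-angle area formula gives an expression that differs from the paper's $\sqrt3\cos\beta/\sin(\pi/3+\alpha)$ by the rational square $(\cos\alpha+\sin\alpha/\sqrt3)^2$, which you correctly divide out, and you spell out why the scale ratio $\lambda/\mu$ is rational, which the paper leaves implicit.
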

\begin{proof}  Let $\alpha = A$,
$\beta = B/2$, and $\gamma = 2\pi/3$.  Since $A+B+C = \pi$ and $C= 2A+B/2$,
we have $\alpha+\beta  = \pi/3$. 
By assumption and Proposition~\ref{prop:group-2-alg}, $\sqrt 3
\sin \al$ and $\cos \al$ are rational. By Theorem~2.5 of
\cite{laczkovich1995tilings}, there is a tiling of $T$ into $R$. We have
to show that the number of tiles is not a square. By Proposition
\ref{prop:group-2-alg}, we have
$$t = \frac{\tan(\alpha/2)}{\sqrt 3} \in \Q, \text{where} \ 0<t<1/3.$$
We may assume that the sides of $R$ and of $T$ are integers. Then the area of
$R$ is $\onehalf \sin \al \cd \sin \be / \sin \ga$ times a square.
The area of $T$ is $\onehalf \sin \al \cd \sin (2\be ) / \sin ((\pi /3)+\al )$
times a square. Thus the number of tiles is
\begin{align*}
N =& \frac{\sin \ga \cd \sin (2\be )}{\sin \be \cd \sin ((\pi /3)+\al )} =
\frac{\sqrt 3 \cd \cos \be}{\sin ((\pi /3)+\al )}=\\
& \sqrt 3 \cd  \frac{\cos ((\pi /3)-\al )}{\sin ((\pi /3)+\al )}=
\sqrt 3 \cd  \frac{\ha  \cos \al +\frac{\sqrt 3}{2} \sin \al}{\frac{\sqrt 3}{2} \cos \al +\ha \sin \al} =\\
& \frac{1+6t-3t^2}{1+2t-3t^2}=\frac{3t^2 -6t-1}{(t-1)(3t+1)}
\end{align*}
times a square of a rational number. By Proposition~\ref{prop:nonsquare4}, this
cannot be a square, so we indeed have a non-square tiling.
\end{proof}

\section{Usually only one tile is possible, and at most two}
\label{sec:uniqueness}

Some triangles $T$ match more than one row of the table in
Proposition~\ref{prop:TtoR}. For example, if $T$ matches the first row,
then it matches the second row as well. Indeed, if $\sqrt{3}\cd \tan (A/4)$ is
rational, then so is $\sqrt{3}\cd \tan (A/2)$, since $\tan ^2 (A/4) \in \qq$,
and thus
$$\sqrt{3}\cd \tan (A/2)=\frac{2\sqrt{3}\cd \tan (A/4)}{1-\tan ^2 (A/4)}
\in \qq .$$
In Proposition~\ref{prop:60-angle-existence} we proved that if $T$ matches the
second row with $A < B$, then it can be tiled by a tile with angles $\al, \be, 2\pi /3$,
where $\alpha = A$ and $\beta = \pi/3-A$. 
Now we show that if $T$ matches the
first row, then it has a non-square tiling using a different tile.

\begin{prop} \label{prop:60-angle-existence-2}
Let $T$ have angles $(A, B, \pi/3)$ such that 
$$\sqrt{3} \cd \tan\left(A/4\right)\in \mathbb{Q}.$$
Then
\begin{enumerate}[{\rm (1)}]
\item $T$ has a tiling into $(\alpha, \beta, \gamma)$ with $\alpha = A/2$,
$\beta = B/2$, and $\gamma = 2\pi/3$.
\item For any such tiling, the number of tiles is not a square.
\end{enumerate}
\end{prop}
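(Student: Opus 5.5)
Set $\alpha = A/2$, $\beta = B/2$, $\gamma = 2\pi/3$. Since $A+B = 2\pi/3$, we get $\alpha+\beta = \pi/3$, so the angles of $T$ are $(2\alpha, 2\beta, \alpha+\beta)$, the first row of the table in Proposition~\ref{prop:TtoR}. The hypothesis gives $\sqrt 3\tan(\alpha/2)\in\qq$. By Proposition~\ref{prop:group-2-alg}, $\cos\alpha$ and $\sqrt3\sin\alpha$ are rational and $R$ has commensurable sides. Existence of the tiling should then follow from \cite[Theorem 2.5]{laczkovich1995tilings}, which covers $(2\alpha,2\beta,\alpha+\beta)$ with $\gamma=2\pi/3$ under this rationality condition, just as it was used for the other Group~2 shapes. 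If $T$ is commensurable, the argument in the right-to-left proof shows $T$ is equilateral, giving $\alpha=\beta=\pi/6$. Any tiling by the $(\pi/6,\pi/6,2\pi/3)$ tile then has $N=3M^2$ tiles, which is not a square, so we may assume incommensurable angles.

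For (2), scale the tile to have integer sides $a,b,c$ opposite $\alpha,\beta,\gamma$. Then $c^2=a^2+ab+b^2$. By the law of sines, $T$ has sides proportional to $\sin 2\alpha,\ \sin2\beta,\ \sin(\pi/3)$. Using $\sin 2\alpha = 2\sin\alpha\cos\alpha$ and $a:b:c=\sin\alpha:\sin\beta:\sin(\pi/3)$, the sides of $T$ are proportional to
$$\left(2a\cos\alpha,\ 2b\cos\beta,\ c\right).$$
The law of cosines in $R$ gives $\cos\alpha = (b^2+c^2-a^2)/(2bc) = (2b+a)/(2c)$, after substituting $c^2=a^2+ab+b^2$. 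Symmetrically, $\cos\beta=(2a+b)/(2c)$. So the sides of $T$ are proportional to
$$\bigl(a(a+2b),\ b(2a+b),\ c^2\bigr),$$
and equal $m$ times these for a rational $m$.

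The number of tiles is the ratio of areas. Both triangles contain an angle with sine $\sqrt3/2$: the angle $\pi/3$ of $T$ lies between its sides $a(a+2b)m$ and $b(2a+b)m$, and $\gamma$ lies between $a$ and $b$. Hence
$$N=\frac{a(a+2b)\,b(2a+b)\,m^2}{ab} = (a+2b)(2a+b)\,m^2.$$
So $N$ is a square iff $(a+2b)(2a+b)$ is a rational square. The remaining task is to show this cannot happen when $a^2+ab+b^2$ is a square. Write $b/a=u$, so the conditions become $u^2+u+1=\square$ and $(1+2u)(2+u)=\square$.

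This is the main obstacle: a genus-one intersection of two quadrics. First parametrize the conic $u^2+u+1=w^2$ rationally. Through the point $(0,1)$ this gives $u = (1-2\lambda)/(\lambda^2-1)$, or equivalently in terms of $t=\tan(\alpha/2)/\sqrt3$ via \eqref{eq:etrig}, which is likely cleaner: $b/a=\sin\beta/\sin\alpha$ is a rational function of $t$. Substituting into $(1+2u)(2+u)$ and clearing squares should give a quartic $s^2=q(t)$ with a rational point. Lemma~\ref{lem:7.2.2}, or a direct substitution as in Propositions~\ref{prop:nonsquare-A-2A-2}--\ref{prop:nonsquare4}, then converts it to a Weierstrass cubic. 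I would try to show this curve has rank $0$ via LMFDB/2-descent and list its torsion points. The final check is that every torsion point corresponds to a degenerate parameter: $t\in\{0,1/3\}$, $a$ or $b=0$, or the isosceles $a=b$ case $u=1$. Note that $u=1$ gives $9=\square$ and must be excluded because $T$ would be equilateral, already handled. This mirrors Proposition~\ref{prop:nt-squares}, and I expect the curve to be a twist closely related to 24.a or 36.a.
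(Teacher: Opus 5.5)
Your reduction is correct, and tidier than the paper's. The sides of $T$ are proportional to $a(a+2b)$, $b(2a+b)$, $c^2$, and $N=(a+2b)(2a+b)\,m^2$. This agrees with the paper's $N=4\cos\alpha\cos\beta$ up to a square. For the $(3,5,7)$ tile it gives $T\sim(39,55,49)$ and $N=143m^2$, matching the $3575$- and $7007$-tilings. Existence via Theorem~2.5 of Laczkovich is also what the paper uses.

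The gap is the decisive step. You never prove that $(a+2b)(2a+b)$ is not a square when $a^2+ab+b^2$ is, and the route you sketch cannot be carried out. The genus-one curve $a^2+ab+b^2=c^2$, $(a+2b)(2a+b)=d^2$ has no rational points at all. Concretely, your quartic $s^2=(\lambda^2-4\lambda+1)(2\lambda^2-2\lambda-1)$ has no rational point, and neither does its $t$-version $s^2=2(1-3t^2)(1+6t-3t^2)$. This includes points at infinity, since the leading coefficients $2$ and $18$ are not squares. So, contrary to your expectation, there is no rational point to move to infinity. Lemma~\ref{lem:7.2.2}, which needs the form $s^2=t^4+at^2+b$, does not apply. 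The curve is not birational over $\mathbb{Q}$ to any Weierstrass cubic, so a rank-and-torsion computation on some related elliptic curve cannot settle the question.

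The missing idea is a $3$-adic obstruction, which is exactly how the paper finishes. Writing $t=p/q$ in lowest terms, it observes $2(q^2-3p^2)(q^2+6pq-3p^2)\equiv 2q^4 \pmod 3$. This forces $3\mid q$, and after dividing out $9$, also $3\mid p$.

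In your variables the argument is even shorter:
\begin{enumerate}
\item Normalize so that $\gcd(a,b)=1$.
\item Since $(a+2b)(2a+b)\equiv -(a-b)^2 \pmod 3$ and $2$ is not a square mod $3$, squareness forces $a\equiv b \pmod 3$, hence $3\nmid ab$.
\item But then $c^2=(a-b)^2+3ab\equiv 3ab\not\equiv 0 \pmod 9$, so $c^2$ would be divisible by $3$ exactly once, which is impossible.
\end{enumerate}
The two arguments are really the same: substituting $a:b=4t:(1-3t)(1+t)$ turns $(a+2b)(2a+b)$ into $2(1-3t^2)(1+6t-3t^2)$ times a square.

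Two smaller slips:
\begin{itemize}
\item The commensurable case is vacuous, not equilateral. Since $0<A<2\pi/3$, we get $0<\tan^2(A/4)<1/3$, which Lemma~\ref{lem:niventan} excludes. The equilateral triangle does not even satisfy the hypothesis, as $\sqrt{3}\tan(\pi/12)=2\sqrt{3}-3$.
\item $u=1$ is not a point of your curve at all, since $u^2+u+1=3$ is not a square.
\end{itemize}
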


\begin{proof}
Let $\alpha ,\be ,\ga$ be as in (1). Then $T=(2\alpha, 2\beta, \alpha+\beta)$.
By assumption and Proposition~\ref{prop:group-2-alg}, $\sqrt{3} \cd  \sin \al$
and $\cos \al$ are rational. By Theorem~2.5 of \cite{laczkovich1995tilings},
there is a tiling of $T$ into $R$. We will show that the number of tiles is
not a square.

By Proposition \ref{prop:group-2-alg}, the tile has commensurable sides, and
then so has $T$. We may assume that the sides of the tiles and of $T$ are
integers. Then the area of $T$ equals $\onehalf \cd \sin (2\al )\cd \sin (2\be)
/\sin \pi /3$ times a square, and the area of the tiles equals
$\onehalf \cd \sin \al \cd \sin \be /\sin 2\pi /3$ times a square. Thus the
number of tiles is
\begin{align*}
N&=\frac{\sin (2\al )\cd \sin (2\be)\cd \sin 2\pi /3}{\sin \al \cd \sin \be
  \cd \sin \pi /3}=4\cos \al \cd \cos \be \\
&=4\cos \al \cd \left( \ha \cos \al +\frac{\sqrt 3}{2} \sin \al \right)
\end{align*}
times the square of a rational. We show that $N$ is not the square of a
rational.

Put $t=\tfrac{1}{\sqrt 3} \cd \tan (\al /2)$. By Proposition
\ref{prop:group-2-alg} we find that $t\in \qq$, $t>0$, and \eqref{eq:etrig}
holds. Therefore, we have
$$N=2\cd \frac{(1-3t^2 )(1+6t-3t^2 )}{(1+3t^2 )^2}.$$
If $N$ is the square of a rational, then so is
$$M=2\cd (1-3t^2 )(1+6t-3t^2 ).$$
Let $t=a/b$, where $a,b$ are coprime positive integers. If $M$ is 
the square of a rational, then
$$P=2\cd (b^2 -3a^2 )(b^2 +6ab -3a^2 )$$
is a square. Now $P\equiv 2b^4$ (mod $3$). If $b$ is not divisible by $3$,
then $b^2 \equiv 1$ and $P\equiv 2$ (mod $3$), which is impossible. Thus
$3\mid b$. If $b=3c$, then $P=9Q$, where
$$Q=2\cd (3c^2 -a^2 )(3c^2 +6ac -a^2 ),$$
and $Q$ is also a square. Since $Q\equiv 2a^4$ (mod $3$), the argument
above gives $3\mid a$. This, however, contradicts $\gcd (a,b)=1$, and thus
$N$ cannot be the square of a rational.
\end{proof}

\begin{exmp}\label{ex1}
Let $R$ be the triangle with sides $3,5,7$, and let $\al ,\be ,\ga$ be the
angles of $R$ opposite to the sides $3,5,7$, respectively. Since $7^2 =3^2 +5^2
+3\cd 5$, we have $\cos \ga =-1/2$ and $\ga =2\pi /3$. Then
$$\sin \al =\frac{3}{7} \cd \sin \ga =\frac{3\sqrt 3}{14},$$
$\cos \al =13/14$ and $\sin (2\al )=78\sqrt{3}/14^2$.

Let $T$ be a triangle with angles $A=2\al$, $B=2\be$, $C=\pi /3$.
It is easy to check that
$$\sin (2\alpha): \sin (2\be ): \sin \tfrac{\pi}{3} =39:55:49,$$
and thus the proportion of the sides of $T$ is $39:55:49$. 

Let $S$ be a triangle with angles $(2\alpha, \tfrac{\pi}{3} -2\alpha,
\tfrac{2\pi}{3})$. The
proportion of the sides of $S$ is given by
$$\sin (2\alpha): \sin (\tfrac{\pi}{3}-2\al ): \sin \tfrac{2\pi}{3} =39:16:49.$$
Then the conditions of Propositions \ref{prop:60-angle-existence} and
\ref{prop:60-angle-existence-2} are satisfied, and thus $T$ can be tiled
with congruent triangles similar to $R$, and also
with congruent triangles similar to $S$. 

The tilings of $T$ with congruent triangles similar to $R$ and to $S$
are illustrated in Figure~\ref{figure:tiling3575} and
Figure~\ref{figure:tiling126720}. The numbers of tiles required are 3575 and 126720,
respectively.

The 126720 tiles are too many to be seen on a printed page, so the figure shows
only the triangles that should be quadratically tiled, and  the rectangles
that should be tiled by tiles oriented in the same direction. Nevertheless,
one can see that the  tiled triangles have the same shape, while the tiles
are  different.  
\end{exmp}

Next we show that there are triangles that match the third and the fourth rows
of the table in Proposition~\ref{prop:TtoR} simultaneously.

Let $\sin (A/2)=(t^2 -3)/(t^2 +3)$, where $\sqrt 3 <t<3$. Then $\sin (A/2)\in
(0,1/2)$, $A/2<\pi /6$ and $A<\pi /3$. Let the angles of the triangle $T$ be
$A$, $B=2A$ and $C=\pi -3A$. We show that if $t$ is rational, then $T$
matches both the third and the fourth rows of the table. It is obvious that $T$
matches the fourth row, as $\sin (A/2)\in \qq$. We have
$$\cos ^2 (A/2)=1- \left( \frac{t^2 -3}{t^2 +3} \right) ^2
  =\frac{12t^2}{(t^2 +3)^2} .$$
This implies $\cos(A/2)=2\sqrt{3} t/(t^2 +3)$, $\sqrt{3} \cd \cos (A/2)\in \qq$
and $\sqrt{3} \cd \tan (A/2)\in \qq$, proving that $T$ matches the third row
as well.

\begin{exmp}\label{example:ex2}
If $t=2$, then $\sin (A/2)=1/7$, $\cos (A/2)=4\sqrt{3} /7$,
$\sin A =8\sqrt{3}/49$, $\cos A=47/49$, $\sin (2A)=752\sqrt{3}/49^2$, and
$\sin (3A)=51480\sqrt{3}/49^3$. Thus
$$\sin A:\sin B:\sin C=\sin A:\sin (2A):\sin (3A)=2401:4606:6435$$
gives the proportion of the sides of $T$.

Let the triangle $R$ have angles $(A,\tfrac{\pi}{3}-A, \tfrac{2\pi}{3})$, and
let the triangle $S$ have angles $(A,\tfrac{\pi}{2}-\tfrac{3A}{2} ,
\tfrac{\pi}{2} + \tfrac{A}{2})$.
It is easy to check that the proportion of the sides of $R$ is given by
$$\sin A:\sin (\tfrac{\pi}{3}-A): \sin (\tfrac{2\pi}{3})=16:39:49,$$
and the proportion of the sides of $S$ is given by
$$\sin A:\sin (\tfrac{\pi}{2}-\tfrac{3A}{2}): \sin (\tfrac{\pi}{2} +
\tfrac{A}{2})=14:45:49.$$

By Propositions \ref{prop:A-2A-existence1} and \ref{prop:A-2A-existence2},
$T$ can be tiled by congruent triangles similar to $R$, and also
with congruent triangles similar to $S$.

These tilings provided by the proofs of Propositions
\ref{prop:A-2A-existence1} and \ref{prop:A-2A-existence2} 
use too many tiles for the individual tiles to be drawn. Therefore we have
prepared figures showing only the outline of the quadratic tilings and 
parallelograms used in the tilings.  The figures at least make clear that the same
triangle $T$ can  be tiled with a Group~1 tiling and a Group~2 tiling.
See Figure~\ref{figure:example2}. The number of tiles for the first 
tiling is $N = 31203926019578312$.%
\footnote{
This was computed directly by adding the individual quadratic tiling numbers.  We checked it with \cite[Theorem~9]{beeson-triangletiling3}.
}
For the second tiling, $N$ is more than $10^{20}$. 
Figure~\ref{figure:Prop18Details} blows up a part of that tiling to make some regions
visible that cannot be seen at the scale of Figure~\ref{figure:example2}. 
\end{exmp}

Next we show that, aside from the two exceptional cases discussed above, there
is only one possible tile that can be used to tile a given triangle $T$.

\begin{thm} \label{thm:possibleR}
Let $T$ be a non-isosceles triangle that admits a non-square, non-reptile
tiling. Then
\begin{enumerate}[{\rm (1)}]
\item If after a permutation of the angles $(A,B,C)$, we have 
$$C = \pi/3, \  A<B, \ \text{and} \ \sqrt{3} \cd \tan(A/4) \in \Q,$$
then $T$ has a non-square, non-reptile tiling using two different tiles
$(\alpha,\beta, 2\pi/3)$, with $\alpha = A$ or $\alpha = A/2$, and those are
the only such tiles, up to similarity.
\item If $B=2A$ and both $\sin(A/2)$ and $\sqrt{3} \cd \tan(A/2)$ are rational,
then $T$ has a non-square, non-reptile tiling using two different tiles $(\alpha,
\beta, 2\pi/3)$ and $(\alpha, \tfrac{3}{2} \beta, \gamma)$, and those are the
only such tiles, up to similarity. 
\item Otherwise, $T$ matches exactly one row of the table in 
Proposition~\ref{prop:TtoR}, and the angles of the tile are uniquely
determined by $T$.
\end{enumerate}
\end{thm}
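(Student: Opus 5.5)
The plan is to work entirely from Proposition~\ref{prop:TtoR} together with the existence results of Section~\ref{sec:existence}. Since $T$ is non-isosceles and has a non-reptile tiling, Lemma~\ref{lem:T-pov-cases} and Proposition~\ref{prop:TtoR} say that every tile $R$ used in such a tiling has angles fixed by the row of the table that $T$ matches, together with the chosen permutation of $(A,B,C)$. So the tile is determined by the pair (row, permutation). The task is therefore to decide which pairs can hold at the same time. For each row, the angles of $T$ are written as fixed $\Z$-combinations of $\alpha$ and $\beta$, and $\alpha,\beta$ are linearly independent over $\Q$ together with $\pi$. Hence once a row is fixed, $(\alpha,\beta)$ is recovered from the angles of $T$. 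The propositions preceding this theorem already carry out this recovery row by row.

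Next I would examine which rows can coexist. Write $T$'s angle multiset in each row's form and check every pair of rows, using incommensurability to rule out coincidences.
\begin{itemize}
\item The first two rows both need an angle equal to $\pi/3$. The other rows need relations such as $B=2A$, $C=A/2+B$ or $C=2A+B/2$. Each of these, combined with $\pi/3$ being an angle, forces some angle to be a rational multiple of $\pi$; this is the argument already used in Propositions~\ref{prop:60-angle-only-if}--\ref{prop:group-2-last-only-if}.
\item Rows 3 and 4 ($B=2A$) are incompatible with rows 5 and 6. The reason is that a triangle with $B=2A$ and also $C=A/2+B$, or a permuted version of this, has commensurable angles. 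To verify this, substitute into $A+B+C=\pi$ over every permutation and note that the only solutions are rational multiples of $\pi$.
\item Rows 5 and 6 are mutually exclusive, because they lie in Group~1 and Group~2 respectively. I would run the linear-independence count of Proposition~\ref{prop:group-2-last-only-if}: $3p+4s=6$ with $3q+4t=6$ forces a zero angle.
\end{itemize}
The only surviving coincidences are rows 1 and 2, and rows 3 and 4. These give cases (1) and (2) of the theorem. In case (1), the tiles are $\alpha=A/2$ (row 1) and $\alpha=A$ (row 2), and both are realized by Propositions~\ref{prop:60-angle-existence} and \ref{prop:60-angle-existence-2}. Note that row 1 implies row 2, as computed before Proposition~\ref{prop:60-angle-existence-2}. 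In case (2), both tiles exist with non-square counts by Propositions~\ref{prop:A-2A-existence1} and \ref{prop:A-2A-existence2}. For row 4 the tile has angles $(\alpha,\beta',\gamma')$ with $2\beta'=\pi-3\alpha=3\beta$, where $\beta$ is row 3's angle $\pi/3-\alpha$; this gives the stated form $\tfrac32\beta$.

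Within a single row, I must also rule out two different permutations of $(A,B,C)$ producing different tiles. For rows 3 through 6 the relation defining the row singles out the roles of the angles: in $B=2A$ the angles $A$ and $B$ are determined, except that a second relation $B'=2A'$ among the same three angles again forces commensurability. For row 2, the choice $A<B$ fixes which of the two non-$\pi/3$ angles serves as $\alpha$. Here I would check that taking $\alpha=B$ instead gives the triangle $(B,B+2\beta',\cdot)$ with $\beta'=\pi/3-B<0$ when $B>A$; this is impossible since $A+B=2\pi/3$ forces $B>\pi/3$.

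I expect the main obstacle to be the exhaustive pairwise exclusion across rows and permutations. There are many small cases, and each must be reduced to an angle being a rational multiple of $\pi$. I would organize it by writing every row as linear equations in $(A,B,C,\pi)$, then checking that any two non-exceptional rows together give a system of full rank over $\Q$. Full rank means the angles are commensurable, contradicting Theorem~\ref{thm:l1995-5.3}.
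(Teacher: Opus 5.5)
Your proposal is correct and follows essentially the paper's route. It rules out every coexistence of rows, and of different angle assignments within a row, because two independent rational relations among $A,B,C$, together with $A+B+C=\pi$, would make the angles commensurable, contradicting Theorem~\ref{thm:l1995-5.3}; it then realizes the two exceptional pairs via Propositions~\ref{prop:60-angle-existence}, \ref{prop:60-angle-existence-2}, \ref{prop:A-2A-existence1} and \ref{prop:A-2A-existence2}. Two small slips, neither load-bearing: $\alpha,\beta,\pi$ are not $\Q$-linearly independent, since $3\alpha+2\beta=\pi$ or $3\alpha+3\beta=\pi$ (only $\alpha,\beta$ are, which is all you use), and the count $3p+4s=6$, $3q+4t=6$ comes from Proposition~\ref{prop:group-1-last-only-if}, not Proposition~\ref{prop:group-2-last-only-if}.
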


\begin{proof} (1) After a permutation of $(A,B,C)$, we have $\sqrt{3} \cd
\tan(A/2)\in \Q$ and $C = \pi/3$. Since that condition holds for both angles other than 
$C$, we may choose the permutation with $A<B$.
Then $T$ matches the first two rows of the table in Proposition~\ref{prop:TtoR}, 
since $\sqrt{3}\tan(A/4) \in \Q$ implies $\sqrt{3}\tan(A/2) \in \Q$.
Then it 
can be tiled by the tiles mentioned in this proposition, as has already been
seen in Proposition \ref{prop:60-angle-existence-2}. It remains to show no
other tiling is possible; that is, $T$ does not match any other row of the
table after the second row. Remember that the permutation of $(A,B,C)$ to
match another  row might be different than the one that makes it match
the first row.

Note that, by Theorem~\ref{thm:l1995-5.3}, the angles of $T$ are incommensurable.
We will show that  in any case there will be a rational linear relation $pA+qB+rC = 0$
with $p\neq q$.  
To prove this, observe that the entries in the first column of the table, 
apart from the first two rows, can be written in the form $pA + qB + rC = 0$, 
with no two coefficients equal.
Then after a permutation of $(A,B,C)$, they still have
this form.  Then we have in the original permutation,
$$ 
\matrix 0 0 1  1 1 1 p q r  \vector A B C = \vector {\pi/3} {\pi} 0.
$$
and because $p \neq q$, the determinant is not zero. 
Therefore the equation can be solved, making the angles
all rational multiples of $\pi$. But that contradicts
 the incommensurability hypothesis.

(2) In part (2) of the theorem,  one angle of $T$ is double another.  First we show that 
$T$ does not match the second row under any permutation.  If it 
did, one angle would be $\pi/3$.  
The $\pi/3$ angle cannot be 
double or half another angle, for then the angles would be commensurable. 
Then the angles would be $(\pi/3, X, 2X)$, so $X = 2\pi/9$ and the angles
would be commensurable.  Hence, $T$ does not match the second row under
any permutation, as claimed.

Since one angle is double another, $T$ matches the third and the fourth rows. 
Then $T$ does not also match row 5 or row 6, since
(reading the third column of the table) that would
require a nontrivial linear relation between $\alpha$ and
$\beta$, contradicting incommensurability.  We are not done yet, because 
we have to consider the possibility that
$T$ matches the third and fourth rows, but with a different matching of the
angles of $T$ to $(A,B,C)$.  Since one of the angles is double another,
 there is a permutation $(p,q,r)$ of $(-2,1,0)$
such that  $pA + qB + rC = 0$. Then $p+q+r=-1$ and 
$$
\matrix 1 1 1 {-2} 1 0 p q r \vector A B C = \vector \pi 0 0.
$$
The determinant is $3r-2q - p$. By the incommensurability of the angles of $T$, this
determinant
must be zero.  But for every non-identity permutation
$(p,q,r)$ of $(-2,1,0)$,
we have $3r-2q -p \neq 0$,
contradiction.   Therefore $(p,q,r)$ is the identity
permutation, i.e., $(p,q,r) = (-2,1,0)$.  
Now both the third and the fourth lines of the table
require $A=\alpha$ and $B = 2\alpha$,  so $\alpha$ is 
the same in both tilings. Let $(\alpha,\beta,2\pi/3)$
be the tile from the third line, and $(\alpha, \beta^\prime,\gamma)$
the tile from the fourth line.  Then $C= 3\beta = 2\beta^\prime$.
Then $\pi/3 = \alpha + \beta  = \alpha + \frac 2 3 \beta^\prime$.
Then (2) of the theorem holds.

(3) $T$ matches the last two rows 
of the table (but not any of the first four).
Then,  reading the last column of the table,
we would have both $3\alpha + 2\beta = \pi$,
and $3\alpha+ 3\beta  = \pi$, and subtracting 
we have $\beta =0 $, contradiction.  
\end{proof}

\begin{proof}[Proof of Theorem \ref{thm:non-reptiling}]
Suppose a non-isosceles $T$ is tiled, and the tiling is not a reptiling. Then
by Lemma \ref{lem:T-pov-cases} and Proposition~\ref{prop:TtoR}, $T$
falls under one of the lines listed in the table of the proposition.
In Section \ref{sec:existence} we saw that in all cases, except when $T$
falls under the first or the fifth line, the tiling must be a non-square
tiling. If $T$ falls under the first line, then the same is true by
Proposition \ref{prop:60-angle-existence-2}.

Finally, if $T$ falls under the fifth line then, by Proposition
\ref{prop:group-1-last-existence}, we can have a square tiling only
if (2) of Theorem \ref{thm:non-reptiling} holds.
\end{proof}

\section{Examples}
\label{sec:examples}

The second half of our main theorem required proving the existence of certain
tilings. Our work does not require the actual construction of these tilings,
but only to prove their existence. Nevertheless, in this section, we will
actually  {\em exhibit} those tilings. These pictures are not required for
existence, but we include them for illustration.

\begin{remark}
Erd\H{o}s Problem~633 only requires us to ``classify'' the tiled triangle
$T$; but more  generally one may wish to classify the tilings themselves, with
the most general goal of understanding all triples $(T, R, N)$. In fact, a
related (also \$25, but in our opinion much harder) Erd\H{o}s Problem
\cite[p.~48]{Soifer2009-book} (indexed as Problem 634 in \cite{erdos-634}) can be seen as a
specialization of this goal, but focusing on $N$, the number of tiles,
instead of $T$ as Problem 633 does. In this light, we can see many of the
works in the literature as studying different aspects of the general problem,
such as \cite{laczkovich1995tilings} laying the groundwork for understanding
all possible $T$ and $R$ without considering $N$, or \cite{beeson-equilateral},
\cite{beeson-isosceles}, \cite{beeson-triangletiling3},
\cite{laczkovich2020rational}, and \cite{zhang2026}
studying what $(T,R,N)$ can occur if a particular choice of $T$ or $R$ is fixed.
To that end, it is interesting to view the possible tilings.  
\end{remark}

Theorem~2.5 of \cite{laczkovich1995tilings} established existence for all
these tilings. This work focused on the construction of dissections of 
a triangle $T$ into similar (rather than congruent) triangles. It was noted
that if the  sides are commensurable, then one can, by choosing a small
enough tile, refine a dissection into similar triangles to a tiling. The
proof proceeds by exhibiting sketches of dissections into similar triangles
or parallelograms, which can then be tiled by congruent triangles if the
tiles are  chosen small enough, provided the sides are commensurable. 
That method was also used by  the first and third authors in their contributions
to the existence part of our proof\footnote{
It is well-known that diagrams can be misleading 
in geometry, and that phenomenon does occur in the diagram 
of a tiling of an isosceles triangle at the top left of p.~84
in \cite{laczkovich1995tilings}, which cannot actually be drawn if
the line extending southeast from the ``center'' hits the 
base of the triangle rather than the east side as shown. One can prove
that this happens if $\al <\al _0$, where $\al _0 \approx 38.29^\circ$.
(The figure in \cite{laczkovich1995tilings} is drawn with 
$\alpha = 40^\circ$.)
This error does not affect the validity of
\cite[Theorem 2.5]{laczkovich1995tilings}; in the case $\al <\al _0$
a different construction works.
See Figure~\ref{figure:isoscelescorrect} for a correct dissection
of that triangle in this case.   Observe that the boundary between
green and red tiles in that figure does hit the base, rather than the east side
of the triangle.
}.

The examples in this section (except for the triquadratic tilings) were
constructed by the method of refining a dissection into similar triangles to
a tiling.  We started with dissections given in \cite{laczkovich1995tilings}.
But a direct application  of this method often results in tilings too large
to draw, because we have to choose a very small unit to make all the 
edges in the tiling multiples of that unit. If $N$ is more than about
fifteen thousand, the tiling cannot actually be drawn attractively on a
normal page. Luckily, we could use the method of Herdt%
\footnote{Private communication, 10 January, 2024; Herdt's method is 
explained in section 15 of \cite{beeson-isosceles} and developed more
theoretically in \cite{zhang2026}.}
to reduce the number of tiles required, so that we could produce illustrations
of all the cases in Theorem~\ref{thm:main}.

First we illustrate that every triangle has a ``quadratic tiling'' into $N^2$
tiles, for any $N$. Such a tiling is constructed by drawing $N-1$ equally
spaced lines parallel to each side of the triangle.
See Figure~\ref{figure:quadratic}, where $N=10$ produces a 100-tiling.

\begin{figure}[ht]
\centering
\includegraphics[width=0.3\linewidth]
{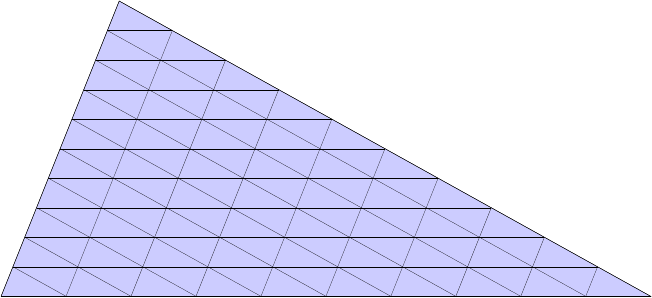}
\caption{Every triangle has quadratic tilings.}
\label{figure:quadratic}
\end{figure}

Case (1) in Theorem~\ref{thm:main} is  when $T$ is isosceles.
There are many interesting ways to tile various isosceles triangles; see
\cite{beeson-isosceles} for a full discussion of the non-equilateral case,
and \cite{beeson-equilateral} and \cite{zhang2026} 
for the equilateral case. In fact, 
\cite[Theorem 12.5, p.~134]{Soifer2009-book} shows that for every positive integer
$k$ there is an isosceles triangle $T$ that can be dissected into $km^2$
congruent triangles with a suitable $m$. Just dividing an isosceles triangle
in half is enough for our main theorem in this paper, since $2$ is not a square.
However, tilings of an isosceles triangle are used many times as components of
tilings of other triangles, so for that reason we also need to consider
tilings of isosceles triangles with incommensurable angles,
such as the one shown in Figure~\ref{figure:isoscelescorrect}.

Figure~\ref{figure:tiling27} is not necessary for our main theorem; we include it
anyway for aesthetic and historical appreciation. The second tiling in that
figure was discovered by Major Percy Alexander MacMahon (1854--1929)
\cite{macmahon} in 1921. It is one of a family of $3k^2$ tilings (the case
$k=3$). The next case is a 48-tiling, made from six hexagons (each containing
6 tiles) bordered by 4 tiles on each of 3 sides. In general one can arrange
$1+2+\ldots+(k-1)$ hexagons in bowling-pin fashion, and add $k$ tiles 
on each of three sides, for a total number of tiles of $6(1+2+\ldots+(k-1))+3k = 3(k-1)k + 3k = 3k^2$. Figure~\ref{figure:hexagonaltilings} shows more
members of this family, which we call the ``hexagonal tilings.''%
\footnote{In January, 2012, the first author bought a puzzle at the exhibition at the AMS meeting, which contained the 
second tiling in Figure~\ref{figure:tiling27} as part of a tiling of a larger hexagon. This was a surprise, as until then,
the first author thought he had discovered that tiling.}
\begin{figure}[ht]
\centering
\includegraphics[width=0.4\linewidth]{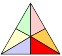} \quad
\raisebox{-2mm}{\includegraphics[width=0.4\linewidth]{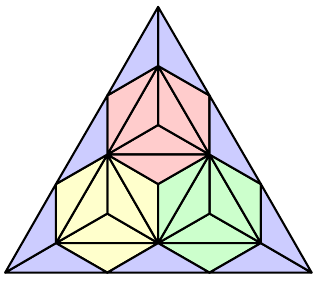}}
\caption{Some tilings with commensurable angles.}
\label{figure:tiling27}
\end{figure}

\begin{figure}[ht]
\centering
\includegraphics[width=0.4\linewidth]{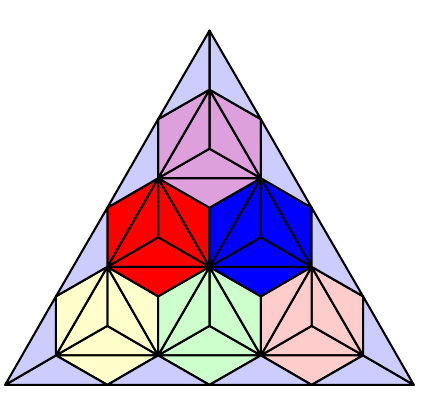}\quad
\includegraphics[width=0.4\linewidth]{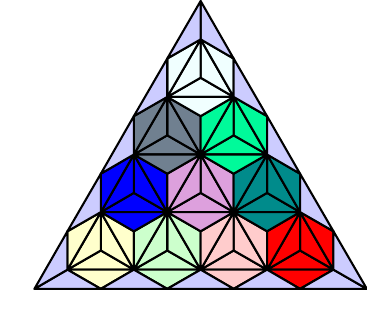}
\caption{$3k^2$ tilings with commensurable angles.} 
\label{figure:hexagonaltilings}
\end{figure}

Figure~\ref{figure:tiling1215} shows a 1215-tiling
of an equilateral triangle discovered in 2024 by
Bryce Herdt. Following the method
of \cite{laczkovich1995tilings} led to 
a tiling with 10935 tiles, just barely small enough
to draw, and reproduced in \cite{beeson-equilateral}.
Herdt reduced the number of tiles by
noticing a more efficient way to tile a 
parallelogram by cutting it into two parallelograms
tiled by triangles in different orientations. 
A construction in \cite{zhang2026} produces a family of tilings for each possible tile $(a,b,c)$, which rediscovers Herdt's tiling 
in the $(3,5,7)$ case.
We note that the tile, $(3,5,7)$,
has incommensurable angles.  

\begin{figure}
\centering
\includegraphics[width=0.5\linewidth]{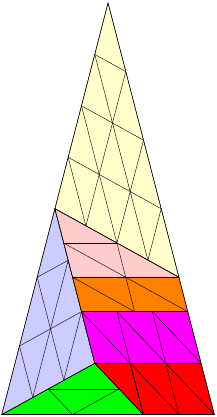}          
\caption{Vertex angle $\alpha$, base angles $\alpha+\beta$, $N=48$. The tile is $(2,3,4)$.}
\label{figure:isoscelescorrect}
\end{figure}

\begin{figure}[ht]
\centering
\includegraphics[width=0.85\linewidth]{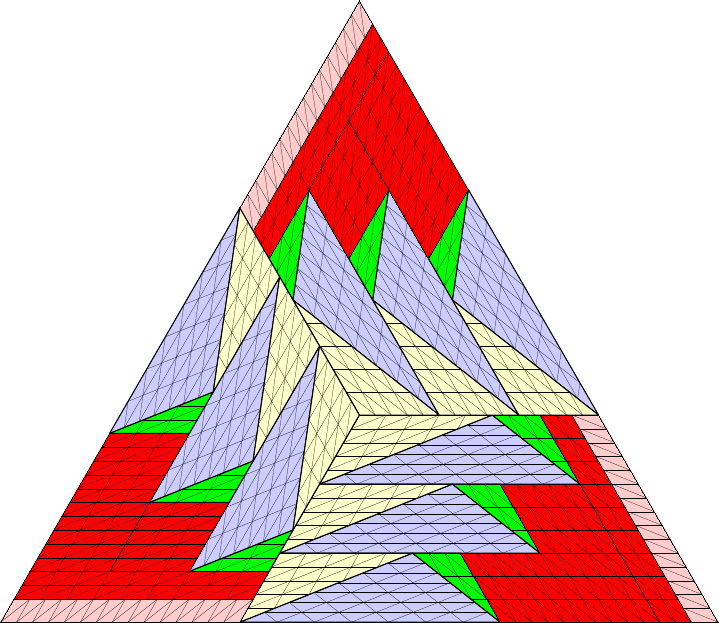}
\caption{A 1215-tiling of an equilateral triangle by $(3,5,7)$.} 
\label{figure:tiling1215}
\end{figure}

Tilings in case (2) of Theorem~\ref{thm:main}, in which the legs of a right triangle $T$ have
rational ratio, are constructed as follows. Let $N$ be a sum of two squares. Then
an appropriately-chosen right triangle can be $N$-tiled by right triangles
similar to $T$.  See Figure~\ref{figure:biquadratic}.
These tilings were first described by S.W. Golomb in \cite{golomb1964replicating}.

Since any prime congruent to $1 \pmod{4}$ is a sum of two squares, there are
infinitely many tilings in this family. These are called
{\em biquadratic tilings}.
\begin{figure}[ht]
\includegraphics[width=0.4\linewidth]{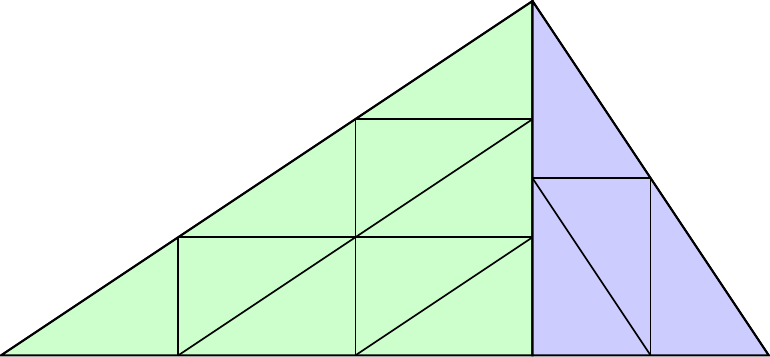}\quad
\includegraphics[width=0.4\linewidth]{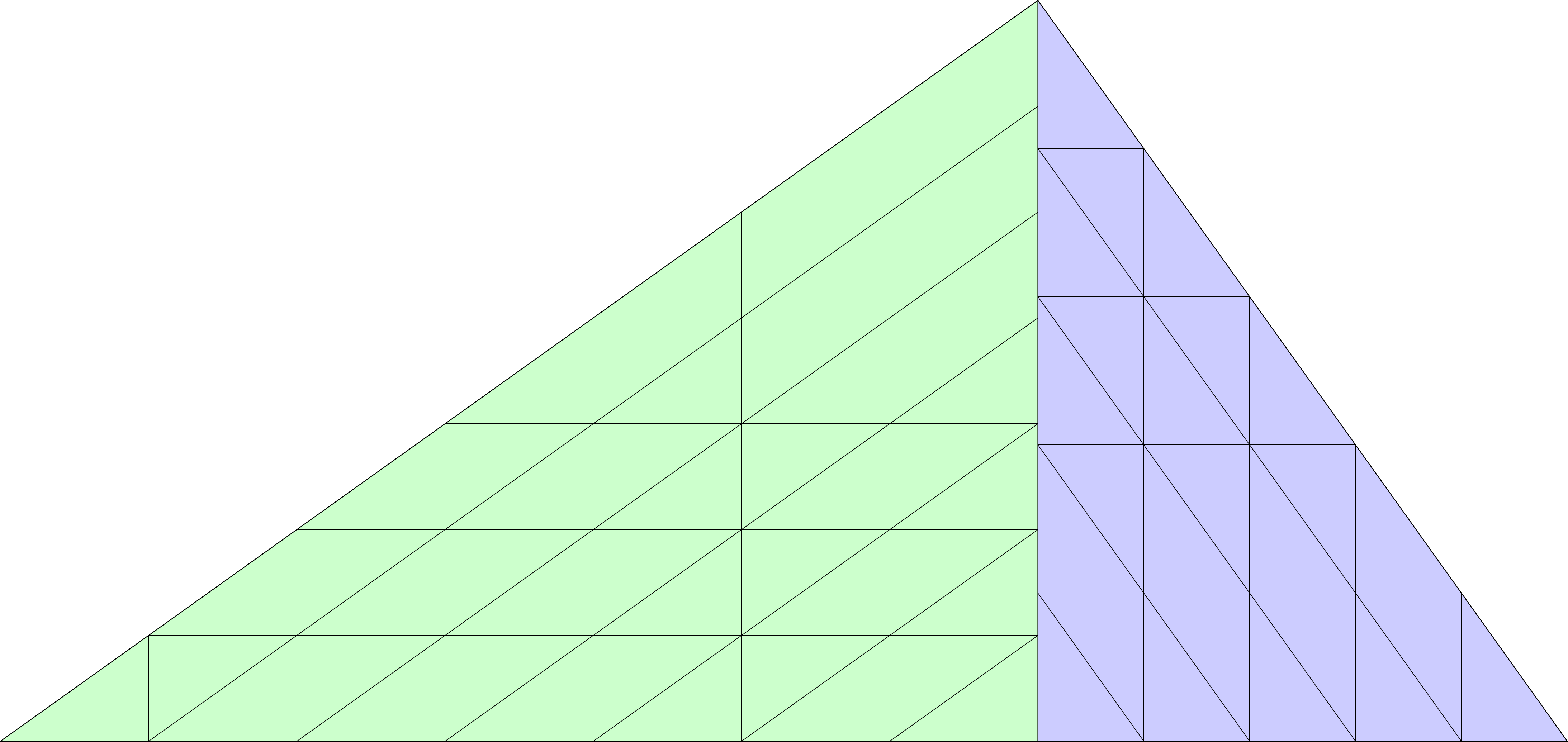}
\caption{Biquadratic tilings with $N = 13 = 3^2 + 2^2$ and $N=74 = 5^2 + 7^2$.}
\label{figure:biquadratic}
\end{figure}

Case (3) concerns tilings of the $(\pi/2, \pi/3, \pi/6)$ triangle.  See
Figure~\ref{figure:tiling3} for a tiling with $N=3$, which can be extended to tilings with $N=3k^2$ by subdivision as in reptiling.
\begin{figure}[ht]
\includegraphics[width=0.4\linewidth]{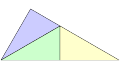}
\caption{A tiling of the 30-60-90 triangle.}
\label{figure:tiling3}
\end{figure}

That completes our samples of tilings of
triangles $T$ with commensurable angles.  

The next family, case~(4) of Theorem~\ref{thm:main}, 
contains tilings where $T$ has incommensurable angles
$(A, B, \pi/3)$, with $\sqrt{3} \tan(A/2) \in \Q$.
In the proof of Proposition~\ref{prop:60-angle-only-if}, it was shown that
if such a $T$ is tiled into $(\alpha,\beta,\gamma)$, then $\gamma = 2\pi/3$ and the angles of $T$ take one of the forms 
$$(\alpha, \beta + \pi/3, \pi/3) \text{ or }
(2\alpha, 2\beta, \pi/3).$$

We now  give examples of such tilings.  First we take up the
case $T = (\alpha, \beta + \pi/3, \pi/3)$.   To create such a tiling,
we start with a tiling of an equilateral triangle by $(\alpha,\beta,2\pi/3)$.
Then we paste a suitable quadratic tiling onto one side.  For example,
we start with  Herdt's 1215-tiling of an equilateral triangle by $(3,5,7)$.
One side of that equilateral triangle is 135, which luckily is a multiple of both 3 and 5, 
so we can tack on a quadratic 
tiling.  Depending which way we orient it, we get
a 3240-tiling or a 1944-tiling. The latter is shown
in Figure~\ref{figure:tiling1944}.  Note $\beta < \alpha$ in 
that tiling; in the 3240-tiling we have $\alpha < \beta$.
 \begin{figure}[ht]
\centering
\includegraphics[width=\linewidth]{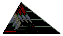}
\caption{A 1944-tiling by $(5,3,7)$. Angles of $T$  are $(\alpha,\beta +  \pi/ 3,  \pi/ 3)$.}
\label{figure:tiling1944}
\end{figure}

Next we take up the case $T = (2\alpha, 2\beta,\pi/3)$.  We 
constructed two examples.
\begin{itemize}
    \item a 7007-tiling following
    Figure~7 of \cite{laczkovich1995tilings}, and shown in Figure~\ref{figure:tiling7007};
    \item  a 3575-tiling following
    Figure~7 of \cite{zhang2026}, and shown in Figure~\ref{figure:tiling3575}.
\end{itemize}
The cited papers have different templates for tiling 
a triangle of this shape into similar triangles, and 
parallelograms. We show both examples because we found it 
surprising that  (a) there are different ways to proceed,
and (b) it makes such a difference to the final $N$.
To construct tilings (into {\em congruent} tiles),
one also has to use the method of Herdt described above, 
that is, dividing a parallelogram
into two parallelograms tiled with different orientations.
Otherwise one would have to use 25 times as many tiles, and the tilings would be too large to draw.

As remarked in Section~\ref{sec:uniqueness}, it is sometimes possible to tile
the same triangle $T$ with two different tiles.  We have illustrated this in
Figure~\ref{figure:tiling126720}, which is a tiling of a triangle similar to the
one in Figure~\ref{figure:tiling3575}, but with a different tile.
Figure~\ref{figure:tiling126720} does not show the individual tiles, because they
cannot be drawn at this scale; instead the colored areas represent quadratic
tilings or parallelogram tilings with the tiles similarly oriented.

\begin{figure}[ht]
\centering
\includegraphics[width=\linewidth]{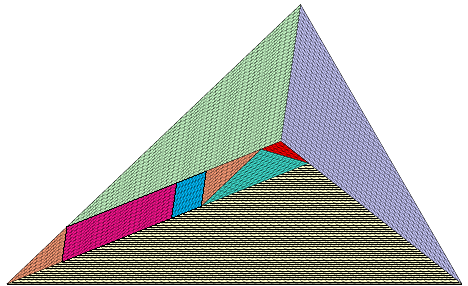}
\caption{A 7007-tiling by $(3,5,7)$. 
Angles of $T$ are $(2\alpha,2\beta,\pi/3)$.}
\label{figure:tiling7007}
\end{figure}

 \begin{figure}[ht]
\centering
\includegraphics[width=0.85\linewidth]{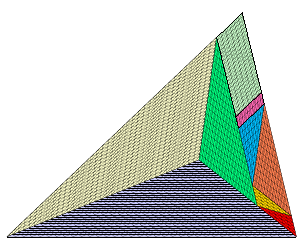}
\caption{A 3575-tiling by $(3,5,7)$. 
Angles of $T$ are $(2\alpha,2\beta,\pi/3)$.}
\label{figure:tiling3575}
\end{figure}

 \begin{figure}[ht]
\centering
\includegraphics[width=0.76\linewidth]{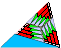}
\caption{A 126720-tiling by $(39,16,49)$. 
Angles of $T$ are the same as in Figure~\ref{figure:tiling3575}, but the tile is different.
Individual tiles are not shown here as there are too many of them.}
\label{figure:tiling126720}
\end{figure}

Next we take up case (5) of Theorem~\ref{thm:main}.
Here the triangle $T$ has angles $(\alpha,2\alpha,3\beta)$
and $\alpha+\beta = \pi/3$, so the tile
has $\gamma = 2\pi/3$.  We can make such a tiling
from any tiling of $(2\alpha, 2\beta, \pi/3)$ by tacking on another large
quadratic tiling to the right side, much as the 3240 tiling
was created from the equilateral 1215-tiling. If we start with 
the 7007-tiling, we end with a 15288-tiling, which seems
rather large. If instead we start with 
the 3575-tiling, we can draw a 6600-tiling, shown
in Figure~\ref{figure:tiling6600}.

 \begin{figure}[ht]
\centering
\includegraphics[width=0.8\linewidth]{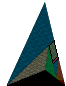}
\caption{Angles of $T$ are $(\alpha,2\alpha,3\beta)$. A 6600-tiling by $(5,3,7)$.}
\label{figure:tiling6600}
\end{figure}

Next we take up family~(6) of Theorem~\ref{thm:main}.
Here the triangle $T$ has angles $(\alpha,2\alpha,2\beta)$,
so $3\alpha+2\beta = \pi$.
An example, taken from \cite{beeson-triangletiling3}, is shown in Figure~\ref{figure:tiling77}.

Now consider the family (7) of Theorem~\ref{thm:main}.
Here $T$ has angles  $(2\alpha, \beta, \alpha + \beta)$.
 These tilings have been studied in \cite{beeson-triangletiling3}.  The history here 
is that the first author had proved there is no $7$-tiling, and was trying to prove
with the aid of a computer program that there is also no 28-tiling.  But in the 
process, he discovered that
 there {\em does} exist a 28-tiling, shown in Figure~\ref{figure:tiling28}.
  \begin{figure}[ht]
\centering
\includegraphics[width=0.5\linewidth]{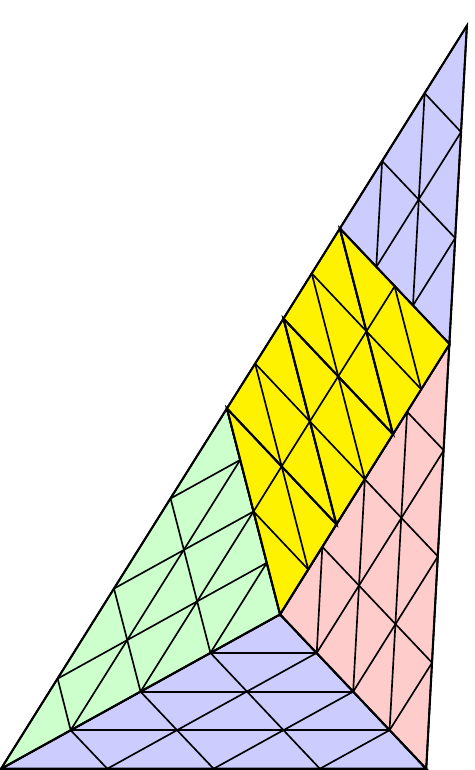}
\caption{A 77-tiling by $(2,3,4)$.}
\label{figure:tiling77}
\end{figure}
 \begin{figure}[ht]
\centering
\includegraphics[width=0.5\linewidth]{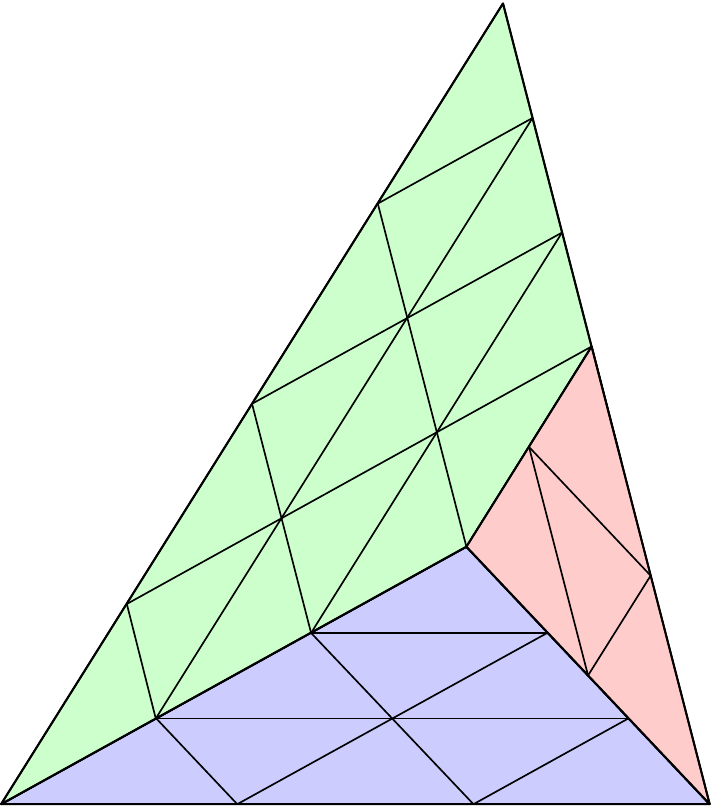}
\caption{A 28-tiling. Angles of $T$ are $(2\alpha,\beta,\alpha+\beta)$.}
\label{figure:tiling28}
\end{figure}

 The 28-tiling turned out to be the smallest member of a new 
 family of tilings.  These are the ``triquadratic tilings''; they exist when 
 the ``tiling equation'' 
 $$M^2 + N = 2K^2$$
 has a solution in integers $(K,M)$, such that 
 $K \mid N$, or equivalently $K \mid M^2$.  When we speak of a ``solution of the tiling equation'',
 we mean to include the divisibility conditions just mentioned.
 Each such solution determines the tile of the corresponding 
 triquadratic tiling: the tile must be similar to the triangle 
 with sides $a=M$, $c=K$, and $b = K-M^2/K$.  All three sides of
 that tile are integers, and the tile then satisfies the condition
 $3\alpha+2\beta = \pi$.
 
 Two other members of this family are shown in Figure~\ref{figure:tiling153}.
  \begin{figure}[ht]
\centering
\includegraphics[width=0.3\linewidth]{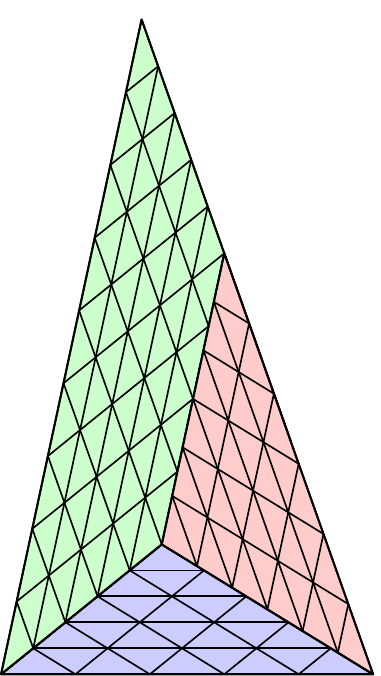}\quad
\includegraphics[width=0.6\linewidth]{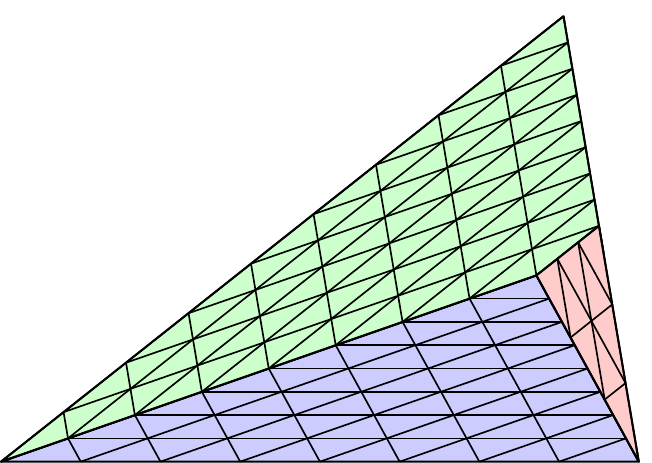}
\caption{A 126-tiling and a 153-tiling.}
\label{figure:tiling153}
\end{figure}
All the tilings in this family of triquadratic tilings belong to Group~1  and tile
a triangle $T$ with angles $(2\alpha,\beta, \alpha +\beta)$, as can be seen in the figures.
\smallskip

Now we come to the final Case~(8) of Theorem~\ref{thm:main}.  We follow the 
plan of Figure~11 in \cite{zhang2026}.%
\footnote{
The construction suggested by Figure~7 of 
\cite{laczkovich1995tilings} leads to a tiling with a quarter of a million tiles.}
Here
the triangle $T$ has angles $(\alpha,2\beta, 2\alpha+\beta)$,
and the tile has $\gamma = 2\pi/3$. 
See Figure~\ref{figure:tiling7128}. 

 \begin{figure}[ht]
\centering
\includegraphics[width=\linewidth]{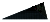}
\caption{A 7128-tiling. Angles are $(\alpha,2\beta,2\alpha + \beta)$. 
 The tile is $(3,5,7)$.}
\label{figure:tiling7128}
\end{figure}

\begin{figure}[ht]
\centering
\includegraphics[width=0.45\linewidth]{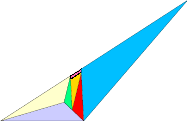}
\includegraphics[width=0.45\linewidth]{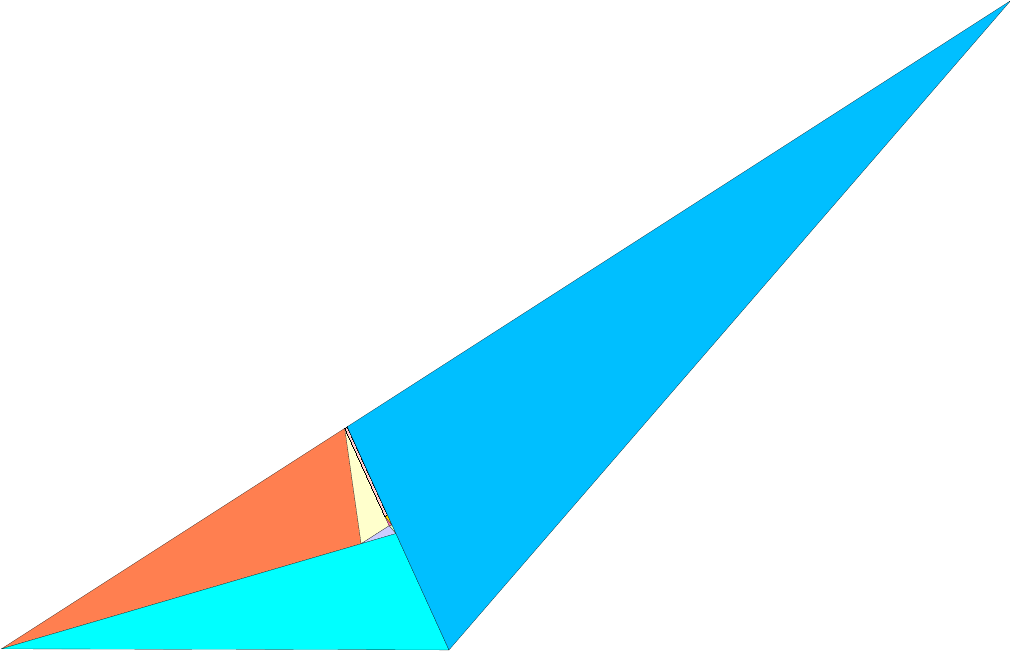}
\caption{The same triangle tiled by two different tiles. See
Example~\ref{example:ex2}.}
\label{figure:example2}
\end{figure}


\begin{figure}[ht]
\centering
\includegraphics[width=0.45\linewidth]{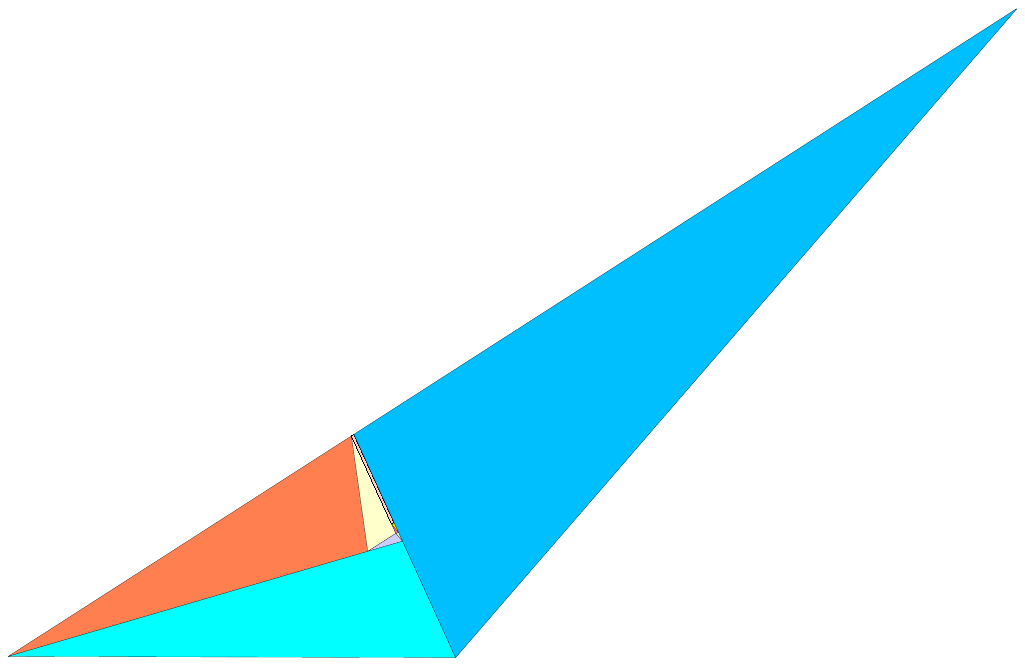}
\caption{Details from the second tiling in Figure~\ref{figure:example2}.}
\label{figure:Prop18Details}
\end{figure}

\FloatBarrier


\begingroup
\raggedright

\endgroup

\end{document}